\patchcmd{\algocf@Here}{\hsize}{\linewidth}{}{}
\theoremstyle{plain}
\newtheorem{thm}{Theorem}[section]
\newtheorem{lemma}[thm]{Lemma}
\theoremstyle{definition}
\newtheorem{definition}[thm]{Definition}
\newtheorem{example}[thm]{Example}
\newtheorem{assumption}[thm]{Assumption}
\theoremstyle{remark}
\newcommand{\htilde}{\mathscr H}  
\newcommand{\htildefin}{\widetilde H} 
\newcommand{\hmix}{\widetilde H_\mathrm{mix}}
\newcommand{\honeoinf}{\mathscr H^1_0} 
\newcommand{\honeofin}{\widetilde H^1_0}   
\newcommand{\Qad}{Q_{\mathrm{ad}}} 
\newcommand{\An}{\mathscr A_n} 
\newcommand{\R}{\mathbb{R}} 
\newcommand{\Natural}{\mathbb{N}} 
\newcommand{\Span}{\mathrm{span}}
\newcommand{\prob}{\mathbb P}  
\newcommand{\proj}{\mathbf P}  
\newcommand{\Exp}{\mathbb E} 
\newcommand{\qhatn}{\hat q_n^*} 
\newcommand{\qmin}{q_{\mathrm{min}}} 
\newcommand{\qmax}{q_{\mathrm{max}}} 
\newcommand{\uhat}{\hat u} 
\begin{document}
\title{Gradient-Based Estimation of Uncertain Parameters for Elliptic Partial Differential Equations}
\author{Jeff Borggaard, Hans-Werner van Wyk}
\maketitle

\abstract{This paper addresses the estimation of uncertain distributed diffusion coefficients in elliptic systems based on noisy measurements of the model output. We formulate the parameter identification problem as an infinite dimensional constrained optimization problem for which we establish existence of minimizers as well as first order necessary conditions. A spectral approximation of the uncertain observations allows us to estimate the infinite dimensional problem by a smooth, albeit high dimensional, deterministic optimization problem, the so-called finite noise problem in the space of functions with bounded mixed derivatives. We prove convergence of finite noise minimizers to the appropriate infinite dimensional ones, and devise a stochastic augmented Lagrangian method for locating these numerically. Lastly, we illustrate our method with three numerical examples.}

\section{Introduction}
This paper discusses a variational approach to estimating the parameter $q$ in the elliptic system
\begin{equation}
-\nabla\cdot(q \nabla u) = f\  \hbox{on\ } D, \hspace{.1in} u = 0\ \hbox{on } \partial D, \label{eqn: model}
\end{equation}
based on noisy measurements $\hat u$ of $u$, when $q$ is modeled as a spatially varying random field. Equation \eqref{eqn: model}, defined over the physical domain $D\subset \mathbb R^d$, may describe the flow of fluid through a medium with permeability coefficient $q$ or heat conduction across a material with conductivity $q$. Variational formulations in which the identification problem is posed as a constrained optimization, have been studied extensively for the case when $q$ is deterministic \cite{Banks1989, Chan2003, Chan2004, Ito1990a, Ito1991}. Aleatoric uncertainty arising in these problems from imprecise, noisy measurements, variability in operating conditions, or unresolved scales are traditionally modeled as perturbations and addressed by means of regularization techniques. These approximate the original inverse problem by one in which the parameter depends continuously on the data $\hat u$, thus ensuring an estimation error commensurate with the noise level. However, when a statistical model for uncertainty in the dynamical system is available, it is desirable to incorporate this information more directly into the estimation framework to obtain an approximation not only of $q$ itself but also of its probability distribution.

\vspace{1em}

Bayesian methods provide a sampling-based approach to statistical parameter identification problems with random observations $\hat u$. By relating the observation noise in $\hat u$ to the uncertainty associated with the estimated parameter via Bayes' Theorem \cite{Stuart2010,Tarantola2005}, these methods allow us to sample directly from the joint distribution of $q$ at a given set of spatial points, through repeated evaluation of the deterministic forward model. The convergence of numerical implementations of Bayesian methods, most notably Markov chain Monte Carlo schemes, depends predominantly on the statistical complexity of the input $q$ and the measured output $\hat u$ and is often difficult to assess. In addition, the computational cost of evaluating the forward model can possibly severely limit their efficiency. 

\vspace{1em}
 
There has also been a continued interest in adapting variational methods  to estimate parameter uncertainty \cite{Banks2001, Rockafellar2007,Sandu2010, Zabaras2008}. Benefits include a well-established infrastructure of existing theory and algorithms, the possibility of incorporating multiple statistical influences, arising from uncertainty in boundary conditions or source terms for instance, and clearly defined convergence criteria. Let $(\Omega, \mathcal F_{\hat u}, d\omega)$ be a complete probability space and suppose we have a statistical model of the measured data $\hat u$ in the form of a random field $\hat u=\hat u(x,\omega)$ contained in the tensor product $\honeoinf(D):=H^1_0(D)\otimes L^2(\Omega)$. A least squares formulation of the parameter identification problem in \eqref{eqn: model}, when $q(x,\omega)$ is a random field, may take the form
\begin{equation}\label{eqn: infinite L2 problem}\tag{$P$}
\begin{split}
& \min_{(q,u)\in \htilde\times \honeoinf} J(q,u) :=\frac{1}{2}\|u-\hat u\|_{\honeoinf}^2+ \frac{\beta}{2} \|q\|_{\htilde}^2\\
s.t.\ \ & q \in \Qad, \ \ \ e(q,u) = 0,
\end{split}
\end{equation}
where the regularization term with $\beta > 0$ is added to ensure continuous dependence of the minimizer on the data $\hat u$. Here $\htilde := H(D)\otimes L^2(\Omega)$, where $H(D)$ is any Hilbert space that imbeds continuously in $L^\infty(D)$, which may be taken to be the Sobolev space $H^1(D)$ when $d=1$ or $H^2(D)$ when $d=2,3$ (see \cite{Ito1990a}). The feasible set $\Qad$ is given by
 \[\displaystyle \Qad = \{q\in \htilde(D): 0<\qmin\leq q(x,\omega) \hbox{\ a.s. on } D\times \Omega,\ \|q(\cdot,\omega)\|_H\leq \qmax \ \hbox{a.s. on } \Omega\},\]
while the stochastic equality constraint $e(q,u) = 0$ represents Equation \eqref{eqn: model}. It can also be written in its weak form as a functional equation $\tilde e(q,u) = 0$ in $\htilde^{-1}$, where
\begin{equation}\label{eqn: stochastic model}
\begin{split}
\langle \tilde e(q,u),v\rangle_{\htilde^{-1},\honeoinf} := &\int_\Omega \int_D q(x,\omega) \nabla u(x,\omega)\cdot \nabla v(x,\omega)\; dx\; d\omega -  \int_\Omega\int_D f(x)\phi(x,\omega)\; d\omega
\end{split}
\end{equation}
for all $v\in \honeoinf(D)$ \cite{Babuska2005}. For our purposes, it is useful to consider the equivalent functional equation $e(q,u) = 0$ in $\honeoinf$, where $e(q,u) := (-\Delta)^{-1}\tilde e(q,u)$ in the weak sense. Although these two forms of equality constraint are equivalent, pre-multiplication by the inverse Laplace operator adds a degree of preconditioning to the problem, as observed in \cite{Ito1990a}. We assume for the sake of simplicity that $f\in L^2(D)$ is deterministic.

\vspace{1em}

This formulation poses a number of theoretical, as well as computational challenges. The lack of smoothness of the random field $q=q(x,\omega)$ in its stochastic component $\omega$ limits the regularity of the equality constraint as a function of $q$, making it difficult to use theory analogous to the deterministic case in establishing first order necessary optimality conditions, as will be shown in Section \ref{section: infinite problem}. The most significant hurdle from a computational point of view is the need to approximate high dimensional integrals, both when evaluating the cost functional $J$ and when dealing with the equality constraint \eqref{eqn: stochastic model}. Monte Carlo type schemes seem inefficient, especially when compared with Bayesian methods. The recent success of Stochastic Galerkin methods \cite{Babuska2005, Xiu2002} and stochastic collocation-based approaches \cite{Babuska2005, Nobile2008} in efficiently estimating high dimensional integrals related to stochastic forward problems has, however, motivated investigations into their potential use in associated inverse and design problems.

\vspace{1em}

In forward simulations, collocation methods make use of spectral expansions, such as the Karhunen-Lo\`eve (KL) series, to approximate the known input random field $q$ by a smooth function of finitely many random variables, a so-called finite noise approximation. Standard PDE regularity theory \cite{Babuska2005} then ensures that the corresponding model output $u$ depends smoothly (even analytically) on these random variables. This facilitates the use of high-dimensional quadrature techniques, based on sparse grid interpolation of high order global polynomials. Inverse problems on the other hand are generally ill-posed and consequently any smoothness of a finite noise approximation of the given measured data $\hat u$ does not necessarily carry over to the unknown parameter $q$. In variational formulations, explicit assumptions should therefore be made on the smoothness of finite noise approximations of $q$ to facilitate efficient implementation, while also accurately estimating problem \eqref{eqn: infinite L2 problem}.

\vspace{1em}

We approximate \eqref{eqn: infinite L2 problem} in the space of functions with bounded mixed derivatives. Posing the finite noise minimization problem \eqref{eqn: finite noise problem} in this space not only guarantees that the equality constraint $e(q,u)$ is twice Fr\'echet differentiable in $q$ (see Section \ref{section: finite noise problem}), but also allows for the use of numerical discretization schemes based on sparse grid hierarchical finite elements, approximations known not only for their amenability to adaptive refinement, but also for their effectiveness in mitigating the curse of dimensionality \cite{Bungartz2004}. The authors in
\cite{Zabaras2008} demonstrate the use of piecewise linear hierarchical finite elements to approximate the finite noise design parameter in a least squares formulation of a heat flux control problem subject to system uncertainty, which is solved  numerically through gradient-based methods. This paper aims to provide a rigorous framework within which to analyze and numerically approximate problems of the form \eqref{eqn: infinite L2 problem}. 

\vspace{1em}

In Section \ref{section: infinite problem}, we establish existence and first order necessary optimality conditions for the infinite dimensional problem \eqref{eqn: infinite L2 problem}. In Section \ref{section: finite noise approximation} we make use of standard regularization theory to analytically justify the approximation of \eqref{eqn: infinite L2 problem} by the finite noise problem \eqref{eqn: finite noise problem}. We discuss existence and first order necessary optimality conditions for \eqref{eqn: finite noise problem} in Section \ref{section: finite noise problem} and formulate an augmented Lagrangian algorithm for finding its solution in Section \ref{section:augmented_lagrangian_alg}. Section \ref{section: discretization} covers the numerical approximation of $q$ and $u$, as well as the discretization of augmented Lagrangian optimization problem. Finally, we illustrate the application of our method on three numerical examples. 

\section{The Infinite Dimensional Problem}\label{section: infinite problem}
In order to accommodate the lack of smoothness of $q$ as a function of $\omega$ in our analysis, we impose inequality constraints uniformly in random space. Any function $q$ in the feasible set $\Qad$, satisfies the norm bound $\|q(\cdot, \omega)\|_H\leq \qmax$ uniformly on $\Omega$, which by the continuous imbedding of $H(D)$ into $L^\infty(D)$, implies $0<\qmin\leq q(x,\omega)\leq \qmax$ for all $(x,\omega)\in D\times \Omega$. This assumption, while ruling out unbounded processes, nevertheless reflects actual physical constraints. The uniform coercivity condition $0<\qmin\leq q(x,\omega)$, guarantees that for each $q\in \Qad$, there exists a unique solution $u=u(q)\in \honeoinf(D)$ to the weak form \eqref{eqn: stochastic model} of the equality constraint $e(q,u)=0$ \cite{Babuska2004} satisfying the bound
\begin{equation}\label{eqn: solution bound}\|u\|_{\honeoinf}^2 \leq \frac{C_D}{\qmin}\|f\|_{L^2}.\end{equation}
Hence all $q\in \Qad$ and their respective model outputs $u(q)$ have statistical moments of all orders. 
\subsection{Existence of Minimizers}
An explicit stability estimate of $u(q)$ in terms of the $L^p(D\times \Omega)$ norm of $q$ was given in \cite{Babuska2004, Babuska2005} for $2< p \leq \infty$. These norms, besides not having Hilbert space structure, give rise to topologies that are too weak for our purposes. The following lemmas establish the weak compactness of the feasible set, continuity of the solution mapping $q\mapsto u(q)$ restricted to $\Qad$, as well as the weak closedness of its graph in the stronger $\htilde$ norm and will be used to prove the existence of solutions to \eqref{eqn: infinite L2 problem}.
\begin{lemma}\label{lemma: Q convex and weakly compact}
The set $\Qad$ is closed, convex, and hence weakly compact in $\htilde$. 
\end{lemma}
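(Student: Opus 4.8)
The plan is to establish the three listed properties in turn---convexity, boundedness, and strong closedness---and then to deduce weak compactness from the reflexivity of the Hilbert space $\htilde$.

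\emph{Convexity} should follow directly from the definition of $\Qad$: given $q_0,q_1\in\Qad$ and $\lambda\in[0,1]$, the lower bound $\qmin\le\lambda q_1+(1-\lambda)q_0$ holds pointwise a.s.\ on $D\times\Omega$, while the triangle inequality in $H(D)$ gives, for a.e.\ $\omega$,
\[
\|\lambda q_1(\cdot,\omega)+(1-\lambda)q_0(\cdot,\omega)\|_H\le\lambda\|q_1(\cdot,\omega)\|_H+(1-\lambda)\|q_0(\cdot,\omega)\|_H\le\qmax .
\]
\emph{Boundedness} is equally quick: since $d\omega$ is a probability measure, every $q\in\Qad$ satisfies $\|q\|_{\htilde}^2=\int_\Omega\|q(\cdot,\omega)\|_H^2\,d\omega\le\qmax^2$, so $\Qad$ is contained in the closed ball of radius $\qmax$ in $\htilde$.

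For \emph{strong closedness}, I would take a sequence $q_n\to q$ in $\htilde$ with $q_n\in\Qad$ and use the identification $\htilde\cong L^2(\Omega;H(D))$: strong convergence then yields a subsequence $(q_{n_k})$ with $\|q_{n_k}(\cdot,\omega)-q(\cdot,\omega)\|_H\to 0$ for a.e.\ $\omega$. For such $\omega$, the continuous embedding $H(D)\hookrightarrow L^\infty(D)$ upgrades this to uniform convergence on $D$, so passing to the limit in $q_{n_k}(x,\omega)\ge\qmin$ gives $q(x,\omega)\ge\qmin$ for a.e.\ $x$, and hence (since this holds for a.e.\ $\omega$) a.s.\ on $D\times\Omega$; likewise $\|q(\cdot,\omega)\|_H=\lim_k\|q_{n_k}(\cdot,\omega)\|_H\le\qmax$ for a.e.\ $\omega$. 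Thus $q\in\Qad$. Finally, $\htilde$ is a Hilbert space and therefore reflexive, so by Mazur's lemma the strongly closed convex set $\Qad$ is weakly closed, and a bounded, weakly closed subset of a reflexive space is weakly compact; this is the assertion.

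The one genuinely delicate point I expect is the closedness step: strong $\htilde$-convergence of $q_n$ gives a.s.\ convergence of the fibers $q_n(\cdot,\omega)$ in $H(D)$ only along a subsequence, not for the full sequence. This is harmless here precisely because the constraints defining $\Qad$ are almost-sure conditions: a property holding a.e.\ for the a.e.-pointwise subsequential limit, which coincides with $q$ a.e., holds a.e.\ for $q$ itself. One could bypass subsequences altogether by observing that both constraints have the form ``$\Phi(q(\cdot,\omega))\le\qmax$ a.s.''\ (resp.\ ``$\ge\qmin$ a.s.'') for convex, weakly semicontinuous functionals of the fiber and invoking weak lower semicontinuity, but the subsequence argument is the most economical route.
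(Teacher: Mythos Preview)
Your proposal is correct and follows essentially the same route as the paper: convexity is immediate, closedness is obtained by extracting a subsequence converging a.e.\ in $\omega$ in the $H(D)$-norm and then using the embedding $H(D)\hookrightarrow L^\infty(D)$ to pass to the limit in both constraints, and weak compactness follows from closed, convex, and bounded in a Hilbert space. If anything, you are more explicit than the paper, which omits the boundedness observation and the Mazur/reflexivity step entirely.
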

\begin{proof}
Recall that 
\[\Qad=\{q\in \htilde: q(x,\omega)\geq \qmin,\ \hbox{ a.s. on $D\times \Omega$}, \ \|q(.,\omega)\|_{H}\leq \qmax \ \hbox{\ a.s. on $\Omega$}\}.
\]
Convexity is easily verified. To show that $\Qad$ is closed, let $\{q^n\}\subset \Qad$ and $q\in\htilde$ be such that  
\[\|q^n-q\|_{\htilde}^2= \int_\Omega \|q^n(.,\omega)- q(.,\omega)\|_H^2d\omega\rightarrow 0 \hspace{1cm} \hbox{as $n\rightarrow \infty$}.\] 
Since convergence in $L^2(\Omega,d\omega)$ implies pointwise almost sure convergence of a subsequence on $\Omega$, it follows that
\[\|q^{n_k}(.,\omega)-q(., \omega)\|_H\rightarrow 0 \ \ \ \hbox{a.s. on $\Omega$}\]
for some subsequence $\{q^{n_k}\}\subset \Qad$. Additionally, $\|q^{n_k}(.,\omega)\|_{H}\leq \qmax$ a.s.\;on $\Omega$ for $k\in \Natural$ and therefore $q$ also satisfies this constraint. Finally, $H(D)$ imbeds continuously in $L^\infty(D)$, which implies that the subsequence $\{q^{n_k}\}$ in fact converges to $q$ pointwise a.s.\;on $D\times \Omega$, ensuring that $q$ also satisfies pointwise constraint $q(x,\omega)\geq \qmin$ a.s.\;on $D\times \Omega$. 
\end{proof}

\begin{lemma}\label{lemma: u is continuous in q}
The mapping $u:q\in \Qad\mapsto u(q)\in \htilde^1_0$ is continuous. 
\end{lemma}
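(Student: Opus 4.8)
The plan is to show that if $q^n \to q$ in $\htilde$ with all $q^n, q \in \Qad$, then $u(q^n) \to u(q)$ in $\honeoinf$. First I would subtract the weak formulations \eqref{eqn: stochastic model} for $u^n := u(q^n)$ and $u := u(q)$ tested against an arbitrary $v \in \honeoinf$, obtaining
\begin{equation*}
\int_\Omega\int_D q^n \nabla(u^n - u)\cdot\nabla v\, dx\, d\omega = -\int_\Omega\int_D (q^n - q)\nabla u\cdot\nabla v\, dx\, d\omega.
\end{equation*}
Taking $v = u^n - u$ and using the uniform lower bound $q^n \geq \qmin$ on the left gives
\begin{equation*}
\qmin\|u^n - u\|_{\honeoinf}^2 \leq \left|\int_\Omega\int_D (q^n - q)\nabla u\cdot\nabla(u^n-u)\, dx\, d\omega\right|.
\end{equation*}
(Here I am using that $\|\cdot\|_{\honeoinf}$ is equivalent to the seminorm $\|\nabla\cdot\|_{L^2(D\times\Omega)}$ via Poincar\'e.)

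Next I would bound the right-hand side. Since $H(D)$ imbeds continuously in $L^\infty(D)$, we have $\|q^n(\cdot,\omega) - q(\cdot,\omega)\|_{L^\infty(D)} \leq C\|q^n(\cdot,\omega) - q(\cdot,\omega)\|_H$ pointwise a.s.\ in $\omega$, so pulling the $L^\infty$ norm in $x$ out of the inner integral and then applying Cauchy--Schwarz in $\omega$,
\begin{equation*}
\left|\int_\Omega\int_D (q^n-q)\nabla u\cdot\nabla(u^n-u)\right| \leq C\int_\Omega \|q^n(\cdot,\omega)-q(\cdot,\omega)\|_H\, \|\nabla u(\cdot,\omega)\|_{L^2(D)}\|\nabla(u^n-u)(\cdot,\omega)\|_{L^2(D)}\, d\omega,
\end{equation*}
which is at most $C\|q^n - q\|_{L^\infty(\Omega;H)}\, \|u\|_{\honeoinf}\|u^n - u\|_{\honeoinf}$, or more carefully $C\,\bigl(\sup_\omega\|q^n(\cdot,\omega)-q(\cdot,\omega)\|_H\bigr)\|u\|_{\honeoinf}\|u^n-u\|_{\honeoinf}$. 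Dividing through by $\|u^n-u\|_{\honeoinf}$ and using the a priori bound \eqref{eqn: solution bound} on $\|u\|_{\honeoinf}$ yields
\begin{equation*}
\|u^n - u\|_{\honeoinf} \leq \frac{C}{\qmin}\Bigl(\sup_{\omega}\|q^n(\cdot,\omega)-q(\cdot,\omega)\|_H\Bigr)\|u\|_{\honeoinf}.
\end{equation*}

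The main obstacle is that $\htilde$-convergence $q^n\to q$ only gives $\|q^n-q\|_{L^2(\Omega;H)}\to 0$, not the uniform bound $\sup_\omega\|q^n(\cdot,\omega)-q(\cdot,\omega)\|_H \to 0$ that the estimate above seems to want. I would resolve this by a subsequence-and-contradiction argument rather than a direct estimate: suppose $u(q^n)\not\to u(q)$; pass to a subsequence with $\|u(q^{n_k})-u(q)\|_{\honeoinf}\geq \delta > 0$; then, as in the proof of Lemma \ref{lemma: Q convex and weakly compact}, extract a further subsequence along which $q^{n_k}(\cdot,\omega)\to q(\cdot,\omega)$ in $H$ for a.e.\ $\omega$, hence (by the $L^\infty$ imbedding) $\nabla q^{n_k}\to\nabla q$ pointwise a.e.\ on $D\times\Omega$. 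The inner product $\int_D (q^{n_k}-q)\nabla u\cdot\nabla w\,dx$ then tends to $0$ for a.e.\ $\omega$, and is dominated by $2\qmax\|\nabla u(\cdot,\omega)\|_{L^2}\|\nabla w(\cdot,\omega)\|_{L^2}\in L^1(\Omega)$ (using the $\Qad$ bound $\|q^n(\cdot,\omega)\|_H\leq\qmax$ and Cauchy--Schwarz), so dominated convergence gives the full double integral $\to 0$; combined with the coercivity inequality above with $w = u^{n_k}-u$, this forces $\|u(q^{n_k})-u(q)\|_{\honeoinf}\to 0$, contradicting the choice of $\delta$. This establishes continuity.
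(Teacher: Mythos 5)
Your proposal is correct in substance, but it takes a genuinely different route from the paper. The paper extracts, from $\htilde$-convergence, a subsequence with $\|q^{n_k}(\cdot,\omega)-q(\cdot,\omega)\|_H\to 0$ a.s.\ on $\Omega$ and then simply invokes the Lipschitz stability estimate cited from Babu\v{s}ka et al., $\|u(q^{n_k})-u(q)\|_{\honeoinf}\leq (C_D\|f\|_{L^2}/\qmin^2)\,\|q^{n_k}-q\|_{L^\infty(\Omega;L^\infty(D))}$, before upgrading to the full sequence via the subsequence-of-subsequences principle (the same device as your contradiction argument). You instead derive the needed stability directly: subtract the two weak formulations, test with $u^{n}-u$, use $\qmin$-coercivity, and kill the remaining term $\int_\Omega\int_D(q^{n_k}-q)\nabla u\cdot\nabla(u^{n_k}-u)$ by dominated convergence along the a.s.-convergent subsequence --- which is exactly the technique the paper itself uses later in the proof of Lemma \ref{lemma: graph of u is weakly closed}. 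Your version is more self-contained (no appeal to the cited bound) and sidesteps the somewhat delicate passage from a.s.\ convergence to smallness of the $L^\infty(\Omega;L^\infty(D))$ norm on which the paper's one-line estimate leans. One point to tighten: in your dominated-convergence step the factor $w=u^{n_k}-u$ depends on $k$, so the proposed dominant $2\qmax\|\nabla u(\cdot,\omega)\|_{L^2(D)}\|\nabla w(\cdot,\omega)\|_{L^2(D)}$ is not a single $L^1(\Omega)$ majorant. Since $q^{n_k}(\cdot,\omega)\geq\qmin$ a.s., the pathwise Lax--Milgram bound gives $\|\nabla(u^{n_k}-u)(\cdot,\omega)\|_{L^2(D)}\leq 2C_D\|f\|_{L^2}/\qmin$ for a.e.\ $\omega$ and all $k$ (and likewise $\|\nabla u(\cdot,\omega)\|_{L^2(D)}\leq C_D\|f\|_{L^2}/\qmin$), while $\|(q^{n_k}-q)(\cdot,\omega)\|_{L^\infty(D)}\leq 2C\qmax$ by the imbedding $H(D)\hookrightarrow L^\infty(D)$; this yields a constant, hence $k$-independent and integrable, dominant, and the same pathwise bounds show the inner integral tends to $0$ for a.e.\ $\omega$. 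With that one-line adjustment your argument is complete.
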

\begin{proof}
Suppose $q^n\rightarrow q$ in $\Qad$. As in the proof of the previous lemma, there exists a subsequence $q^{n_k}\rightarrow q$ pointwise a.s.\;on $D\times \Omega$. The upper bound on the function $u$ established in \cite[p.~1261]{Babuska2005} ensures that 
\[\|u(q^{n_k})-u(q)\|_{\honeoinf}\leq \left(\frac{C_D \|f\|_{L^2}}{\qmin^2}\right) \|q^{n_k}-q\|_{L^\infty(\Omega; L^\infty(D))}\rightarrow 0 \ \ \ \hbox{as $n\rightarrow \infty$},\]
where $C_D$ is the constant appearing in the Poincar\'e inequality on $D$. 
Furthermore, since any subsequence of $u(q^n)$ has a subsequence converging to $u(q)$, it follow that in fact $u(q^n)\rightarrow u(q)$. 
\end{proof}

\begin{lemma}\label{lemma: graph of u is weakly closed}
The graph $\{(q,u)\in \htilde\times \honeoinf: q\in \Qad, \ u=u(q)\}$ of $u$ is weakly closed. 
\end{lemma}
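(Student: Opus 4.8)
The plan is to verify weak closedness directly. Let $(q^n,u(q^n))$ be a sequence in the graph with $q^n\rightharpoonup q$ in $\htilde$ and $u(q^n)\rightharpoonup w$ in $\honeoinf$; I must show that $q\in\Qad$ and $w=u(q)$. The first claim is immediate from Lemma~\ref{lemma: Q convex and weakly compact}: a convex, strongly closed subset of a Hilbert space is weakly closed, so $q\in\Qad$. I would also record at the outset that, by the fibrewise coercivity estimate that yields \eqref{eqn: solution bound} applied to the deterministic problem with coefficient $q^n(\cdot,\omega)\ge\qmin$, the family $\{u(q^n)\}$ is bounded in $L^\infty(\Omega;H^1_0(D))$; hence $w\in L^\infty(\Omega;H^1_0(D))$, which keeps the integrability bookkeeping below clean.

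The substance is $w=u(q)$, which I would obtain by passing to the limit in the weak form of the equality constraint,
\[
\int_\Omega\int_D q^n\,\nabla u(q^n)\cdot\nabla v\;dx\,d\omega=\int_\Omega\int_D f\,v\;dx\,d\omega,\qquad v\in\honeoinf,
\]
and then invoking uniqueness of the solution of $e(q,\cdot)=0$ for $q\in\Qad$ to identify the limit as $u(q)$. The right-hand side converges to $\int_\Omega\int_D fw$ because $u(q^n)\rightharpoonup w$ in $\honeoinf$ and $v\mapsto\int_\Omega\int_D fv$ is bounded there. For the bilinear term I would split
\[
q^n\nabla u(q^n)\cdot\nabla v-q\,\nabla w\cdot\nabla v=q^n\,\nabla\big(u(q^n)-w\big)\cdot\nabla v+(q^n-q)\,\nabla w\cdot\nabla v .
\]
The last summand vanishes in the limit: since $w\in L^\infty(\Omega;H^1_0(D))$ and $H(D)\hookrightarrow L^\infty(D)$ (so that $L^1(D)\hookrightarrow H(D)^{*}$), the map $r\mapsto\int_\Omega\int_D r\,\nabla w\cdot\nabla v$ is a bounded linear functional on $\htilde$, and $q^n-q\rightharpoonup 0$ there.

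The main obstacle is the first summand, $q^n\,\nabla(u(q^n)-w)$. Here $\nabla(u(q^n)-w)\rightharpoonup 0$ only \emph{weakly} in $L^2(D\times\Omega)^d$, so to pass to the limit I need the coefficients to converge \emph{strongly} — i.e.\ $q^n\nabla v\to q\nabla v$ in $L^2(D\times\Omega)^d$. The plan is to extract this from the weak convergence $q^n\rightharpoonup q$ in $\htilde=H(D)\otimes L^2(\Omega)$ by leaning on the compact Sobolev imbedding $H(D)\hookrightarrow\hookrightarrow L^\infty(D)$ in the spatial variable together with the uniform bound $\|q^n(\cdot,\omega)\|_H\le\qmax$; this is precisely why the problem is framed in the stronger $\htilde$-topology rather than in an $L^p(D\times\Omega)$ space. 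I expect this compactness step to be the delicate heart of the argument, since weak convergence in $\htilde$ does not on its own control oscillations in the stochastic variable; once it is in place, the limit equation $\int_\Omega\int_D q\,\nabla w\cdot\nabla v=\int_\Omega\int_D fv$ holds for every $v\in\honeoinf$ and uniqueness of $u(q)$ gives $w=u(q)$.
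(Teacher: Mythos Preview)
Your decomposition differs from the paper's, and the step you flag as ``the delicate heart'' is in fact a genuine gap: the strong convergence $q^n\nabla v\to q\nabla v$ in $L^2(D\times\Omega)^d$ that you need does \emph{not} follow from weak convergence $q^n\rightharpoonup q$ in $\htilde$, even with the uniform bound $\|q^n(\cdot,\omega)\|_H\le\qmax$ and the compact spatial embedding $H(D)\hookrightarrow\hookrightarrow L^\infty(D)$. The obstruction is exactly the one you name---oscillation in $\omega$---and it is fatal to your plan. Concretely, take $q^n(x,\omega)=c+\phi(x)Y_n(\omega)$ with $\phi\in H(D)$ fixed and $\{Y_n\}$ a bounded orthonormal sequence in $L^2(\Omega)$; then $q^n\rightharpoonup c$ in $\htilde$, yet for a deterministic test function $v$ one has $\|(q^n-c)\nabla v\|_{L^2(D\times\Omega)}^2=\|Y_n\|_{L^2(\Omega)}^2\int_D\phi^2|\nabla v|^2\,dx$, a nonzero constant. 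Spatial compactness cannot help because nothing is moving in $x$.

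The paper avoids this by reversing the split:
\[
q^n\nabla u^n - q\nabla u \;=\; (q^n-q)\,\nabla u^n \;+\; q\,\nabla(u^n-u).
\]
Now the \emph{fixed} limit coefficient $q$ sits with the weakly convergent factor, so the second summand is $\int_\Omega\int_D q\,\nabla(u^n-u)\cdot\nabla v=\langle u^n-u,v\rangle_q$, which vanishes directly because $\langle\cdot,\cdot\rangle_q$ defines an inner product equivalent to the $\honeoinf$ one and $u^n\rightharpoonup u$ there---no compactness required. For the remaining summand the paper applies Cauchy--Schwarz in the form
\[
\Big|\int_\Omega\int_D(q^{n}-q)\,\nabla u^{n}\cdot\nabla v\Big|
\le\Big(\int|q^{n}-q|\,|\nabla u^{n}|^2\Big)^{1/2}\Big(\int|q^{n}-q|\,|\nabla v|^2\Big)^{1/2},
\]
bounds the first factor uniformly via $|q^n-q|\le 2\qmax$ and \eqref{eqn: solution bound}, and drives the second factor to zero along a subsequence by extracting (as in the proof of Lemma~\ref{lemma: Q convex and weakly compact}) a subsequence $q^{n_k}\to q$ pointwise a.e.\ on $D\times\Omega$ and invoking dominated convergence with majorant $2\qmax|\nabla v|^2$. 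The structural point you missed is to pair the \emph{fixed} coefficient with the weakly convergent state; pairing the \emph{varying} coefficient $q^n$ with $\nabla(u^n-w)$ forces you to manufacture strong convergence of $q^n$ that simply is not there.
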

\begin{proof}
Let $q^n$ be a sequence in $\Qad$, so that $q^n\rightharpoonup q$ in $\htilde$ and $u(q^n)\rightharpoonup u$ in $\htilde^1_0$. The weak compactness of $\Qad$ shown in Lemma \ref{lemma: Q convex and weakly compact}, directly implies $q\in \Qad$. It now remains to be shown that $u=u(q)$ or equivalently that $u$ solves $e(q,u)=0$. Written in variational form, the requirement $e(q,u)=0$ is given by 
\begin{equation}\label{eqn: existence 1}
\int_\Omega \int_D q \nabla u\cdot \nabla v\; dx\; d\omega =  \int_\Omega\int_D f \; v\; dx\; d\omega \ \ \ \ \ \hbox{for all $v\in \honeoinf$}.
\end{equation}
Since the condition $u^n=u(q^n) \Leftrightarrow e(q^n,u^n)=0$ can be written as: 
\begin{equation}\label{eqn: existence 2}
\int_\Omega \int_D q^n \nabla u^n \cdot \nabla v\; dx\; d\omega = \int_\Omega\int_D f \; v\; dx\; d\omega  \ \ \ \ \ \hbox{for all $v\in\honeoinf$},
\end{equation}
it suffices to show that the left hand side of \eqref{eqn: existence 2} (or some subsequence thereof) converges to the left hand side of \eqref{eqn: existence 1} for all $v\in \honeoinf$. 
Now for any $n\geq 1$ and $v\in \honeoinf$, 
\begin{align*}
\int_\Omega \int_D (q^n \nabla u^n -q\nabla u) \cdot \nabla v \;dx\; d\omega &=\int_\Omega \int_D (q^n -q)\nabla u^n \cdot \nabla v \;dx\; d\omega\\
& + \int_\Omega \int_D q\nabla (u^n- u)\cdot \nabla v \;dx\; d\omega.\\
\end{align*}
Let $\{q^{n_k}\}$ be the subsequence of $\{q^n\}$ that converges to $q$ pointwise a.s.\;on $D\times\Omega$, as guaranteed by Lemma \ref{lemma: Q convex and weakly compact}. We can then bound the first term by
\begin{align*}
&\left |\int_\Omega \int_D (q^{n_k} -q)\nabla u^{n_k} \cdot \nabla v\; dx\; d\omega\right |\\
 \leq  &\left( \int_\Omega \int_D |q^{n_k}-q| |\nabla u^{n_k} |^2\; dx\; d\omega\right)^{\frac{1}{2}} \left( \int_\Omega \int_D |q^{n_k}-q|  |\nabla v|^2\; dx\; d\omega\right)^{\frac{1}{2}}\\
 \leq \  & 2\frac{\qmax}{\qmin}\|f\|_{L^2}\left( \int_\Omega \int_D |q^{n_k}-q|  |\nabla v|^2\; dx\; d\omega\right)^{\frac{1}{2}}\rightarrow 0\ \hbox{ as } n_k\rightarrow \infty,
\end{align*}
by the Dominated Convergence Theorem, since the integrand is bounded above by $2\qmax\|v\|_{\honeoinf}$. 

\vspace{1em}

\noindent The second term in this sum converges to $0$ due to the weak convergence $u^{n}\rightharpoonup u$ and the fact that the mapping 
$\|.\|_{q}: u \mapsto \|u\|_{q}:= \int_\Omega\int_D q|\nabla u|^2 dx d\omega$ defines a norm that is equivalent to $\|.\|_{\honeoinf}$, by virtue of the fact that $0<\qmin\leq q(x,\omega)\leq \qmax<\infty$.
Therefore
\begin{equation*}
\int_\Omega \int_D q \nabla u\cdot \nabla v \;dx\; d\omega =\lim_{n \rightarrow \infty}\int_\Omega \int_D q^n \nabla u^n\cdot \nabla v\; dx\; d\omega = \int_\Omega \int_D f v \;dx\; d\omega
\end{equation*}
for all $v\in \honeoinf$ and hence $e(q,u)=0$.
\end{proof}
By combining these lemmas, we can now show that a solution $q^*$ of the infinite dimensional minimization problem \eqref{eqn: infinite L2 problem} exists for any $\beta\geq 0$.
\begin{thm}[Existence of Minimizers]\label{thm: Existence of Min 1}
For each $\beta\geq 0$, the problem \eqref{eqn: infinite L2 problem} has a minimizer. 
\end{thm}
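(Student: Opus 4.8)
The plan is to apply the direct method of the calculus of variations, leaning on the three lemmas just established. Since $J(q,u)\geq 0$ for every feasible pair, the value $m := \inf\{J(q,u) : q\in\Qad,\ u=u(q)\}$ is well defined and nonnegative; the feasible set is nonempty because, as recalled above, for each $q\in\Qad$ there is a unique $u(q)\in\honeoinf$ solving $e(q,u)=0$. I would then fix a minimizing sequence $(q^n,u^n)$ with $q^n\in\Qad$, $u^n=u(q^n)$, and $J(q^n,u^n)\to m$.

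Next I would extract weak limits. By Lemma \ref{lemma: Q convex and weakly compact} the set $\Qad$ is weakly compact in $\htilde$, so after passing to a subsequence (not relabeled) we have $q^n\rightharpoonup q^*$ for some $q^*\in\Qad$. The a priori bound \eqref{eqn: solution bound}, which holds with the \emph{uniform} constant $\qmin$ for every element of $\Qad$, shows that $\{u^n\}=\{u(q^n)\}$ is bounded in $\honeoinf$; passing to a further subsequence, $u^n\rightharpoonup u^*$ in $\honeoinf$ for some $u^*$. By Lemma \ref{lemma: graph of u is weakly closed}, the pair $(q^*,u^*)$ lies in the (weakly closed) graph of $u$, i.e. $u^*=u(q^*)$, so $(q^*,u^*)$ is feasible for \eqref{eqn: infinite L2 problem}.

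Finally I would invoke weak lower semicontinuity. Both $q\mapsto\frac{\beta}{2}\|q\|_{\htilde}^2$ and $u\mapsto\frac{1}{2}\|u-\hat u\|_{\honeoinf}^2$ are convex and strongly continuous on their respective Hilbert spaces, hence weakly lower semicontinuous; since $q^n\rightharpoonup q^*$ and $u^n-\hat u\rightharpoonup u^*-\hat u$, we get $J(q^*,u^*)\leq\liminf_{n\to\infty}J(q^n,u^n)=m$. Because $(q^*,u^*)$ is feasible we also have $J(q^*,u^*)\geq m$, so $J(q^*,u^*)=m$ and $(q^*,u^*)$ is a minimizer. Crucially, compactness for the $q$-component comes from $\Qad$ itself and not from the regularization term, so the argument is unchanged when $\beta=0$, which is why the statement covers all $\beta\geq 0$.

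The one genuinely delicate point — passing to the limit in the nonlinear constraint to obtain $u^*=u(q^*)$ from only weak convergence of $q^n$ and $u^n$, which requires controlling the product $q^n\nabla u^n$ — has already been isolated and settled in Lemma \ref{lemma: graph of u is weakly closed} (via a.s. pointwise convergence of a subsequence of $q^n$ together with dominated convergence and the $q$-weighted energy norm), so what remains here is essentially bookkeeping. The only other point worth stating explicitly is that boundedness of $\{u^n\}$ in $\honeoinf$ hinges on the uniform lower bound $\qmin$ built into $\Qad$; without it the states need not form a bounded sequence and the compactness argument would break down.
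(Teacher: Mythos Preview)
Your proposal is correct and follows essentially the same route as the paper: extract weakly convergent subsequences of $(q^n,u^n)$ using weak compactness of $\Qad$ (Lemma~\ref{lemma: Q convex and weakly compact}) together with the uniform a~priori bound~\eqref{eqn: solution bound}, identify the limit as feasible via the weak closedness of the graph (Lemma~\ref{lemma: graph of u is weakly closed}), and conclude by weak lower semicontinuity of the norm terms. Your closing remark that the argument works for all $\beta\geq 0$ because compactness of the $q$-component comes from $\Qad$ rather than the regularization term matches the paper's own observation.
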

\begin{proof}
 Let $(q^n,u^n)$ be a minimizing sequence for the cost functional $J$ over $\Qad\times \honeoinf$, i.e.
\[ \inf_{(q,u)\in \Qad\times \honeoinf} J(q,u)=\lim_{n\rightarrow \infty} J(q^n,u^n)=\lim_{n\rightarrow \infty} \frac{1}{2} \|u^n-\hat u\|_{\honeoinf}^2+\frac{\beta}{2} \|q^n\|_{\htilde}^2\] 
Since $u^n$ satisfies the equality constraint $e(q^n,u^n) = 0$, and consequently $\|u^n\|_{\honeoinf}\leq \frac{1}{\qmin}\|f\|_{L^2}$ for all $n\geq 1$ (Lax-Milgram), the Banach Alaoglu theorem guarantees the existence of a weakly convergent subsequence $u^{n_k}\rightharpoonup u^* \in \honeoinf(D)$. Moreover, the weak compactness of $\Qad$ established in Lemma \ref{lemma: Q convex and weakly compact} also yields a subsequence $q^{n_k}\rightharpoonup q^*$ as $k\rightarrow \infty$, so that $q^*\in \Qad$. 
\noindent The fact that the infimum of $J$ is attained at the point $(q^*,u^*)$ follows directly from the weak lower semicontinuity of norms \cite{Reed1980}. Indeed,
\begin{align*}
J(q^*,u^*) &\leq \liminf_{n_1\rightarrow \infty} \frac{1}{2} \|u^{n_1}-\hat u\|_{\honeoinf}^2 + \liminf_{n_1\rightarrow \infty} \frac{\beta}{2} \|q^{n_1}\|_{\htilde}^2\\
&\leq \liminf_{n_1\rightarrow \infty} \left( \frac{1}{2} \|u^{n_1}-\hat u\|_{\honeoinf}^2 + \frac{\beta}{2} \|q^{n_1}\|_{\htilde}^2\right) = \inf_{(q,u)\in \Qad\times \honeoinf} J(q,u).
\end{align*}
Finally, it follows directly from Lemma \ref{lemma: graph of u is weakly closed} that $u^*=u^*(q^*)$ and hence $u^*$ satisfies the inequality constraint $e(q^*,u^*)=0$. The regularization term was not required to show the existence of minimizers.
\end{proof}

\subsection{A Saddle Point Condition}
Although solutions to \eqref{eqn: infinite L2 problem} exist, the inherent lack of smoothness of $q$ in the stochastic variable $\omega$ complicates the establishment of traditional necessary optimality conditions. A short calculation reveals that the equality constraint $e(q,u)=0$ is not Fr\'echet differentiable, as a function $q$ in $\htilde$. Additionally, the set of constraints has an empty interior in the $\htilde$-norm. Instead, we follow \cite{Chen1999} in deriving a saddle point condition for the optimizer $(q^*,u^*)$ of \eqref{eqn: infinite L2 problem} with the help of a Hahn-Banach separation argument.\\

Let $\langle \cdot, \cdot \rangle$ denote the $L^2(D\times \Omega)$ inner product. For any triple $(q,u,\lambda)\in \htilde \times \honeoinf \times \honeoinf$, we define the Lagrangian functional by
\begin{equation*}
L(q,u,\lambda) = J(q,u) + \langle e(q,u), \lambda \rangle_{\honeoinf} = \frac{1}{2}\|u-\hat u\|_{\honeoinf}^2 + \frac{\beta}{2} \|q\|_{\htilde}^2 + \langle q \nabla u, \nabla \lambda\rangle - \langle f, \lambda\rangle. 
\end{equation*}
The main theorem of this subsection is the following
\begin{thm}[Saddle Point Condition]
Let $(q^*,u^*)\in \Qad\times \honeoinf$ solve problem \eqref{eqn: infinite L2 problem}. Then there exists a Lagrange multiplier $\lambda^*\in \honeoinf$ so that the saddle point condition
\begin{equation}
L(q^*,u^*,\mu)\leq L(q^*,u^*,\lambda^*)\leq L(q,u,\lambda^*)
\end{equation}
holds for all  $(q,u,\mu)\in \Qad\times \honeoinf \times \honeoinf$.
\end{thm}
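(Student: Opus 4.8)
The plan is to follow the Hahn--Banach separation argument of \cite{Chen1999}. First, the left-hand inequality is essentially automatic: since $u^{*}=u(q^{*})$ satisfies the equality constraint, $e(q^{*},u^{*})=0$, so $\langle e(q^{*},u^{*}),\mu\rangle_{\honeoinf}=0$ for \emph{every} $\mu\in\honeoinf$; hence $L(q^{*},u^{*},\mu)=J(q^{*},u^{*})=L(q^{*},u^{*},\lambda^{*})$ irrespective of the eventual choice of $\lambda^{*}$, and the left inequality is in fact an equality. All the content therefore lies in the right-hand inequality, which after cancelling $\langle e(q^{*},u^{*}),\lambda^{*}\rangle=0$ says that
\begin{equation*}
J(q^{*},u^{*})\ \le\ J(q,u)+\langle e(q,u),\lambda^{*}\rangle_{\honeoinf}\qquad\text{for all }(q,u)\in\Qad\times\honeoinf,
\end{equation*}
i.e.\ that $(q^{*},u^{*})$ minimizes $L(\cdot,\cdot,\lambda^{*})$ over $\Qad\times\honeoinf$ and that \eqref{eqn: infinite L2 problem} admits no duality gap. (One expects $\lambda^{*}$ to be the adjoint state solving $-\nabla\!\cdot(q^{*}\nabla\lambda^{*})=-\Delta(\hat u-u^{*})$, which is well posed in $\honeoinf$ by Lax--Milgram since $q^{*}\ge\qmin$; the separation argument below recovers exactly such a $\lambda^{*}$.)

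To produce $\lambda^{*}$, I would introduce in $\R\times\honeoinf$ the set
\begin{equation*}
\mathcal A=\bigl\{(s,w):\ \exists\,(q,u)\in\Qad\times\honeoinf\ \text{with}\ J(q,u)-J(q^{*},u^{*})<s\ \text{and}\ e(q,u)=w\bigr\},
\end{equation*}
which is nonempty and open. Optimality of $(q^{*},u^{*})$ for \eqref{eqn: infinite L2 problem} is precisely the assertion $(0,0)\notin\mathcal A$, while $(s,0)\in\mathcal A$ for every $s>0$ places $(0,0)$ on $\partial\mathcal A$. Granting that $\mathcal A$ is convex, Hahn--Banach separation provides $(\gamma,\lambda^{*})\neq(0,0)$ with $\gamma s+\langle\lambda^{*},w\rangle\ge0$ throughout $\mathcal A$; taking $w=0$ and $s\downarrow0$ forces $\gamma\ge0$. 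To exclude $\gamma=0$, note that with $q=q^{*}$ fixed the affine map $u\mapsto e(q^{*},u)=(-\Delta)^{-1}\bigl(-\nabla\!\cdot(q^{*}\nabla u)-f\bigr)$ is onto $\honeoinf$, since $u\mapsto-\nabla\!\cdot(q^{*}\nabla u)$ is an isomorphism onto $\htilde^{-1}$ (Lax--Milgram) and $(-\Delta)^{-1}$ maps $\htilde^{-1}$ isomorphically onto $\honeoinf$; hence the $\honeoinf$-projection of $\mathcal A$ is all of $\honeoinf$ and $\gamma=0$ would force $\lambda^{*}=0$, a contradiction. Thus $\gamma>0$, and after rescaling to $\gamma=1$, letting $s$ decrease to $J(q,u)-J(q^{*},u^{*})$ for an arbitrary pair $(q,u)$ yields exactly the desired right-hand inequality with multiplier $\lambda^{*}$. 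That points of $\mathcal A$ are genuinely realized (the infimum defining the associated value function is attained) follows from the same weak-compactness and weak-closedness arguments as in Lemmas \ref{lemma: Q convex and weakly compact}--\ref{lemma: graph of u is weakly closed} and Theorem \ref{thm: Existence of Min 1}.

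The hard part is the convexity of $\mathcal A$, and it is genuinely delicate because $e(q,u)$ is \emph{bilinear} in $(q,u)$, so that neither its zero set $\{e(q,u)=0\}$ nor its range $\{e(q,u):q\in\Qad,u\in\honeoinf\}$ is convex for elementary reasons. The natural route is to recognize $\mathcal A$ as the strict epigraph (shifted by $J(q^{*},u^{*})$) of the value function
\begin{equation*}
\Psi(w):=\inf\bigl\{J(q,u):\ q\in\Qad,\ e(q,u)=w\bigr\}
\end{equation*}
and to prove $\Psi$ convex. Writing the constraint as $A(q)u=g$ with $A(q)=-\nabla\!\cdot(q\nabla\cdot)$ \emph{affine} in $q$ and uniformly coercive on $\Qad$, so that $u=A(q)^{-1}g$ and $\Psi(w)=\inf_{q\in\Qad}\bigl[\tfrac12\|A(q)^{-1}g-\hat u\|_{\honeoinf}^{2}+\tfrac{\beta}{2}\|q\|_{\htilde}^{2}\bigr]$ with $g$ the right-hand side attached to $w$, the goal would be to exploit the joint convexity in $(q,g)$ of the ``matrix--fractional'' quadratic $\langle g,A(q)^{-1}g\rangle$ --- available precisely because $A(q)$ is affine and positive definite --- together with convexity of $\Qad$ and of $\|\cdot\|_{\htilde}^{2}$, so that infimizing a jointly convex functional over the convex set $\Qad$ leaves a convex function of $g$, hence of $w$. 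I expect this verification --- equivalently, the assertion that the bilinearly constrained problem \eqref{eqn: infinite L2 problem} has no duality gap --- to be the real obstacle; the openness of $\mathcal A$, the surjectivity of $e(q^{*},\cdot)$, and the attainment in $\Psi$ are all routine given the elliptic estimates and compactness results already in hand.
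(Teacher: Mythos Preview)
Your overall architecture coincides with the paper's: both set up a Hahn--Banach separation in $\R\times\honeoinf$ between the set of achievable cost/constraint pairs and (effectively) the negative real half-line, and both rule out degeneracy of the separating functional by exploiting the surjectivity of $u\mapsto e(q^{*},u)$ --- your Lax--Milgram argument is exactly the paper's. Your observation that the left inequality is a trivial equality because $e(q^{*},u^{*})=0$ is correct, your set $\mathcal A$ differs from the paper's $S$ only cosmetically (strict versus non-strict inequality; one set versus a disjoint pair $S,T$), and your dismissal of openness/non-empty interior as ``routine given the elliptic estimates'' matches the content of the paper's Lemma~\ref{lemma: S has non-empty interior}.

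The genuine divergence --- and the gap in your proposal --- is the convexity step. Your matrix-fractional route does not apply to this cost: the joint convexity of $(q,g)\mapsto\langle g,A(q)^{-1}g\rangle$ is a statement about the \emph{duality pairing induced by $A(q)$ itself}, whereas the cost in \eqref{eqn: infinite L2 problem} is $\tfrac12\|A(q)^{-1}g-\hat u\|_{\honeoinf}^{2}$, with the $\honeoinf$-norm generated by $-\Delta$, not by $A(q)$. Expanding produces a leading term of the form $\langle g,\,A(q)^{-1}(-\Delta)A(q)^{-1}g\rangle$, in which $A(q)^{-1}$ appears \emph{quadratically}; the matrix-fractional lemma gives you nothing here, and there is no reason to expect this quantity to be jointly convex in $(q,g)$. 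So the route ``$\Psi$ is convex because the objective is a partial infimum of a jointly convex function'' is not available as written.

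The paper bypasses the value-function reformulation entirely and proves convexity of $S$ by an explicit construction (Lemma~\ref{lemma: S and T convex}): given $(q_{i},u_{i})$, set $q_{\alpha}=\alpha q_{1}+(1-\alpha)q_{2}\in\Qad$ and \emph{define} $u_{\alpha}\in\honeoinf$ as the unique solution of
\[
\langle q_{\alpha}\nabla u_{\alpha},\nabla\phi\rangle=\langle \alpha q_{1}\nabla u_{1}+(1-\alpha)q_{2}\nabla u_{2},\nabla\phi\rangle\qquad\forall\,\phi\in\honeoinf,
\]
which by design forces $e(q_{\alpha},u_{\alpha})=\alpha\,e(q_{1},u_{1})+(1-\alpha)\,e(q_{2},u_{2})$; the remaining work is then a scalar inequality for the cost. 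The idea you are missing is to take the convex combination at the level of the \emph{fluxes} $q_{i}\nabla u_{i}$ and recover $u_{\alpha}$ from the resulting coercive elliptic problem, rather than trying to extract an abstract operator-convexity statement about $q\mapsto A(q)^{-1}$ in the wrong norm.
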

\begin{proof}
Note that the second inequality simply reflects the optimality of $(q^*,u^*)$. To obtain the first inequality, we rely on a Hahn-Banach separation argument. Let 
\[S=\{(J(q,u)-J(q^*,u^*)+s, e(q,u))\in \mathbb R \times \honeoinf:  (q,u)\in \Qad\times \honeoinf, s\geq 0\}\]
and
\[T=\{(-t,0)\in \real \times \honeoinf: t>0\}\]
In the ensuing three lemmas we will show that
\begin{enumerate}
\item $S$ and $T$ are convex (Lemma \ref{lemma: S and T convex}), 
\item $S\cap T = \emptyset$ (Lemma \ref{lemma: S and T disjoint}),  and 
\item $S$ has at least one interior point (Lemma \ref{lemma: S has non-empty interior}).
\end{enumerate}
The Hahn-Banach Theorem thus gives rise to a separating hyperplane, i.e. a pair $(\alpha_0, \lambda_0)\neq (0,0)$ in $\R \times \honeoinf$, such that 
\begin{equation}\label{eqn: Hahn-Banach separation}
\alpha_0 (J(q,u)-J(q^*,u^*)+s) + \langle e(q,u), \lambda_0 \rangle_{\honeoinf} \geq -t\alpha_0  \ \ \ \forall t>0, s\geq 0, (q,u)\in \Qad\times \honeoinf.
\end{equation} 
Letting $s=t=1$ and $(q,u)=(q^*,u^*)$ readily yields $\alpha_0\geq 0$. In fact $\alpha_0 >0$. Suppose to the contrary that $\alpha_0 = 0$. Then by \eqref{eqn: Hahn-Banach separation}
\[\langle e(q,u), \lambda_0 \rangle_{\honeoinf} = \langle q \nabla u, \nabla \lambda_0\rangle -\langle f, \lambda_0\rangle\geq 0\ \ \ \forall (q,u)\in \Qad\times \honeoinf\]
particularly for $q=q^*$ and $u\in \honeoinf$ satisfying $\langle q^*\nabla u, \nabla \phi \rangle -\langle f-\lambda_0, \phi \rangle = 0 \ \forall \phi\in \honeoinf$, we have
\[ \langle q^* \nabla u, \nabla \lambda_0 \rangle -\langle f, \lambda_0\rangle = \langle f-\lambda_0,\lambda_0 \rangle - \langle f,\lambda_0\rangle = -\langle \lambda_0,\lambda_0\rangle\geq 0,\]
which implies that $\lambda_0=0$. This contradicts the fact that $(\alpha_0, \lambda_0)\neq (0,0)$. Dividing \eqref{eqn: Hahn-Banach separation} by $\alpha_0$ and letting $\lambda^* = \lambda_0/\alpha_0$ yields $J(q^*,u^*)\leq J(q,u) + \langle e(q,u), \lambda^* \rangle_{\honeoinf}\ \forall (q,u)\in \Qad\times \honeoinf$ and hence 
\begin{align*}
L(q^*,u^*,\mu) &= J(q^*,u^*)+ \langle e(q^*,u^*), \mu \rangle_{\honeoinf} = J(q^*,u^*)\\
&\leq J(q,u)+ \langle e(q,u), \lambda^* \rangle_{\honeoinf} = L(q,u, \lambda^*)
\end{align*}
for all $(q,u,\mu)\in \Qad\times\honeoinf\times \honeoinf$.
\end{proof}

\begin{lemma}\label{lemma: S and T convex}
The sets $S$ and $T$ are convex. 
\end{lemma}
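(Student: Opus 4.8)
The plan is to verify the defining convexity inequality directly for each of the two sets. The set $T=\{(-t,0):t>0\}$ is an open half-line, so its convexity is immediate: for $t_1,t_2>0$ and $\theta\in[0,1]$ one has $\theta(-t_1,0)+(1-\theta)(-t_2,0)=\big(-(\theta t_1+(1-\theta)t_2),\,0\big)\in T$. The substance of the lemma is the convexity of $S$.

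For $S$, fix two of its elements, represented by triples $(q_i,u_i,s_i)\in\Qad\times\honeoinf\times[0,\infty)$, $i=1,2$, and fix $\theta\in[0,1]$; one must produce $(\bar q,\bar u,\bar s)\in\Qad\times\honeoinf\times[0,\infty)$ with $e(\bar q,\bar u)=\theta e(q_1,u_1)+(1-\theta)e(q_2,u_2)$ and $J(\bar q,\bar u)-J(q^*,u^*)+\bar s=\theta\big(J(q_1,u_1)-J(q^*,u^*)+s_1\big)+(1-\theta)\big(J(q_2,u_2)-J(q^*,u^*)+s_2\big)$. Since the slack variables range over all of $[0,\infty)$, the second requirement reduces to finding $(\bar q,\bar u)$ with $J(\bar q,\bar u)\le\theta J(q_1,u_1)+(1-\theta)J(q_2,u_2)$ and then setting $\bar s$ equal to the (necessarily nonnegative) difference. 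I would take $\bar q:=\theta q_1+(1-\theta)q_2$, which lies in $\Qad$ by the convexity half of Lemma~\ref{lemma: Q convex and weakly compact} and, by convexity of $\|\cdot\|_{\htilde}^2$, already satisfies $\frac\beta2\|\bar q\|_{\htilde}^2\le\theta\frac\beta2\|q_1\|_{\htilde}^2+(1-\theta)\frac\beta2\|q_2\|_{\htilde}^2$; and I would let $\bar u$ be the unique $\honeoinf$-solution, furnished by Lax--Milgram through the coercivity $\bar q\ge\qmin$, of the linear elliptic problem $\langle\bar q\nabla\bar u,\nabla v\rangle=\theta\langle q_1\nabla u_1,\nabla v\rangle+(1-\theta)\langle q_2\nabla u_2,\nabla v\rangle$ for all $v\in\honeoinf$. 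Because the $f$-dependent term in $e$ enters affinely, this choice of $\bar u$ forces exactly $e(\bar q,\bar u)=\theta e(q_1,u_1)+(1-\theta)e(q_2,u_2)$, so the constraint coordinate is accounted for.

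The remaining, and genuinely delicate, step is the state-term bound $\frac12\|\bar u-\hat u\|_{\honeoinf}^2\le\theta\frac12\|u_1-\hat u\|_{\honeoinf}^2+(1-\theta)\frac12\|u_2-\hat u\|_{\honeoinf}^2$, which would complete $J(\bar q,\bar u)\le\theta J(q_1,u_1)+(1-\theta)J(q_2,u_2)$. This would be trivial if one could take $\bar u=\theta u_1+(1-\theta)u_2$, but that choice is incompatible with the constraint coordinate: since $e$ is bilinear in $(q,u)$, $e(\theta q_1+(1-\theta)q_2,\ \theta u_1+(1-\theta)u_2)$ differs from $\theta e(q_1,u_1)+(1-\theta)e(q_2,u_2)$ by the cross term $\theta(1-\theta)(-\Delta)^{-1}\nabla\cdot\big((q_1-q_2)\nabla(u_1-u_2)\big)$, which is exactly the failure of affine (indeed, of Fr\'echet-differentiable) structure this subsection is organized around. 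Reconciling the exact constraint coordinate with a convexity estimate for the state term is the main obstacle, and the step at which I would follow \cite{Chen1999}: one has to exploit the remaining slack, namely the parallelogram gaps $\theta(1-\theta)\frac\beta2\|q_1-q_2\|_{\htilde}^2$ in the regularization term and $\theta(1-\theta)\frac12\|u_1-u_2\|_{\honeoinf}^2$ in the state term, together with the freedom still available in perturbing $\bar q$ within $\Qad$, to absorb the error introduced by the cross term. Once $J(\bar q,\bar u)\le\theta J(q_1,u_1)+(1-\theta)J(q_2,u_2)$ is secured, putting $\bar s:=\theta\big(J(q_1,u_1)+s_1\big)+(1-\theta)\big(J(q_2,u_2)+s_2\big)-J(\bar q,\bar u)\ge0$ exhibits the convex combination of the two chosen elements as a point of $S$, which establishes convexity.
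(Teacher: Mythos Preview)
Your construction coincides with the paper's: both set $\bar q=\theta q_1+(1-\theta)q_2\in\Qad$ and take $\bar u\in\honeoinf$ to be the unique solution of
\[
\langle\bar q\,\nabla\bar u,\nabla\phi\rangle=\theta\langle q_1\nabla u_1,\nabla\phi\rangle+(1-\theta)\langle q_2\nabla u_2,\nabla\phi\rangle\qquad\forall\,\phi\in\honeoinf,
\]
which forces $e(\bar q,\bar u)=\theta e(q_1,u_1)+(1-\theta)e(q_2,u_2)$, and both then reduce the matter to the inequality $J(\bar q,\bar u)\le\theta J(q_1,u_1)+(1-\theta)J(q_2,u_2)$. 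The paper dispatches this last step in a single line (``it follows readily from the convexity of norms''); you flag it as the ``genuinely delicate'' step and propose to absorb the discrepancy using the parallelogram slack together with residual freedom in $\bar q$.

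You are right to be cautious, but the proposal remains a sketch, and the absorption strategy you outline does not close the gap. Take $D=(0,1)$, deterministic constants $q_1\equiv 1$, $q_2\equiv 2$, $\theta=\tfrac12$, and $u_1=0$, $u_2=v$, $\hat u=-v$ for an arbitrary nonzero $v\in H^1_0(0,1)$. Then $\bar q\equiv\tfrac32$, the defining equation gives $\bar u=\tfrac13 u_1+\tfrac23 u_2=\tfrac23 v$, and
\[
\tfrac12\|\bar u-\hat u\|_{\honeoinf}^2=\tfrac{25}{18}\|v\|_{\honeoinf}^2
\quad>\quad
\tfrac54\|v\|_{\honeoinf}^2=\tfrac12\Bigl(\tfrac12\|u_1-\hat u\|^2+\tfrac12\|u_2-\hat u\|^2\Bigr).
\]
The state-term parallelogram slack you invoke is $\theta(1-\theta)\tfrac12\|u_1-u_2\|^2=\tfrac18\|v\|^2$, which is already strictly smaller than the deficit $\tfrac{5}{36}\|v\|^2$; the regularization slack $\theta(1-\theta)\tfrac{\beta}{2}\|q_1-q_2\|_{\htilde}^2$ is a fixed number while $\|v\|$ may be taken arbitrarily large. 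So neither the paper's one-line appeal to convexity of norms nor your ``absorb the cross term with slack'' heuristic establishes the needed bound: $\bar u$ is \emph{not} the convex combination $\theta u_1+(1-\theta)u_2$, and convexity of $\|\cdot\|^2$ says nothing about it. A genuine argument---either a different choice of $(\bar q,\bar u)$ or an estimate specific to the elliptic equation defining $\bar u$---is still missing from both your write-up and the paper's.
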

\begin{proof}
Clearly, $T$ is convex. Let $0\leq  \alpha\leq 1$ and consider the convex combination $P_\alpha = \alpha P_1 +(1-\alpha) P_2$ where $P_1,P_2\in S$. Hence $P_\alpha$ is of the form $P_\alpha = (p_\alpha,w_\alpha)$ where 
\begin{align*}
p_\alpha &= \alpha(J(q_1,u_1)-J(q^*,u^*)+s_1)+(1-\alpha)(J(q_2,u_2)-J(q^*,u^*)+s_2)\\
w_\alpha&=\alpha e(q_1,u_1)+(1-\alpha)e(q_2,u_2)
\end{align*}
with $q_1,q_2\in \Qad,\ u_1,u_2\in \honeoinf, \ \hbox{and} \ s_1,s_2 \geq 0$. It now remains to show that $w_\alpha= e(q_\alpha,u_\alpha)$ for some $(q_\alpha,u_\alpha)\in \Qad\times \honeoinf$ and $p_\alpha = J(q_\alpha,u_\alpha)-J(q^*,u^*)+s_\alpha$ for some $s_\alpha\geq 0$. Let $q_\alpha= \alpha q_1+(1-\alpha) q_2\in \Qad$ and let $u_\alpha\in \honeoinf$ be the unique solution of the variational problem
\[\langle q_\alpha \nabla u_\alpha, \nabla \phi\rangle = \langle \alpha q_1 \nabla u_1 + (1-\alpha) q_2 \nabla u_2, \nabla \phi \rangle\hspace{1cm} \forall \phi \in \honeoinf.\]
Therefore
\begin{align*}
\langle w_\alpha, \phi \rangle_{\honeoinf} &= \langle \alpha q_1 \nabla u_1 +(1-\alpha)q_2\nabla u_2, \nabla \phi \rangle -\langle f,\phi \rangle\\
&= \langle q_\alpha \nabla u_\alpha, \nabla \phi \rangle - \langle f,\phi \rangle = \langle e(q_\alpha,u_\alpha), \phi \rangle_{\honeoinf} &\forall \phi\in \honeoinf
\end{align*}
which implies that $w_\alpha = e(q_\alpha, u_\alpha)$.  Moreover, it follows readily from the convexity of norms that 
\[J(q_\alpha,u_\alpha)\leq \alpha J(q_1,u_2)+(1-\alpha) J(q_2,u_2)\]
and therefore letting 
\begin{align*}
s_\alpha& =  \alpha J(q_1,u_1)+(1-\alpha)J(q_2,u_2) - J(q_\alpha,u_\alpha)+\alpha s_1 + (1-\alpha)s_2 \geq  \alpha s_1 + (1-\alpha)s_2 \geq 0
\end{align*}
we obtain
\[p_\alpha = J(q_\alpha,u_\alpha) - J(q^*,u^*) +s_\alpha.\]
\end{proof}

\begin{lemma}\label{lemma: S and T disjoint}
The sets $S$ and $T$ are disjoint. 
\end{lemma}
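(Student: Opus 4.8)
The plan is to show that if some point $(p, w)$ lies in both $S$ and $T$, we reach a contradiction with the optimality of $(q^*, u^*)$ for problem \eqref{eqn: infinite L2 problem}. Suppose $(p, w) \in S \cap T$. Membership in $T$ forces $w = 0$ and $p = -t$ for some $t > 0$, so in particular $p < 0$. Membership in $S$ means there exist $(q, u) \in \Qad \times \honeoinf$ and $s \geq 0$ with $w = e(q, u)$ and $p = J(q, u) - J(q^*, u^*) + s$. From $w = 0$ we get $e(q, u) = 0$, i.e.\ $u = u(q)$, so $(q, u)$ is feasible for \eqref{eqn: infinite L2 problem}.

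From $p = -t < 0$ together with $s \geq 0$ we obtain
\begin{equation*}
J(q, u) - J(q^*, u^*) = -t - s < 0,
\end{equation*}
that is, $J(q, u) < J(q^*, u^*)$. But $(q, u)$ is a feasible pair and $(q^*, u^*)$ is by hypothesis a minimizer of $J$ over the feasible set, so $J(q^*, u^*) \leq J(q, u)$, a contradiction. Hence no such point exists and $S \cap T = \emptyset$.

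The argument is essentially a one-line unwinding of the definitions of $S$ and $T$, so there is no real obstacle; the only thing to be careful about is recording that $w = e(q,u) = 0$ genuinely means $(q,u)$ is \emph{feasible} (it satisfies both the PDE constraint and, since $q \in \Qad$, the inequality constraints), which is exactly what licenses the use of optimality of $(q^*, u^*)$. Note that the strict inequality $t > 0$ in the definition of $T$ (as opposed to $t \geq 0$) is what makes the sets disjoint rather than merely having disjoint interiors — this is consistent with the Hahn--Banach step that follows, where $S$ is the set carrying an interior point.
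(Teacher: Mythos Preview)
Your proof is correct and follows the same approach as the paper's own proof, which consists of a single sentence invoking the optimality of $(q^*,u^*)$. Your version is in fact more carefully stated: you explicitly use $w=e(q,u)=0$ to conclude that $(q,u)$ is feasible before invoking optimality, whereas the paper's phrasing (``$J(q,u)\geq J(q^*,u^*)$ for all points $(q,u)$ in $\Qad\times \honeoinf$'') glosses over this point.
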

\begin{proof}
This follows directly from the fact that $J(q,u)\geq J(q^*,u^*)$ for all points $(q,u)$ in $\Qad\times \honeoinf $
\end{proof} 

\begin{lemma}\label{lemma: S has non-empty interior}
The set $S$ has a non-empty interior.
\end{lemma}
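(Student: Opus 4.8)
The plan is to produce an explicit interior point of $S$. The natural candidate is $(1,0)\in\mathbb R\times\honeoinf$, which belongs to $S$: take $q=q^*$, $u=u^*$ and $s=1$, and recall that $u^*=u^*(q^*)$, so that $e(q^*,u^*)=0$ and $J(q^*,u^*)-J(q^*,u^*)+1=1$. To show that a whole neighbourhood of $(1,0)$ lies in $S$, I would work inside the smaller set obtained by freezing $q=q^*$ and letting only $u$ and the slack $s$ vary,
\[
S':=\bigl\{\bigl(J(q^*,u)-J(q^*,u^*)+s,\ e(q^*,u)\bigr):u\in\honeoinf,\ s\ge 0\bigr\}\subseteq S .
\]

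The structural ingredient is that the affine map $u\mapsto e(q^*,u)$ is a homeomorphism of $\honeoinf$ onto itself. Written in weak form, the equation $e(q^*,u)=w$ reads $\langle q^*\nabla u,\nabla\phi\rangle=\langle f,\phi\rangle+\langle w,\phi\rangle_{\honeoinf}$ for all $\phi\in\honeoinf$; since $0<\qmin\le q^*(x,\omega)\le\qmax$, the bilinear form $(u,\phi)\mapsto\langle q^*\nabla u,\nabla\phi\rangle$ is bounded and coercive on $\honeoinf$, so Lax--Milgram furnishes a unique solution $u=u(w)$ depending on $w$ in a bounded affine, hence continuous, manner, with $u(0)=u^*$. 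It is precisely the preconditioning by $(-\Delta)^{-1}$ built into the definition of $e$ that makes $e(q^*,\cdot)$ land in $\honeoinf$ itself rather than in its dual, which is what allows this argument to close.

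Granting this, I would set $g(w):=J(q^*,u(w))-J(q^*,u^*)=\tfrac12\|u(w)-\hat u\|_{\honeoinf}^2-\tfrac12\|u^*-\hat u\|_{\honeoinf}^2$, a continuous function of $w$ with $g(0)=0$, and observe that $S'=\{(g(w)+s,w):w\in\honeoinf,\ s\ge0\}$; thus $(p,w)\in S'$ whenever $p\ge g(w)$. Choosing $\delta>0$ with $|g(w)|<\tfrac12$ for $\|w\|_{\honeoinf}<\delta$, any pair $(p,w)$ with $|p-1|<\tfrac12$ and $\|w\|_{\honeoinf}<\delta$ satisfies $p>\tfrac12>g(w)$, so $s:=p-g(w)>0$ and $(p,w)\in S'\subseteq S$. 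Since $\{(p,w):|p-1|<\tfrac12,\ \|w\|_{\honeoinf}<\delta\}$ is an open neighbourhood of $(1,0)$, the point $(1,0)$ is interior to $S$.

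The one place that genuinely requires care is the claim that $e(q^*,\cdot)$ maps onto all of $\honeoinf$: the feasible set $\Qad$ has empty interior, so no freedom comes from perturbing $q$, and the entire ``fat'' direction of $S$ must be supplied by the state $u$ together with the nonnegative slack $s$. Establishing this amounts to invoking Lax--Milgram in the correctly preconditioned space; once that is in place, the remainder is routine continuity bookkeeping for $g$.
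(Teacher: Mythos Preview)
Your proof is correct and follows essentially the same approach as the paper: freeze $q=q^*$, use Lax--Milgram to invert $u\mapsto e(q^*,u)$ so that every $w\in\honeoinf$ is hit, and then exploit the continuity of $w\mapsto J(q^*,u(w))-J(q^*,u^*)$ to make the slack $s$ nonnegative. The only cosmetic difference is that the paper carries out explicit quantitative estimates on $\|u(w)-u^*\|_{\honeoinf}$ and on $J(q^*,u^*)-J(q^*,u)$ in terms of $\|w\|_{\honeoinf}$, whereas you invoke continuity abstractly; the underlying argument is the same.
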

\begin{proof}
Clearly $(s_0, 0)= (J(q^*,u^*)-J(q^*,u^*)+s_0,e(q^*,u^*))\in S$ for any $s_0 > 0$.  For any $\epsilon \in (0,1)$, let $(s,w)$ belong to the $\epsilon$-neighborhood of $(s_0,0)$. In other words $|s-s_0|+\|w\|_{\honeoinf} \leq \epsilon$. Let $q=q^*$ and let $u$ be the solution to the problem 
\begin{equation}\label{eqn: non-empty interior pde}
\langle q^* \nabla u, \nabla \phi \rangle = \langle f,\phi\rangle + \langle\nabla w, \nabla \phi\rangle \hspace{2cm} \forall \phi \in \honeoinf(D).
\end{equation}
Clearly, $w = e(q^*,u)$ by definition. Then
\begin{align*}
s' &:= s_0 + J(q^*,u^*)- J(q,u) = s_0 + J(q^*,u^*) - J(q^*,u)\\
&=s_0 + \frac{1}{2}\int_\Omega \int_D |\nabla (u^*(x,\omega)-\hat u(x,\omega))|^2 dx\;d\omega -   \frac{1}{2}\int_\Omega \int_D |\nabla (u(x,\omega)-\hat u(x,\omega))|^2 dx\;d\omega \\
&=s_0 -  \frac{1}{2}\int_\Omega \int_D \nabla (u(x,\omega)-u^*(x,\omega)) \cdot \nabla(u(x,\omega)+u^*(x,\omega)-2\hat u(x,\omega)) \;dx\; d\omega
\end{align*}
Now $u^*$ satisfies $e(q^*,u^*)=0$ and hence $\|u^*\|_{\honeoinf}\leq \frac{C_D}{\qmin} \|f\|_{L^2}$ by \eqref{eqn: solution bound}. Similarly, since $u$ solves \eqref{eqn: non-empty interior pde}, it follows that $\|u\|_{\honeoinf} \leq \frac{C_D}{\qmin}(\|f\|_{L^2}+\|w\|_{\honeoinf}) \leq \frac{C_D}{\qmin}(\|f\|_{L^2}+\epsilon)$ and hence $\|u-u^*\|_{\honeoinf}\leq \frac{C_D}{\qmin}\epsilon$. We therefore have
\begin{align*}
s'&\geq s_0- \frac{1}{2} \|u-u^*\|_{\honeoinf} (\|u^*\|_{\honeoinf}+\|u\|_{\honeoinf}+2\|\hat u\|_{\honeoinf})\\
&\geq  s_0- \frac{\epsilon}{2 \qmin}(\frac{C_D}{\qmin} \|f\|_{L^2} + \frac{C_D}{\qmin}(\|f\|_{L^2}+\epsilon) + 2\|\hat u\|_{\honeoinf} )\\
&\geq s_0 - \frac{\epsilon}{2\qmin^2}(2C_D\|f\|_{L^2} +C_D\epsilon + 2\qmin \|\hat u\|_{\honeoinf} )\geq 0
\end{align*}
for small enough $\epsilon>0$. Therefore $(s,w)= (J(q^*,u)-J(q^*,u^*)+s', e(q^*,u))\in S$ for any $(s,w)$ in a small enough $\epsilon$-neighborhood of $(s_0,0)$. 
\end{proof}

In the following section, we will show that if the observed data $\hat u$ is expressed as a Karhunen-Lo\`eve series \cite{Loeve1978, Schwab2006}, we may approximate problem \eqref{eqn: infinite L2 problem} by a finite noise optimization problem \eqref{eqn: finite noise problem}, where $q$ is  a smooth, albeit high-dimensional, function of $x$ and intermediary random variables $\{Y_i\}_{i=1}^n$. The convergence framework not only informs the choice of numerical discretization, but also suggests the use of a dimension-adaptive scheme to exploit the progressive \lq smoothing' of the problem. 

\section{Approximation by the Finite Noise Problem}\label{section: finite noise approximation}
According to \cite{Loeve1978}, the random field $\uhat$ may be written as the Karhunen-Lo\`eve (KL) series
\begin{equation}
\uhat(x,\omega) = \uhat_0(x) + \sum_{k=1}^\infty \sqrt{\nu_k}b_k(x) Y_k(\omega),
\end{equation}
where $\{Y_k\}_{k=1}^\infty$ is an uncorrelated orthonormal sequence of random variables with zero mean and unit variance and $(\nu_k, b_k)$ is the eigenpair sequence of $\uhat$'s compact covariance operator $\mathscr C_{\uhat}: H^1_0(D)\rightarrow H^1_0(D)$ \cite{Schwab2006}. Moreover, the truncated series 
\begin{equation*}
\hat u^n(x,\omega) = u_0(x) + \sum_{k=1}^n \sqrt{\nu_k}b_k(x)Y_k(\omega)
\end{equation*}
converges to $\uhat$ in $\honeoinf$, i.e. $\|\uhat- \uhat^n\|_{\honeoinf}\rightarrow 0$ as $n\rightarrow \infty$. Assume w.l.o.g. that $\{Y_i\}_{i=1}^\infty$ forms a complete orthonormal basis for $L^2_0(\Omega)$, the set of functions in $L^2(\Omega)$ with zero mean. If this is not the case, we can restrict ourselves to $L_0^2(\Omega)\cap \overline{\Span\{Y_i\}}$. The following additional assumption imposes restrictions on the range of the random vectors we consider.
\begin{assumption}\label{ass: random variables bounded}
Assume the random variables $\{Y_n\}$ are bounded uniformly in $n$, i.e. 
\[y_{\mathrm{min}}\leq Y_n(\omega) \leq y_{\mathrm{max}} \ \ \ \hbox{a.s.\;on $\Omega$ for all $n\in\Natural$ and some $y_{\mathrm{min}}, y_{\mathrm{max}} \in \R$.}\]
 Furthermore, assume that for any $n$ the probability measure of the random vector $Y =(Y_1,...,Y_n)$ is absolutely continuous with respect to the Lebesgue measure and hence $Y$ has joint density $\rho_n:\Gamma^n\rightarrow [0, \infty)$, where the hypercube $\Gamma^n:=\prod_{i=1}^n\Gamma_i\subset [y_{\mathrm{min}}, y_{\mathrm{max}}]^n$ denotes the range of $Y$. 
\end{assumption}

Since $\hat u^n$ depends on $\omega$ only through the intermediary variables $\{Y_i\}_{i=1}^n$, it seems reasonable to also estimate the unknown parameter $q^n$ as a function of these, i.e. 
\[
q^n(x,\omega) = q^n(x, Y_1(\omega),...,Y_n(\omega)).
\]
The appropriate parameter space for the finite noise identification problem is not immediately apparent. In order for the finite noise optimization problem to approximate \eqref{eqn: infinite L2 problem}, $q_n$ should at the very least be square integrable in $y$, i.e. $q^n \in \htildefin(D):=H(D)\otimes L^2(\Gamma^n)\subset \htilde(D)$. With this parameter space, however, the finite noise problem suffers from the same lack of regularity  encountered in the infinite dimensional problem \eqref{eqn: infinite L2 problem}. In order to ensure both that the finite noise equality constraint $e_n(q,u)=0$ is Fr\'echet differentiable and that the set of admissible parameters $\Qad$ has a non-empty interior, we require a higher degree of smoothness in $q$ as a function of $y\in \Gamma^n$. 

\vspace{1em}

For the sake of our analysis, we therefore seek finite noise minimizers $q_n^*$ in the space $\hmix:=H(D)\otimes H^s_{\mathrm{mix}}(\Gamma^n)$, where $H_{\mathrm{mix}}^s(\Gamma^n)$ is the space of functions with bounded mixed derivatives, $s\geq 1$ \cite{Temlyakov1993}. A function $v\in \hmix\subset L^2(D\times \Gamma^n)$ is one for which the $\hmix$-norm, 
\begin{equation}\label{eqn: hmix norm formula}
\|v\|_{\hmix}^2 := \sum_{|\gamma|_{\infty}\leq s}\sum_{|\alpha|_1\leq t_d} \int_D \int_{\Gamma^n} \left | D_y^\gamma D_x^\alpha v(x,y)\right |^2 \rho_n(y) dy dx 
\end{equation}
is finite, where $\gamma = (\gamma_1,..., \gamma_n)\in \mathbb N^n$   and $\alpha = (\alpha_1,...,\alpha_d)\in \mathbb N^d$ are multi-indices, with $|\gamma|_{\infty} = \max\{\gamma_1,...,\gamma_n\}$, $|\alpha|_1=\alpha_1+...+\alpha_n$ and $t_d= 1$ when $d=1$ or $t_d = 2$ when $d=2,3$. Apart from considerations of convenience, the use of this parameter space is partly justified by the fact that $\{Y_n\}_{n=1}^\infty$ forms a basis for $L_0^2(\Omega)$. The minimizer $q^*$ of the original infinite dimensional problem \eqref{eqn: infinite L2 problem} thus takes the form
\begin{equation*}
q^*(x,\omega) = q_0^*(x) + \sum_{n=1}^\infty q_n(x) Y_n(\omega),
\end{equation*}
which is linear in each of the random variables $Y_n$. Any minimizer $q_n^*$ of \eqref{eqn: finite noise problem} that approximates $q^*$ (even in the weak sense) is therefore expected to depend relatively smoothly on $y$ when $n$ is large. At low orders of approximation, on the other hand, the parameter $q$ that gives rise to the model output $u(q)$ most closely resembling the partial data $\hat u^n$ may not exhibit the same degree of smoothness in the variable $y=(y_1,...,y_n)$. Since the accuracy in approximation of functions in high dimensions benefits greatly from a high degree of smoothness \cite{Barthelmann2000}, this suggests the use of a dimension adaptive strategy in which the smoothness requirement of the parameter is gradually strengthened as the stochastic dimension $n$ increases.

\vspace{1em}

We can now proceed to formulate a finite noise least squares parameter estimation problem for the perturbed, finite noise data $\hat u^n$:
\begin{equation}\label{eqn: finite noise problem}\tag{$P^n$}
\begin{split}
\min_{(q,u) \in \hmix\times \honeofin} J(q,u) &:=\frac{1}{2}\|u-\hat u^n\|_{\honeofin}^2 + \frac{\beta_n}{2}\|q\|_{\hmix}^2\\
s.t.\ \  q\in\ & \Qad^n, \ \ e_n(q,u) = 0 
\end{split}
\end{equation}
where $e_n(\cdot,\cdot):\hmix\times\honeofin \rightarrow \honeofin$ is defined by $e_n(q,u) = (-\Delta)^{-1}\tilde e_n(q,u)$ with 
\begin{align*}
\langle \tilde e_n(q,u),v\rangle_{\htildefin^{-1},\honeofin} := & \int_{\Gamma^n}\int_D q(x,y)\nabla u(x,y)\cdot \nabla v(x,y) \rho_n(y)\; dx\; dy \\
& - \int_{\Gamma^n} f(x,y) v(x,y) \rho_n(y) \;dx\;dy
\end{align*}
for all $v\in \honeofin(D)$, and
\begin{equation*}
\Qad^n:=\left\{ q\in \widetilde H^n: \begin{array}{ll} \ 0<\qmin-\frac{1}{k_n}\leq q(x,y) & \hbox{\ a.s. on } D\times \Gamma^n,\\
\|q(\cdot,y)\|_H\leq \qmax +\frac{1}{k_n}\ &\ \hbox{a.s. on } \Gamma^n
\end{array}\right\}.
\end{equation*}
with $k_n\rightarrow \infty$ a monotone increasing approximation parameter to be specified later.

\vspace{1em}

In the following, we justify the use of this approximation scheme by demonstrating that it not only lends itself more readily to standard first- and second-order optimization theory, but also that \eqref{eqn: finite noise problem} approximates \eqref{eqn: infinite L2 problem} in a certain sense. In particular, we first show that, as $n\rightarrow \infty$ and $\beta_n\rightarrow 0$, the sequence of minimizers $q_n^*$ of problem \eqref{eqn: finite noise problem} has a weakly convergent subsequence and that the limits of all convergent subsequences minimize the infinite dimensional problem \eqref{eqn: infinite L2 problem}. Tikhonov regularization theory for non-linear least squares problems \cite{Binder1994} provides the theoretical framework underlying the arguments in this section.  

\vspace{1em}

In order to mediate between the minimizer $q_n^*$ of the finite noise problem \eqref{eqn: finite noise problem}, formulated in the $\hmix$ norm, and that of the infinite dimensional problem, whose minimizer $q^*$ is measured in the $\htilde$ norm, we make use of the projection of $q^*$ on the first $n$ basis vectors:
\[\proj^n q^*=q_0^*(x) + \sum_{i=1}^n q_i(x) Y_i(\omega).\]
Evidently, $\proj^n q^* \rightarrow q^*$ as $n\rightarrow \infty$ in $\htilde$. Moreover, seeing that $\proj^n q^*$ is linear in $y$, it's norm in $\hmix$ can be bounded in terms of its norm in $\htilde$ as the following computation shows: 
\begin{lemma}
\[\|\proj^n q^*\|_{\hmix} \leq \sqrt 2 \|\proj^n q^*\|_{\htilde}. \]
\end{lemma}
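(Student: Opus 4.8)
The plan is to exploit the fact that $\proj^n q^\ast$ is affine (constant plus linear) in $y=(y_1,\dots,y_n)$, which collapses the $\hmix$-norm to only two groups of terms, and then to identify each group with a portion of the $\htilde$-norm via Parseval's identity for the orthonormal system $\{Y_i\}$. Write $v:=\proj^n q^\ast$, regarded as a function of $(x,y)$, so that
\begin{equation*}
v(x,y)=q_0^\ast(x)+\sum_{i=1}^n q_i(x)\,y_i .
\end{equation*}
Each $q_i$ lies in $H(D)$, being a Fourier coefficient of $q^\ast\in\htilde$ with respect to the orthonormal basis $\{1\}\cup\{Y_i\}$ of $L^2(\Omega)$; since $v$ is polynomial (in fact affine) in $y$, $v\in\hmix$ and the statement is meaningful.

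First I would observe that $D_y^\gamma v = 0$ whenever $|\gamma|_1\ge 2$, because $v$ has total degree at most $1$ in $y$. Hence in the double sum \eqref{eqn: hmix norm formula} only the indices $\gamma=0$ (with $D_y^0 v=v$) and $\gamma=e_i$ for $i=1,\dots,n$ (with $D_y^{e_i}v=q_i(x)$, independent of $y$) contribute, and all of these satisfy $|\gamma|_\infty\le 1\le s$, so they are indeed admissible. This yields
\begin{equation*}
\|v\|_{\hmix}^2=\sum_{|\alpha|_1\le t_d}\int_D\int_{\Gamma^n}|D_x^\alpha v|^2\rho_n\,dy\,dx
+\sum_{i=1}^n\sum_{|\alpha|_1\le t_d}\int_D\int_{\Gamma^n}|D_x^\alpha q_i|^2\rho_n\,dy\,dx .
\end{equation*}

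Next I would identify the two pieces. Since $\proj^n q^\ast$ depends on $\omega$ only through $Y_1,\dots,Y_n$ and the law of $(Y_1,\dots,Y_n)$ is $\rho_n(y)\,dy$, the first sum equals $\|v\|_{\htilde}^2$. In the second sum, $\int_{\Gamma^n}\rho_n\,dy=1$ reduces each term to $\|D_x^\alpha q_i\|_{L^2(D)}^2$, so that sum equals $\sum_{i=1}^n\|q_i\|_H^2$. On the other hand, expanding $D_x^\alpha v=D_x^\alpha q_0^\ast+\sum_i (D_x^\alpha q_i)Y_i$ and using $\Exp[Y_i]=0$, $\Exp[Y_iY_j]=\delta_{ij}$ gives the Parseval identity $\|v\|_{\htilde}^2=\|q_0^\ast\|_H^2+\sum_{i=1}^n\|q_i\|_H^2$, whence $\sum_{i=1}^n\|q_i\|_H^2\le\|v\|_{\htilde}^2$. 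Combining the two estimates gives $\|v\|_{\hmix}^2\le 2\|v\|_{\htilde}^2$, and taking square roots completes the argument. There is no serious obstacle here; the only point requiring care is the bookkeeping — keeping the change of variables from $\omega$ to $y$ consistent so that the $\htilde$-norm of $\proj^n q^\ast$ matches its $\rho_n$-weighted version on $\Gamma^n$, and invoking orthonormality of $\{Y_i\}$ correctly so that exactly one copy of $\|v\|_{\htilde}^2$ comes from $\gamma=0$ and at most one further copy from the $\gamma=e_i$ terms, producing precisely the constant $\sqrt2$.
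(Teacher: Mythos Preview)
Your proposal is correct and follows essentially the same approach as the paper: exploit the affinity of $\proj^n q^\ast$ in $y$ so that only the multi-indices $\gamma=0$ and $\gamma=e_i$ survive in the $\hmix$-norm, identify these two groups of terms with $\|\proj^n q^\ast\|_{\htilde}^2$ and $\sum_{i=1}^n\|q_i\|_H^2$ respectively, and then use the orthonormality of $\{Y_i\}$ (Parseval) to bound the latter by the former. The paper writes the final step as the explicit identity $\|\proj^n q^\ast\|_{\hmix}^2 = 2\sum_{i=0}^n\|q_i\|_H^2-\|q_0^\ast\|_H^2$, but this is exactly what you obtain and the argument is the same.
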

\begin{proof}
Let $e_i$ be the $i^{th}$ standard basis vector for $\Natural^n$. We now apply expression \eqref{eqn: hmix norm formula} to $\proj^n q^*$ to obtain
\begin{align*}
&\|\proj^n q^*\|_{\hmix}^2:= \sum_{|\gamma|_\infty\leq s}\sum_{|\alpha|_1\leq t_d} \int_D \int_{\Gamma^n} \left | D_y^{\gamma} D_x^\alpha \left[q_0(x) + \sum_{i=1}^n q_i(x) y_i\right] \right |^2 \rho_n(y) dy dx \\
&= \sum_{|\alpha|_1\leq t_d} \int_D \int_{\Gamma^n} \left | D_y^{0} D_x^\alpha \left[q_0(x) + \sum_{i=1}^n q_i(x) y_i\right] \right |^2 \rho_n(y) dy dx\\
&+ \sum_{i=1}^n \sum_{|\alpha|_1\leq t_d} \int_D \int_{\Gamma^n} \left | D_y^{e_i} D_x^\alpha \left[\sum_{i=1}^n q_i(x) y_i\right] \right |^2 \rho_n(y) dy dx\\
&=\int_{\Gamma^n}\|\proj^n q^*(\cdot, \omega)\|_{H}^2\rho_n(y)dy + \sum_{i=1}^n \sum_{|\alpha|_1\leq t_d} \int_D \int_{\Gamma^n} \left | D_x^\alpha q_i(x) \right |^2 \rho_n(y) dy dx\\
&=  \|\proj^n q^*\|_{\tilde H}^2 +\sum_{i=1}^n \|q_i\|_{H}^2 = 2\sum_{i=0}^n \|q_i\|_{H}^2-\|q_0\|_H^2 \leq 2 \|\proj^n q^*\|_{\htilde}^2.\\
\end{align*}
The second and third equalities follow from the fact that
\begin{equation*}
D_y^\gamma \left[\sum_{i=1}^nq_i(x) y_i \right]=
\left\{\begin{array}{ll}
\sum_{i=1}^nq_i(x) y_i &,\ \hbox{if } \gamma =0\\ 
q_i(x) &,\ \hbox{if } \gamma = e_i\\
0 &,\ \hbox{otherwise}
\end{array}\right. .
\end{equation*}
\end{proof}
The next lemma addresses the feasibility of $\proj^n q^*$.  Although $\proj^n q^* $ does not necessarily lie in the feasible region $\Qad$, the set on which $\proj^n q^*\notin \Qad$ can be made arbitrarily small as $n\rightarrow\infty$. Let $\mathscr A_n$ be the event that $\proj^n q^*$ lies inside the approximate feasible region $\Qad^n$, i.e.
\begin{align*}
\An &:= \{\omega\in \Omega: 0 < \qmin - \frac{1}{k_n}\leq \proj^n q^*(x,\omega)\ a.s.\ \hbox{on } D,\ \|\proj^n q^*(\cdot, \omega)\|_H\leq \qmax+\frac{1}{k_n}\ \}.\\
\end{align*}
Then we have
\begin{lemma}\label{lemma: projection is almost feasible}
There is a monotonically increasing sequence $k_n\rightarrow \infty$ so that $\prob(\Omega\backslash\An)\leq \frac{1}{k_n}$ for all $n\in\Natural$. 
\end{lemma}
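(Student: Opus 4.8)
The plan is to control the bad set $\Omega\setminus\An$ by a single nonnegative random variable measuring the truncation error of $q^*$, and then invoke Markov's inequality together with the convergence $\proj^n q^*\to q^*$ in $\htilde$ recorded above. Set $\delta_n:=\|\proj^n q^*-q^*\|_{\htilde}$, so that $\delta_n\to 0$; note also $\delta_n\le\|q^*\|_{\htilde}\le\qmax$, since $\proj^n$ acts as an orthogonal projection on the $L^2(\Omega)$ factor and $q^*\in\Qad$. Let $C_\infty$ denote the norm of the continuous imbedding $H(D)\hookrightarrow L^\infty(D)$, put $C:=\max\{C_\infty,1\}$, and define the random variable
\[
g_n(\omega):=\|\proj^n q^*(\cdot,\omega)-q^*(\cdot,\omega)\|_H ,
\]
which by the definition of the $\htilde$ norm satisfies $\Exp[g_n^2]=\delta_n^2$.

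The first step is the set inclusion $\Omega\setminus\An\subseteq\{\omega:g_n(\omega)>1/(Ck_n)\}$; equivalently, that $g_n(\omega)\le 1/(Ck_n)$ forces $\omega\in\An$. For such an $\omega$ the imbedding gives $\|\proj^n q^*(\cdot,\omega)-q^*(\cdot,\omega)\|_{L^\infty(D)}\le C_\infty\,g_n(\omega)\le 1/k_n$, and since $q^*\in\Qad$ implies $q^*(x,\omega)\ge\qmin$ a.s.\ on $D$ and $\|q^*(\cdot,\omega)\|_H\le\qmax$, one gets
\[
\proj^n q^*(x,\omega)\ \ge\ \qmin-\tfrac{1}{k_n}\ \ \text{for a.e.\ }x\in D,\qquad \|\proj^n q^*(\cdot,\omega)\|_H\ \le\ \qmax+g_n(\omega)\ \le\ \qmax+\tfrac{1}{k_n},
\]
which is precisely the membership condition for $\An$ (the lower bound being strictly positive whenever $k_n>1/\qmin$). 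Markov's inequality applied to $g_n^2$ then yields
\[
\prob(\Omega\setminus\An)\ \le\ \prob\!\Bigl(g_n^2>\tfrac{1}{C^2k_n^2}\Bigr)\ \le\ C^2 k_n^2\,\Exp[g_n^2]\ =\ C^2 k_n^2\,\delta_n^2 .
\]

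It then remains to choose the approximation parameters: I need $k_n$ monotone increasing with $k_n\to\infty$ and $k_n>1/\qmin$, while also $C^2 k_n^2\delta_n^2\le 1/k_n$, i.e.\ $k_n^3\le(C^2\delta_n^2)^{-1}$. Since $\bar\delta_n:=\sup_{m\ge n}\delta_m$ is nonincreasing with $\bar\delta_n\to 0$, the choice $k_n:=(C^2\bar\delta_n^2)^{-1/3}$ is nondecreasing, diverges to $\infty$, and satisfies $C^2 k_n^2\delta_n^2\le C^2 k_n^2\bar\delta_n^2=1/k_n$; because $k_n\to\infty$, the requirement $k_n>1/\qmin$ holds for all $n$ past some index, and the finitely many initial indices are absorbed by a harmless relabeling/shift of the sequence.

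I expect this last balancing step to be the only genuine point of substance. Increasing $k_n$ simultaneously shrinks the admissible window defining $\An$ (so $\prob(\Omega\setminus\An)$ grows) and tightens the target bound $1/k_n$, so one must verify that the decay $\delta_n\to 0$ is fast enough to accommodate both; the cube‑root scaling $k_n\sim\delta_n^{-2/3}$ shows there is always room to let $k_n$ diverge. Everything else — the $L^\infty(D)$ control of the truncation error via the imbedding and the Markov estimate — is routine.
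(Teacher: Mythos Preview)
Your proof is correct and follows essentially the same route as the paper: you control $\Omega\setminus\An$ by the superlevel set $\{g_n>1/(Ck_n)\}$ of the truncation error $g_n(\omega)=\|\proj^n q^*(\cdot,\omega)-q^*(\cdot,\omega)\|_H$, use the imbedding $H(D)\hookrightarrow L^\infty(D)$ together with $q^*\in\Qad$ to obtain the set inclusion, and apply Chebyshev/Markov with the cube-root balancing $k_n\sim\delta_n^{-2/3}$. Your use of $\bar\delta_n=\sup_{m\ge n}\delta_m$ in place of $\delta_n$ is a small but genuine improvement, since it actually secures the monotonicity of $k_n$ asserted in the statement (the paper's choice $k_n=(C^2\delta_n^2)^{-1/3}$ need not be monotone), and you also make explicit the requirement $k_n>1/\qmin$ needed for $\qmin-1/k_n>0$.
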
 
\begin{proof}
For any $n\geq 1$, let $k_n$ satisfy $\|\proj^n q^* -q^*\|_{\htilde}^2= \frac{1}{C^2k_n^3}$, where $C\geq 1$ is the imbedding constant for $H(D)\hookrightarrow L^\infty(D)$. Clearly $k_n\rightarrow \infty$ as $n\rightarrow \infty$. Let 
\[\mathscr B_n = \{\omega\in\Omega: \|\proj^nq^*(\cdot,\omega)-q^*(\cdot,\omega)\|_{H}\leq \frac{1}{C k_n}\}.\]
For any $\omega \in \mathscr B_n$, 
\begin{align*}
\Big|\|\proj^nq^*(\cdot,\omega)\|_H-\|q^*(\cdot, \omega)\|_H\Big|\leq \|\proj^n q^*(\cdot,\omega) -q^*(\cdot, \omega)\|_H\leq \frac{1}{Ck_n}\leq \frac{1}{k_n}
\end{align*}
and 
\begin{align*}
\|\proj^n q^*(\cdot,\omega) -q^*(\cdot, \omega)\|_{L^\infty}\leq C\|\proj^n q^*(\cdot,\omega) -q^*(\cdot, \omega)\|_H \leq \frac{1}{k_n},
\end{align*}
which implies $\mathscr B_n\subset \An$. Moreover, according to Chebychev's inequality
\begin{align*}
\prob(\Omega\backslash\An)\leq \prob(\Omega\backslash\mathscr B_n)\leq C^2k_n^2 \int_{\Omega} \|\proj^n q^*(\cdot,\omega) -q^*(\cdot, \omega)\|_H^2 d\omega = C^2k_n^2\|\proj^n q^* -q^*\|_{\htilde}^2\leq \frac{1}{k_n}.
\end{align*}
\end{proof}
In order to ensure strict adherence to the inequality constraints of \eqref{eqn: finite noise problem} for every $n$, we modify $\proj^n q^*(\cdot, \omega)$ on $\Omega\backslash\An$.
\begin{definition}
For all $n\in \mathbb N$, let $\qhatn\in \hmix\subset \htilde$ be defined as follows: 
\begin{equation}
\qhatn := \left\{ \begin{array}{cc} \proj^n q^*, &\  \omega \in \An\\ q_n^*, &\  \omega \notin \An \end{array} \right. . 
\end{equation}
\end{definition}
\noindent Evidently $\qhatn\in \Qad\cap \hmix$ and in light of Lemma \ref{lemma: projection is almost feasible}, it is reasonable to expect $\qhatn\approx \proj^n q^*$ for large $n$, except on sets of negligible measure. Indeed
\begin{lemma}\label{lemma: qhat approximates q*}
 $\displaystyle \qhatn\rightarrow q^*$ in $\htilde$ as $n\rightarrow \infty$.
\end{lemma}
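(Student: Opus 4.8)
**Proof plan for Lemma \ref{lemma: qhat approximates q*}.**

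The plan is to split the difference $\qhatn - q^*$ according to the two cases in the definition of $\qhatn$, so that
\[
\|\qhatn - q^*\|_{\htilde}^2 = \int_{\An} \|\proj^n q^*(\cdot,\omega) - q^*(\cdot,\omega)\|_H^2\, d\omega + \int_{\Omega\backslash\An} \|q_n^*(\cdot,\omega) - q^*(\cdot,\omega)\|_H^2\, d\omega,
\]
and to show that each of the two terms tends to $0$ as $n\to\infty$. The first term is bounded above by $\|\proj^n q^* - q^*\|_{\htilde}^2$, which goes to $0$ since $\proj^n q^*\to q^*$ in $\htilde$ (as noted just before the statement, this is the convergence of the Karhunen--Lo\`eve expansion of $q^*$). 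So the work is entirely in the second term.

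For the second term I would use the a priori $H$-norm bounds that hold pointwise a.s.\ on $\Omega$. On $\Omega\backslash\An$ we have $q_n^* = q_n^*(\cdot,\omega)$ with $\|q_n^*(\cdot,\omega)\|_H \le \qmax + \tfrac{1}{k_n}$ because $q_n^*$ is feasible for \eqref{eqn: finite noise problem} (it lies in $\Qad^n$), and similarly $\|q^*(\cdot,\omega)\|_H \le \qmax$ since $q^*\in\Qad$. Hence the integrand is bounded, uniformly in $n$ and $\omega$, by $\bigl(2\qmax + 1\bigr)^2$ (using $k_n \ge 1$). Therefore
\[
\int_{\Omega\backslash\An} \|q_n^*(\cdot,\omega) - q^*(\cdot,\omega)\|_H^2\, d\omega \le (2\qmax+1)^2\, \prob(\Omega\backslash\An) \le \frac{(2\qmax+1)^2}{k_n},
\]
where the last inequality is exactly Lemma \ref{lemma: projection is almost feasible}. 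Since $k_n\to\infty$, this term vanishes, and combining the two estimates gives $\|\qhatn - q^*\|_{\htilde} \to 0$.

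The only point that requires a little care — and what I expect to be the main (mild) obstacle — is justifying the uniform bound on $\|q_n^*(\cdot,\omega)\|_H$: one must invoke that a minimizer $q_n^*$ of \eqref{eqn: finite noise problem} exists and lies in the feasible set $\Qad^n$, so that the pointwise a.s.\ norm constraint $\|q_n^*(\cdot,y)\|_H \le \qmax + \tfrac{1}{k_n}$ applies; existence of such minimizers is the subject of Section \ref{section: finite noise problem} and is used implicitly throughout this part of the argument. Given that, the proof is just the two-piece estimate above, with the domination on the bad set supplied by the feasibility bounds and the smallness of the bad set supplied by Lemma \ref{lemma: projection is almost feasible}. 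No further structure of $q_n^*$ (in particular nothing about $\hmix$ smoothness) is needed.
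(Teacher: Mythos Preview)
Your proposal is correct and follows essentially the same route as the paper: the paper splits $\|\qhatn-q^*\|_{\htilde}^2$ over $\An$ and $\Omega\backslash\An$, bounds the first piece by $\|\proj^n q^*-q^*\|_{\htilde}^2$, and on the bad set uses the pointwise feasibility bounds $\|q_n^*(\cdot,\omega)\|_H\le \qmax+\tfrac{1}{k_n}$, $\|q^*(\cdot,\omega)\|_H\le \qmax$ together with $\prob(\Omega\backslash\An)\le \tfrac{1}{k_n}$ from Lemma~\ref{lemma: projection is almost feasible}, arriving at the constant $4(\qmax+\tfrac{1}{k_1})^2$ in place of your $(2\qmax+1)^2$. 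The only cosmetic difference is the choice of uniform constant (the paper uses $1/k_n\le 1/k_1$ rather than assuming $k_n\ge 1$), and your observation that one is tacitly using existence and feasibility of the minimizer $q_n^*\in\Qad^n$ is well taken.
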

\begin{proof}
\begin{align*}
\|\qhatn-q^*\|_{\htilde} &= \int_{\An}\|\proj^nq^*(\cdot, \omega)-q^*(\cdot, \omega)\|_{H}^2 d\omega + \int_{\Omega\backslash\An}\|q_n^*(\cdot, \omega)-q^*(\cdot, \omega)\|_{H}^2 d\omega \\
&\leq \|\proj^n q^*-q^*\|_{\htilde}^2 + \prob(\Omega\backslash\An)\sup_{\omega\in \Omega}\|q_n^*(\cdot, \omega)-q^*(\cdot, \omega)\|_{H}^2 \\
&\leq \|\proj^n q^*-q^*\|_{\htilde}^2 + \frac{1}{k_n}4(\qmax+\frac{1}{k_1})^2 \rightarrow 0.
\end{align*}
\end{proof}
\noindent We are now in a position to prove the main theorem of this section. For its proof we will make use of the fact that, due to the lower semicontinuity of norms
\begin{equation}\label{eqn: weak conv and limsup bounded imply strong conv}
x_n \rightharpoonup x, \ \ \limsup_{n\rightarrow \infty} \|x_n\|\leq \|x\| \ \Rightarrow \ x_n \rightarrow x
\end{equation}
for any sequence $x_n$ in a Hilbert space.
\begin{thm}
Let $\|\hat u-\hat u^n\|_{\htilde^1_0}\rightarrow 0$ and $\beta_n\rightarrow 0$ as $n\rightarrow \infty$. Then the sequence of minimizers $q_n^*$ of \eqref{eqn: finite noise problem} has a subsequence converging weakly to a minimizer of infinite dimensional problem \eqref{eqn: infinite L2 problem} and the limit of every weakly convergent subsequence is a minimizer of \eqref{eqn: infinite L2 problem}. The corresponding model outputs converge strongly to the infinite dimensional minimizer's model output. 
\end{thm}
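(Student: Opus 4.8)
The plan is to follow the standard Tikhonov-regularization convergence template, using the auxiliary sequence $\qhatn$ from the preceding lemmas as the feasible comparison point. First I would establish a uniform bound on the minimizers $q_n^*$. Since $q_n^*$ solves \eqref{eqn: finite noise problem}, optimality against the feasible competitor $(\qhatn, u_n(\qhatn))$ gives
\[
\tfrac12\|u_n(q_n^*)-\hat u^n\|_{\honeofin}^2 + \tfrac{\beta_n}{2}\|q_n^*\|_{\hmix}^2 \le \tfrac12\|u_n(\qhatn)-\hat u^n\|_{\honeofin}^2 + \tfrac{\beta_n}{2}\|\qhatn\|_{\hmix}^2,
\]
and the right-hand side is bounded: $\|u_n(\qhatn)\|_{\honeofin}$ is controlled by \eqref{eqn: solution bound}, $\hat u^n\to\hat u$ in norm, and $\|\qhatn\|_{\hmix}\le\sqrt2\,\|\proj^n q^*\|_{\htilde}$ (with a small correction on $\Omega\setminus\An$), so $\beta_n\|\qhatn\|_{\hmix}^2\to 0$. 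Because $q_n^*\in\Qad^n$ with $k_n\to\infty$, the sequence is also bounded in $\htilde$ and satisfies the $\Qad$ constraints in the limit; hence along a subsequence $q_n^*\rightharpoonup \bar q$ in $\htilde$, and the argument of Lemma \ref{lemma: Q convex and weakly compact} (the pointwise a.s.\ constraints pass to weak limits by Mazur/a.s.-subsequence extraction, and the $k_n$-slack vanishes) gives $\bar q\in\Qad$. The corresponding $u_n(q_n^*)$ are bounded in $\honeoinf$ by Lax--Milgram, so $u_n(q_n^*)\rightharpoonup\bar u$ along a further subsequence.

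Next I would identify the weak limit as feasible for \eqref{eqn: infinite L2 problem}, i.e.\ $\bar u = u(\bar q)$. This is the analogue of Lemma \ref{lemma: graph of u is weakly closed}, but now the equality constraints live on the truncated spaces $\honeofin$ with densities $\rho_n$. Testing $e_n(q_n^*,u_n(q_n^*))=0$ against a fixed $v\in\honeoinf$ (viewed via conditional expectation / the basis $\{Y_i\}$ as an element testing the finite-noise equation), I split $q_n^*\nabla u_n - \bar q\nabla\bar u = (q_n^*-\bar q)\nabla u_n + \bar q\nabla(u_n-\bar u)$ exactly as in Lemma \ref{lemma: graph of u is weakly closed}: the first term goes to zero by Cauchy--Schwarz, the uniform bound $\|\nabla u_n\|^2$-weighted and dominated convergence (using a pointwise a.s.-convergent subsequence of $q_n^*$, available since $q_n^*\to\bar q$ in $\htilde$ along the subsequence after a further extraction — strong convergence of $q_n^*$ is what \eqref{eqn: weak conv and limsup bounded imply strong conv} will eventually give, but here I only need it along the subsequence, which follows once optimality is used), and the second term vanishes by weak convergence and the $q$-weighted-norm equivalence. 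Care is needed because $\hat u^n$ and the equation are posed on $\Gamma^n$ whose dimension grows; I would handle this by regarding everything inside $\honeoinf=H^1_0(D)\otimes L^2(\Omega)$ via the isometric embeddings induced by the basis $\{Y_i\}$, so that $u_n(q_n^*)$ and $q_n^*$ are genuinely sequences in the fixed spaces $\honeoinf$, $\htilde$.

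Then comes optimality of the limit. For any competitor $(q,u(q))$ with $q\in\Qad$, I would use $\proj^n q$ (truncated, then clipped onto $\Qad^n$ as in the $\qhatn$ construction) as an admissible competitor for \eqref{eqn: finite noise problem}, so that
\[
\tfrac12\|u_n(q_n^*)-\hat u^n\|_{\honeofin}^2 \le \tfrac12\|u_n(q_n^*)-\hat u^n\|_{\honeofin}^2 + \tfrac{\beta_n}{2}\|q_n^*\|_{\hmix}^2 \le \tfrac12\|u_n(\widetilde{\proj^n q})-\hat u^n\|_{\honeofin}^2 + \tfrac{\beta_n}{2}\|\widetilde{\proj^n q}\|_{\hmix}^2.
\]
Passing to the limit: the left side is $\ge \tfrac12\|\bar u - \hat u\|_{\honeoinf}^2$ by weak lower semicontinuity (and $\hat u^n\to\hat u$), while the right side converges to $\tfrac12\|u(q)-\hat u\|_{\honeoinf}^2$ because $\widetilde{\proj^n q}\to q$ in $\htilde$ forces $u_n(\widetilde{\proj^n q})\to u(q)$ (Lemma \ref{lemma: u is continuous in q}, again via the embeddings), and $\beta_n\|\widetilde{\proj^n q}\|_{\hmix}^2\to 0$ since $\|\widetilde{\proj^n q}\|_{\hmix}\le\sqrt2\,\|\proj^n q\|_{\htilde}$ stays bounded. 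Taking $q=\bar q$ shows $\bar q$ minimizes the unregularized functional over $\Qad$, hence solves \eqref{eqn: infinite L2 problem} (with $\beta=0$; the statement's $\beta_n\to0$ is consistent with Theorem \ref{thm: Existence of Min 1}, where the regularizer was not needed). Finally, strong convergence of the model outputs: from $\tfrac12\|u_n(q_n^*)-\hat u^n\|_{\honeofin}^2 \to \tfrac12\|\bar u-\hat u\|_{\honeoinf}^2$ together with $u_n(q_n^*)\rightharpoonup\bar u$ and $\hat u^n\to\hat u$ strongly, one gets $\|u_n(q_n^*)\|\to\|\bar u\|$, so \eqref{eqn: weak conv and limsup bounded imply strong conv} yields $u_n(q_n^*)\to\bar u=u(\bar q)$ strongly.

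The main obstacle I anticipate is bookkeeping the dimension growth cleanly: making rigorous the identification of the finite-noise quantities ($u_n(q_n^*)$, $\hat u^n$, $\proj^n q$, the test functions) as honest sequences inside the fixed infinite-dimensional spaces $\honeoinf$ and $\htilde$ via the orthonormal system $\{Y_i\}$, and checking that the finite-noise weak formulation with density $\rho_n$ is exactly the restriction of the infinite-dimensional one to that subspace. Everything else is a routine adaptation of Lemmas \ref{lemma: Q convex and weakly compact}--\ref{lemma: graph of u is weakly closed} and the Tikhonov template; the genuinely delicate point is the dominated-convergence estimate for the $(q_n^*-\bar q)\nabla u_n\cdot\nabla v$ term, which requires extracting a pointwise-a.s.\ convergent subsequence of $q_n^*$ and hence hinges on first upgrading a weakly convergent subsequence to a strongly convergent one using the optimality inequality.
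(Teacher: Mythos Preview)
Your overall template—compare $q_n^*$ against the feasible surrogate $\qhatn$, pass to a $\limsup$, extract weak subsequential limits, identify the limit as feasible via graph closedness, deduce optimality, then upgrade the outputs to strong convergence—is exactly the paper's. Two economies in the paper are worth noting. First, the paper compares only against the fixed infinite-dimensional minimizer $q^*$ (through $\qhatn$), not against every $q\in\Qad$; once $\bar q\in\Qad$ is known, the single inequality $\|u(\bar q)-\hat u\|_{\honeoinf}\le\|u(q^*)-\hat u\|_{\honeoinf}$ already forces $\bar q$ to be a minimizer, so your construction of a clipped $\widetilde{\proj^n q}$ for arbitrary $q$ is unnecessary work. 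Second, to obtain $\bar u=u(\bar q)$ the paper simply invokes Lemma~\ref{lemma: graph of u is weakly closed}, whose hypothesis is \emph{weak} convergence $q^n\rightharpoonup q$ in $\htilde$; it does not re-run the argument inside the main proof.

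This second point is where your plan has a genuine gap. You propose to extract a pointwise-a.s.\ convergent subsequence of $q_n^*$ by first upgrading to strong convergence ``using the optimality inequality'' and \eqref{eqn: weak conv and limsup bounded imply strong conv}. That does not work: with $\beta_n\to 0$, the optimality inequality controls $\|u(q_n^*)-\hat u^n\|_{\honeoinf}$ (which is precisely what the paper uses, via \eqref{eqn: weak conv and limsup bounded imply strong conv}, to get strong convergence of the \emph{outputs}), but it yields no $\limsup$ bound on $\|q_n^*\|_{\htilde}$ beyond the uniform $\Qad^n$ constraint, so \eqref{eqn: weak conv and limsup bounded imply strong conv} cannot be applied to the parameters. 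The paper never establishes—and never needs—strong convergence of $q_n^*$; it gets only weak convergence and feeds that directly into Lemma~\ref{lemma: graph of u is weakly closed}. Drop the attempted bootstrap and cite the lemma as a black box, and the rest of your argument goes through.
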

\begin{proof}
Since $q_n^*$  is optimal for \eqref{eqn: finite noise problem}, we have
\begin{equation}\label{eqn: discrete optimality}
\|u(q_n^*) - \hat u^n\|_{\honeoinf}^2 + \beta_n \|q_n^*\|_{\hmix}^2 \leq \|u(\qhatn) - \hat u^n\|_{\honeoinf}^2 + \beta_n \|\qhatn\|_{\hmix}^2.
\end{equation}
Moreover, by definition $\qhatn(\cdot, Y(\omega))=q_n^*(\cdot, Y(\omega))$ for all $Y\in Y(\Omega\backslash\An)$ and hence 
\begin{align*}
&\|\qhatn\|_{\htilde}^2 - \| q_n^*\|_{\htilde}^2 \\
= & \sum_{|\gamma|_\infty\leq 1}\sum_{|\alpha|_1\leq t_d}\left( \int_{Y(\An)}\int_D \left | D_y^{\gamma} D_x^\alpha  \proj^n q^*\right |^2 \rho_n(y) dxdy - \int_{Y(\An)}\int_D \left | D_y^{\gamma} D_x^\alpha  q_n^*\right |^2 \rho_n(y) dx dy \right)\\
\leq & \sum_{|\gamma|_\infty\leq 1}\sum_{|\alpha|_1\leq t_d} \left(\int_{Y(\An)}\int_D \left | D_y^{\gamma} D_x^\alpha  \proj^n q^*\right |^2 \rho_n(y) dx dy \right)\leq \|\proj^nq^*\|_{\hmix}^2  \leq 2 \|\proj^n q^*\|_{\tilde H}^2
\end{align*}
from which it follows that
\begin{align*}
\|u(q_n^*) - \hat u^n\|_{\honeoinf}^2 &\leq \|u(\hat q_n^*) - \hat u^n\|_{\honeoinf}^2 + \beta_n \|\hat q_n^*\|_{\hmix}^2 -\beta_n \|q_n^*\|_{\hmix}^2 \\
& \leq \|u(\hat q_n^*) - \hat u^n\|_{\honeoinf}^2 + \beta_n \|\proj^n q^*\|_{\htilde}^2.
\end{align*}
By Lemmas \ref{lemma: qhat approximates q*} and \ref{lemma: u is continuous in q}
\begin{align*}
\limsup_{n\rightarrow \infty} \|u(q_n^*) - \hat u^n\|_{\honeoinf}^2 &\leq \lim_{n\rightarrow \infty}  \|u(\qhatn) - \hat u^n\|_{\htilde^1_0}^2 + \beta_n \|\proj^n q^*\|_{\htilde}^2 = \|u(q^*) - \hat u\|_{\htilde^1_0}^2,
\end{align*}
which, together with the Banach Alaoglu Theorem, guarantees the existence of  a subsequence $u(q_{n_j}^*)$ converging weakly to some $u_0\in\htilde^1_0$. Since feasible sets $\{\Qad^n\}_{n=1}^\infty$ form a nested sequence, all  functions $q_{n}^*\in \Qad^n\subset \Qad^1$, which is weakly compact (Lemma \ref{lemma: Q convex and weakly compact}). The sequence $q_n^*\in \Qad$ therefore has a subsequence, $q_{n_j}^* \rightharpoonup q_0\in \Qad^1$ in $\htilde$. Additionally, since $\Qad^n$ is nested and the graph of $u$ is weakly closed (Lemma \ref{lemma: graph of u is weakly closed}) we have $q_0\in \cap_{n=1}^\infty \Qad^n = \Qad$ and $u_0 = u(q_0)$. Therefore
\begin{align}
\|u(q_0)-\hat u\|_{\honeoinf}^2 &= \lim_{j\rightarrow \infty}\langle u(q_{n_j}^*)-\hat u^{n_j}, u(q_0)-\hat u \rangle_{\honeoinf} \nonumber\\
&\leq \liminf_{j\rightarrow \infty} \|u(q_{n_j}^*)-\hat u^{n_j}\|_{\honeoinf}\|u(q_0)-\hat u\|_{\honeoinf}\label{eqn: weak limit u less than liminf u}\\
&\leq \limsup_{j\rightarrow \infty} \|u(q_{n_j}^*)-\hat u^{n_j}\|_{\honeoinf}\|u(q_0)-\hat u\|_{\honeoinf}\label{eqn: liminf u less than limsup u}\\
&\leq \|u(q^*) - \hat u\|_{\honeoinf} \|u(q_0)-\hat u\|_{\honeoinf}\nonumber,
\end{align}
which implies 
$\|u(q_0)-\hat u\|_{\htilde^1_0}\leq \|u(q^*) - \hat u\|_{\htilde^1_0}$ and hence $q_0\in \Qad$ is a minimizer for $\eqref{eqn: infinite L2 problem}$. Inequalities \eqref{eqn: weak limit u less than liminf u} and \eqref{eqn: liminf u less than limsup u} further imply
\[\lim_{j\rightarrow \infty} \|u(q_{n_j}^*)-\hat u^{n_j}\|_{\honeoinf} = \|u(q_0)-\hat u\|_{\honeoinf},\]
which, together with the weak convergence $u(q_{n_j}^*)-\hat u^{n_j}\rightharpoonup u(q_0)-\hat u$, implies $u(q_{n_j}^*)-\hat u^{n_j}\rightarrow u(q_0)-\hat u$
due to \eqref{eqn: weak conv and limsup bounded imply strong conv}. In addition, the fact that $\hat u^{n_j}\rightarrow \hat u$ implies that $u(q_{n_j})\rightarrow u(q_0)$. Finally, this argument holds for any convergent subsequence of $\{q_n^*\}$ and hence the Theorem is proved.
\end{proof}

\section{The Finite Noise Problem}\label{section: finite noise problem}
The immediate benefit of using $\hmix$ as an approximate search space is that it imbeds continuously in $L^\infty(D\times \Gamma^n)$, regardless of the size of the stochastic dimension $n$. By virtue of the tensor product structure of $\hmix(\Gamma^n)$ we may consider Sobolev regularity component-wise, which, in conjunction with the compact imbedding of $H^1(\Gamma_i)$ in $L^\infty(\Gamma_i)$, gives rise to this property as the following lemma shows. 
\begin{lemma}\label{lemma: hmix imbeds continuously in Linfty}
The space $\hmix$ imbeds continuously in $L^{\infty}(D\times \Gamma^n)$ for all $n\in\Natural$.
\end{lemma}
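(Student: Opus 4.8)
The plan is to exploit the tensor-product structure of $\hmix = H(D)\otimes H^s_{\mathrm{mix}}(\Gamma^n)$ together with two classical one-dimensional facts: that $H(D)$ imbeds continuously in $L^\infty(D)$ (an assumption of the paper), and that $H^1(\Gamma_i)$ imbeds continuously in $L^\infty(\Gamma_i)$ for each bounded interval $\Gamma_i$ (a standard one-dimensional Sobolev imbedding, using that $s\ge 1$). The key point to extract is that a bound of $L^\infty$-norm by an $H^s_{\mathrm{mix}}$-norm is \emph{dimension-robust}, because the mixed-derivative norm controls precisely the tensor product of the one-dimensional norms, so the imbedding constant is the $(n{+}1)$-fold product of the individual one-dimensional constants rather than something that blows up combinatorially in $n$.

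First I would reduce to the purely stochastic factor: fix $x\in D$ and bound $\|v(x,\cdot)\|_{L^\infty(\Gamma^n)}$ by iterating the one-dimensional imbedding $H^1(\Gamma_i)\hookrightarrow L^\infty(\Gamma_i)$ coordinate by coordinate. Writing $C_i$ for the imbedding constant of $H^1(\Gamma_i)\hookrightarrow L^\infty(\Gamma_i)$, one gets
\[
\|v(x,\cdot)\|_{L^\infty(\Gamma^n)}^2 \;\le\; \Big(\prod_{i=1}^n C_i^2\Big)\sum_{|\gamma|_\infty\le 1}\int_{\Gamma^n}\big|D_y^\gamma v(x,y)\big|^2\,dy,
\]
and since $s\ge 1$ the sum on the right is dominated by the corresponding sum over $|\gamma|_\infty\le s$. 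Next I would apply the spatial imbedding $H(D)\hookrightarrow L^\infty(D)$ with constant $C_D$ to the function $x\mapsto \|v(x,\cdot)\|_{L^\infty(\Gamma^n)}$ — or, more carefully, bring the $L^\infty$ in $x$ inside by the same argument applied to each $D_y^\gamma v(\cdot,y)\in H(D)$, using that $\|\cdot\|_H$ controls up to $t_d$ spatial derivatives. One subtlety is that the norm \eqref{eqn: hmix norm formula} carries the weight $\rho_n(y)$, whereas the naive one-dimensional imbedding uses the unweighted Lebesgue measure on $\Gamma^n$; I would handle this either by assuming (as is natural under Assumption~\ref{ass: random variables bounded}, and consistent with the hypercube structure) that $\rho_n$ is bounded below by a positive constant on $\Gamma^n$, so that the unweighted $L^2$ norm is controlled by the weighted one, or by restating the imbedding with the understanding that $\rho_n$ is bounded above and below.

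Combining the two steps yields $\|v\|_{L^\infty(D\times\Gamma^n)}\le C_D\big(\prod_{i=1}^n C_i\big)\,\|v\|_{\hmix}$, which is the desired continuous imbedding. The main obstacle I anticipate is not the imbedding itself but making the constant genuinely independent of $n$ in a usable sense: if the $\Gamma_i$ shrink (as KL eigenvalues decay) the $C_i$ may tend to $1$ so the product converges, but in general one must either appeal to a uniform bound $\Gamma_i\subset[y_{\min},y_{\max}]$ (giving $C_i\le C$ for a fixed $C$, hence a constant $C_D C^n$ that is finite for each $n$ but grows with $n$) or accept an $n$-dependent constant — the lemma as stated only claims the imbedding holds "for all $n$," i.e., for each fixed $n$, so the clean iterated argument above suffices. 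I would therefore present the proof at the level of a fixed $n$, remarking afterward on the behavior of the constant, and flag the weight $\rho_n$ as the one genuinely technical point requiring a hypothesis on its positivity.
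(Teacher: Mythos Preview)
Your proposal is correct and follows essentially the same approach as the paper: iterate the one-dimensional Sobolev imbedding $H^1(\Gamma_i)\hookrightarrow L^\infty(\Gamma_i)$ coordinate by coordinate and combine with the spatial imbedding $H(D)\hookrightarrow L^\infty(D)$, arriving at a constant of the form $C_D\prod_{i=1}^n C_{\Gamma_i}$ which the paper likewise acknowledges may depend on $n$. The only difference is cosmetic --- the paper applies the spatial imbedding first and then peels off the stochastic coordinates one at a time, whereas you do the reverse --- and your remark about the weight $\rho_n$ is a legitimate technical point that the paper's proof glosses over by simply inserting $\rho_n(y)\,dy$ in the last line without justification.
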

\begin{proof}
For any fixed value $y_0$ of the random component $y$ and any multi-index $\gamma\in \mathbb N^{n}$, the function $D^{\gamma}_{y} q(\cdot, y_0)\in H^{t_d}(D)$ whenever $|\gamma|_{\infty}\leq s$. Similarly, if both spatial variable $x$ and all but the $i^{th}$ component $y_i$ of the stochastic variable $y$ are fixed at $x_0$ and $y_0^1,...,y_0^{i-1},y_0^{i+1},...y_0^n$ respectively, and $\alpha\in \mathbb N^{d},\ \mathbf \gamma^*_i:=(\gamma_1,...,\gamma_{i-1},0,\gamma_{i+1},..., \gamma_{n})\in \mathbb N^{n}$ are multi-indices satisfying $|\alpha|_1 \leq t_d$, $|\gamma_i^*|_\infty\leq 1$, then the mixed derivative $D^{\alpha}_{x}D^{\gamma_i^*}_{\mathbf y}q(x_0, y_0^1,...,y_0^{i-1}, \cdot, y_0^{i+1},...,y_0^{n})\in H^1(\Gamma_i)\hookrightarrow L^\infty(\Gamma_i)$. Therefore, by repeated application of the 1-dimensional Sobolev Imbedding Theorem \cite{Adams2003}  
\begin{align*}
\|q\|_{L^\infty(D\times \Gamma)}&= \max_{x\in D, y\in \Gamma^n}|q(x,y)| = \max_{y\in \Gamma^n} \|q(\cdot, y)\|_{L^\infty(D)}\leq C \max_{(y_1,... ,y_{n})\in\Gamma^n} \|D^\alpha_x q(\cdot,y)\|_{H^1(D)}\\
&\leq C \max_{(y_1,... ,y_{n-1})\in\Gamma^{n-1}} \left(\sum_{|\alpha|_1 \leq d_s}  \int_D (\max_{y_n\in\Gamma_n}|D_{x}^{\alpha} q(x, y_1,... ,y_{n})|)^2 dx\right)^{\frac{1}{2}}\\
& \leq C C_{\Gamma_n} \max_{(y_1,... ,y_{n-1})\in\Gamma^{n-1}} \left(\sum_{|\alpha|_1 \leq d_s}\sum_{\gamma_n=0}^1  \int_D \int_{\Gamma_n}|D_{x}^{\alpha}D^{\gamma_n}_{y_n} q(x, y_1,... ,y_{n})|^2 d\omega\;dx\right)^{\frac{1}{2}}\\
&\leq ...\\
&\leq C \prod_{i=1}^nC_{\Gamma_i} \left(\sum_{|\alpha|_1 \leq t_d} \sum_{|\gamma|_{\infty}\leq 1}\int_D\int_{\Gamma^n} |D_{x}^{\alpha} D_{y}^{\gamma}q(x,y)|^2\rho_n(y)dy\;dx\right)^{\frac{1}{2}}  = \widetilde C_n \|q\|_{\hmix}
\end{align*}
for some constant $\widetilde C_n>0$, independent of $q$, but possibly dependent on the total dimension $d = d_p +n$.\\
\end{proof}

\subsection{Differentiability and Existence of Lagrange Multipliers}

The Fr\'echet differentiability of the equality constraint $e_n(q,u)$ follows directly from its continuity in $q$ and $u$, since $e_n(q,u)$ is affine linear in both arguments. Continuity in $u$ is straightforward. For $u, \tilde u\in\honeofin(D)$,
\begin{align*}
\|e_n(q,u-\tilde u)\|_{\honeofin}^2 = \int_{\Gamma^n}\int_D q|\nabla(u-\tilde u)|^2 dx\;\rho_n\;dy\leq \qmax \|u-\tilde u\|_{\honeofin}^2. 
\end{align*}
Continuity in the parameter $q$ can now also be established, thanks to Lemma \ref{lemma: hmix imbeds continuously in Linfty}. Indeed,
\begin{align*}
\|e_n(q-\tilde q,u)\|_{\honeofin}^2 = \int_{\Gamma^n}\int_D |(q-\tilde q) \nabla u|^2 \; dx\; \rho_n\; dy 
\leq \|q\|_{L^\infty(D\times \Gamma)} \|u\|_{\honeofin}^2\leq \widetilde C_n^2 \|q\|_{\hmix}^2 \|u\|_{\honeofin}^2
\end{align*}
for any $q, \tilde q \in \hmix$. A simple calculation then reveals that the first derivative of $e_n$ in the direction $(h,v)\in \hmix\times\honeofin$ is given by:
\begin{equation}\label{eqn:derivative_of_equality_constraint}
D_{(q,u)}[e_n(q,u)](h,v) = D_q[e_n(q,u)]h + D_u[e_n(q,u)]v \in \ \honeofin,
\end{equation}
where the partial derivatives satisfy
\begin{align*}
\langle D_q[e_n(q,u)]h, \phi \rangle_{\honeofin} &= \int_{\Gamma^n}\int_D h \nabla u \cdot \nabla\phi\; dx\;\rho_n dy = \langle h\nabla u, \nabla \phi \rangle \hspace{0.5 cm}\hbox{and}\\
\langle D_u[e_n(q,u)]v,\phi\rangle_{\honeofin} &= \int_{\Gamma^n} \int_D q\nabla v \cdot \nabla \phi\;dx \;\rho_n dy = \langle q\nabla v,\nabla \phi\rangle\hspace{0.5cm} \hbox{for all } \phi\in \honeofin.
\end{align*}
\noindent We can now derive more traditional, gradient-based first order necessary optimality conditions.

\begin{thm}[Existence of Lagrange Multipliers]\label{thm: existence finite noise lagrange multiplier}
Let $(q^*,u^*)$ be a minimizer for problem \eqref{eqn: finite noise problem}. Then there exists a unique Lagrange multiplier $\lambda^*\in \honeofin$  for which the Lagrange functional $L:\hmix\times\honeofin\times \honeofin\rightarrow \R$, defined by
\begin{equation*}
L(q,u;\lambda):= J(q,u)+\langle \lambda, e_n(q,u)\rangle_{\honeofin}
\end{equation*}
satisfies
\begin{equation}\label{eqn: laplace stationarity}
D_{(q,u)}[L(q^*,u^*;\lambda^*)](h,v)\geq 0 \hspace{2pt} \hbox{ for all $(h,v)\in C(q^*)\times \honeofin$},
\end{equation}
where
\begin{equation*}
C(q^*) = \{ l (c-q^*) : c\in \Qad, \ 0\leq l\in \R\}.
\end{equation*}
Particularly, the adjoint equation and complementary condition hold
\begin{align}
& \langle q^*\nabla \lambda^*, \nabla \phi \rangle = -\langle u^*-\hat u^n,\phi\rangle_{\honeofin} \label{eqn: adjoint equation}\\
&\beta \langle q^*,q-q^*\rangle_{\hmix}+\langle (q-q^*)\nabla u^*, \nabla \lambda^* \rangle\geq 0 \ \ \ \hbox{for all $q\in \Qad$}. \label{eqn: complementary condition}
\end{align}
\end{thm}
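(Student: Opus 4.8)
The plan is to derive the necessary conditions via a standard variational inequality argument, using the Fréchet differentiability of $e_n$ (already established in the excerpt) together with the convexity and closedness of $\Qad$ established earlier. First I would observe that, since the equality constraint $e_n(q^*,u^*)=0$ can be used to solve $u^*$ locally as a function of $q$ via the implicit function theorem — which applies because $D_u[e_n(q^*,u^*)]$ is an isomorphism on $\honeofin$ by Lax--Milgram (it corresponds to the coercive form $\langle q^*\nabla v,\nabla\phi\rangle$) — the reduced cost functional $\hat J(q) := J(q,u(q))$ is Fréchet differentiable on a neighborhood of $q^*$ in $\hmix$. The minimizer $q^*$ lies in the convex set $\Qad$, so for any $q\in\Qad$ and $t\in(0,1]$ the point $q^*+t(q-q^*)\in\Qad$, and feasibility of the perturbed state is automatic; hence $\hat J(q^*+t(q-q^*))\geq\hat J(q^*)$, and dividing by $t$ and letting $t\to 0^+$ gives the variational inequality $D\hat J(q^*)(q-q^*)\geq 0$ for all $q\in\Qad$, which extends by positive homogeneity to the cone $C(q^*)$.

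Next I would introduce the adjoint state $\lambda^*\in\honeofin$ as the unique solution (again by Lax--Milgram applied to the coercive form with coefficient $q^*$) of the adjoint equation $\langle q^*\nabla\lambda^*,\nabla\phi\rangle = -\langle u^*-\hat u^n,\phi\rangle_{\honeofin}$ for all $\phi\in\honeofin$, which is exactly \eqref{eqn: adjoint equation}. The standard computation then shows $D\hat J(q^*)h = D_q[J(q^*,u^*)]h + D_u[J(q^*,u^*)](D_q u(q^*)h)$; writing $v := D_q u(q^*)h$, differentiating the constraint gives $\langle q^*\nabla v,\nabla\phi\rangle = -\langle h\nabla u^*,\nabla\phi\rangle$ for all $\phi$, and substituting $\phi=\lambda^*$ and $\phi=v$ appropriately eliminates $v$: one gets $D_u[J](v) = \langle u^*-\hat u^n,v\rangle_{\honeofin} = -\langle q^*\nabla\lambda^*,\nabla v\rangle = \langle h\nabla u^*,\nabla\lambda^*\rangle$. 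Combining with $D_q[J(q^*,u^*)]h = \beta\langle q^*,h\rangle_{\hmix}$ yields $D\hat J(q^*)h = \beta\langle q^*,h\rangle_{\hmix} + \langle h\nabla u^*,\nabla\lambda^*\rangle$, which is precisely $D_{(q,u)}[L(q^*,u^*;\lambda^*)](h,v)$ evaluated along the constraint manifold, so \eqref{eqn: laplace stationarity} follows from the variational inequality of the previous paragraph. Taking $h=q-q^*$ for $q\in\Qad$ gives the complementary condition \eqref{eqn: complementary condition}. Uniqueness of $\lambda^*$ is immediate since the adjoint equation has a unique solution by Lax--Milgram.

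The main obstacle is justifying the differentiability of the reduced functional and, relatedly, the legitimacy of the linearization $v = D_q u(q^*)h$ in the space $\hmix\times\honeofin$ — that is, verifying that the implicit function theorem genuinely applies. The key points to check are: (i) $e_n:\hmix\times\honeofin\to\honeofin$ is continuously Fréchet differentiable, which follows from the excerpt's computation that $e_n$ is affine in each argument with bounded bilinear part, using $\hmix\hookrightarrow L^\infty(D\times\Gamma^n)$ (Lemma \ref{lemma: hmix imbeds continuously in Linfty}) to control the $q$-dependence; and (ii) $D_u[e_n(q^*,u^*)]:\honeofin\to\honeofin$ is a bounded linear isomorphism, which holds because it is represented by the $\honeofin$-coercive, bounded bilinear form $(v,\phi)\mapsto\langle q^*\nabla v,\nabla\phi\rangle$ (coercivity from $q^*\geq\qmin - 1/k_n > 0$, boundedness from $q^*\leq\qmax+1/k_n$). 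A secondary subtlety is that $C(q^*)$ is a cone rather than a linear space, so I would be careful to take only one-sided directional derivatives $t\to 0^+$ and to note that the admissible directions $q-q^*$ keep the state feasible for all small $t>0$ because $\Qad$ is convex (Lemma \ref{lemma: Q convex and weakly compact}) and $u(\cdot)$ is defined on all of $\Qad^n$.
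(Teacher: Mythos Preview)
Your argument is correct, but it takes a genuinely different route from the paper. The paper proceeds abstractly: it verifies the \emph{regular point condition}
\[
D_{(q,u)}[e_n(q^*,u^*)]\bigl(C(q^*)\times \honeofin\bigr)=\honeofin,
\]
which holds simply because $0\in C(q^*)$ and $D_u[e_n(q^*,u^*)]$ is onto by Lax--Milgram, and then invokes the general Lagrange multiplier theorem of Maurer--Zowe to obtain $\lambda^*$ and the stationarity inequality \eqref{eqn: laplace stationarity} directly; the adjoint equation and complementary condition are then read off by setting $h=0$ and $v=0$ respectively. Your approach instead applies the implicit function theorem to build the reduced cost $\hat J(q)=J(q,u(q))$, derives a variational inequality on the convex set $\Qad$, and then \emph{constructs} $\lambda^*$ as the solution of the adjoint equation in order to rewrite $D\hat J(q^*)h$. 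Both routes hinge on the same core fact---the invertibility of $D_u[e_n(q^*,u^*)]$ via Lax--Milgram---but yours is self-contained and more elementary (no appeal to an external multiplier theorem), while the paper's is shorter and treats the constraint more symmetrically.

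One small point to tighten: you write that $D\hat J(q^*)h$ equals $D_{(q,u)}L(q^*,u^*;\lambda^*)(h,v)$ ``evaluated along the constraint manifold,'' and infer \eqref{eqn: laplace stationarity} from that. Strictly, \eqref{eqn: laplace stationarity} requires the inequality for \emph{all} $(h,v)\in C(q^*)\times\honeofin$, not only for $v=D_qu(q^*)h$. The missing half-line is that, precisely because $\lambda^*$ satisfies the adjoint equation, the $v$-dependent part of $D_{(q,u)}L$,
\[
\langle u^*-\hat u^n,v\rangle_{\honeofin}+\langle q^*\nabla v,\nabla\lambda^*\rangle,
\]
vanishes for every $v\in\honeofin$, so $D_{(q,u)}L(h,v)=\beta\langle q^*,h\rangle_{\hmix}+\langle h\nabla u^*,\nabla\lambda^*\rangle=D\hat J(q^*)h$ for all $v$, and \eqref{eqn: laplace stationarity} follows. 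State this explicitly and the proof is complete.
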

\begin{proof}
Let $(q^*,u^*)$ be a minimizer of problem \eqref{eqn: finite noise problem}. We  show that $(q^*,u^*)$ satisfies the regular point condition 
\begin{equation}\label{thm_lag_existence_eq1}
D_{(q,u)}[e_n(q^*,u^*)](C(q^*)\times \honeofin)=\honeofin,
\end{equation}
from which the existence of the Lagrange multiplier follows directly by \cite{Maurer1979}. In light of \eqref{eqn:derivative_of_equality_constraint}, this amounts to establishing the existence of solutions $(h,v)\in C(q^*)\times\honeofin$ to the equation 
\[
D_q[e_n(q^*,u^*)]h + D_u[e_n(q^*,u^*)]v = w,
\]
for arbitrary $w\in \honeofin$. Since $0\in C(q^*)$ and the finite noise elliptic equation 
\[
\int_{\Gamma^n}\int_D q \nabla v \cdot \nabla \phi\; dx\;\rho_n dy = \int_{\Gamma^n}\int_D \nabla w\cdot \nabla \phi\; dx\;\rho_n dy \ \ \forall \phi\in \honeofin
\] 
is solvable for any $w\in \honeofin$, condition \eqref{thm_lag_existence_eq1} is satisfied and hence there exists a Lagrange multiplier $\lambda^*\in \honeofin$ such that \eqref{eqn: laplace stationarity} holds. More explicitly, 
\begin{align}\label{thm_lag_existence_eq2}
0&\leq \langle u^*-\hat u, v \rangle_{\honeofin} +\beta\langle q^*,h\rangle_{\hmix}  + \langle D_q[e_n(q^*,u^*)]h + D_u[e_n(q^*,u^*)]v,\lambda^*\rangle_{\honeofin}  \nonumber \\
& = \langle u^*-\hat u, v \rangle_{\honeofin}+\beta \langle q^*, h\rangle_{\hmix} + \langle h \nabla u^*,\nabla \lambda^*\rangle + \langle q^*\nabla v,\nabla\lambda^*\rangle
\end{align}
for all $(h,v)\in C(q^*)\times \honeofin$. In particular, if $h=0$, we obtain 
\begin{equation*}
\langle q^* \nabla\lambda^*, \nabla v \rangle
 =  - \langle u^*-\hat u, v \rangle_{\honeofin}\ \ \ \ \ \ \hbox{for all $v\in \honeofin$}, 
\end{equation*}
which yields the adjoint equation \eqref{eqn: adjoint equation}.
The uniqueness of $\lambda^*$ now follows directly from the uniqueness of the solution to the elliptic equation \eqref{eqn: adjoint equation}. 
Finally, setting $v=0$ and $h = q - q^*$ in (\ref{thm_lag_existence_eq2}) for any $q\in \Qad$ yields the complementary condition \eqref{eqn: complementary condition}
\begin{equation*}
\beta \langle q^*,q-q^*\rangle_{\hmix}+\langle (q-q^*)\nabla u^*,\nabla\lambda^*\rangle \geq 0 \ \ \ \ \ \hbox{for all $q\in \Qad$}.
\end{equation*}
\end{proof}

\section{An Augmented Lagrangian Algorithm}\label{section:augmented_lagrangian_alg}

With the availability of derivative information, the finite noise problem \eqref{eqn: finite noise problem} can now be solved by more conventional optimization algorithms. We make use of the augmented Lagrangian method, an iterative approach that may be viewed as a modified penalty method. The quadratic penalty method avoids explicit enforcement of the equality constraint $e_n(q,u)=0$ by incorporating an additional term, that penalizes violations of the constraint, into the cost functional. For example in \eqref{eqn: finite noise problem}, this could require solving a series of sub-problems of the form
\begin{equation}\label{eqn: quadratic penalty method}
\min_{(q,u)\in \Qad \times \honeofin} \frac{1}{2}\|u-\uhat\|_{\honeofin}^2 + \frac{\beta}{2}\|q\|_{\hmix}^2 + \frac{c_k}{2}\|e_n(q,u)\|_{\honeofin}^2,
\end{equation}
where the sequence $\{c_k\}_{k=0}^\infty$ increases steadily as $k\rightarrow \infty$. In fact, the convergence of this class of methods requires $\lim_{k\rightarrow\infty} c_k = \infty$, leading to a progressive deterioration in the conditioning of the sub-problem. 

\vspace{1em}

The augmented Lagrangian method avoids this conditioning issue by instead solving the sequence of problems 
\begin{equation}\label{eqn: auxiliary problem}\tag{$P_\mathrm{aux}$}
\min_{(q,u)\in \Qad\times \honeofin} L_{c_k}(q,u,\lambda^k), 
\end{equation}
where $\{c_k\}_{k=0}^\infty$ is a non-decreasing sequence of positive numbers and the augmented Lagrangian functional, $L_{c_k}:\hmix\times\honeofin\times\honeofin\rightarrow \R$, is given by
\[
L_{c_k}(q,u,\lambda^k) =  \frac{1}{2}\|u-\uhat^n\|_{\honeofin}^2 + \frac{\beta}{2}\|q\|_{\hmix}^2 + \langle \lambda^k, e_n(q,u)\rangle_{\honeofin} + \frac{c_k}{2}\|e_n(q,u)\|_{\honeofin}^2. 
\]
The function $\lambda^k\in \honeofin$ is an approximation of the Lagrange multiplier defined in \eqref{eqn: adjoint equation} and is updated via $\lambda^{k+1} = \lambda^{k} + c_k e_n(q^k,u^k)$, where $(q^k,u^k)$ minimizes \eqref{eqn: auxiliary problem}. More explicitly,

\begin{algorithm}[H]\caption{The Augmented Lagrangian Algorithm}\label{alg: augmented lagrangian method}
\SetKwInOut{Input}{Input}
\SetKwInOut{Output}{Output}
\Input{$\hat u$}
\Output{$q$}
Choose $\lambda^0 \in H_0^1(D)$, and non-decreasing sequence $\{c_k\}$ with $c_0 >0$\;
Set $k=0$\;
\While{not converged}{
    Obtain minimizers $(q^k, u^k)$ by solving the auxiliary problem \eqref{eqn: auxiliary problem}\;
    Set $\lambda^{k+1}:= \lambda^{k} + c_k e_n(q^k,u^k)$\;
    Set $k = k + 1$ and test for convergence\;
}
\end{algorithm}

This algorithm, developed in \cite{Hestenes1969,Powell1978}, has been used extensively for deterministic parameter identification- and control problems in elliptic systems \cite{Ito1991,Ito1990,Kunisch1997}. Unlike for penalty methods, the sequence $\{c_k\}_{k=0}^\infty$ is not required to grow without bound to guarantee convergence.

\vspace{1em}

It was shown in \cite{Ito1990} and \cite{Kunisch1997} (Theorems 2.4, 2.5, and subsequent remarks) that the iterates $(q^k,u^k,\lambda^k)$ computed by Algorithm \ref{alg: augmented lagrangian method} converge to the minimizers $(q^*,u^*,\lambda^*)$ of \eqref{eqn: finite noise problem}, under the following second-order sufficient optimality condition:
\begin{assumption}\label{ass: second order sufficient optimality}
Assume there exists a constant $\tau = \tau(\beta)>0$ so that
\begin{equation*}
D^2_{(q,u)}[L(q^*,u^*, \lambda^*)] (h,v)^2 \geq \tau (\|h\|_{\hmix}^2 + \|v\|_{\honeofin}^2) \ \ \ \hbox{for all } (h,v)\in \hmix\times\honeofin.
\end{equation*}
\end{assumption}
The original convergence proof, formulated in a general Hilbert space setting, carries over directly to our problem. We refer the interested reader to the cited references. 
 Moreover, the cost functional $L_{c_k}$ appearing in the auxiliary problem \eqref{eqn: auxiliary problem} is quadratic in $q$ for fixed $u$ and $\lambda$ and quadratic in $u$ for fixed $q$ and $\lambda$, suggesting the use of sequential splitting methods to speed up the solution of the auxiliary subproblem. To wit, the subproblem \eqref{eqn: auxiliary problem} in Algorithm \ref{alg: augmented lagrangian method} is replaced with the sequence: Solve\\
\begin{equation}
\min_{q\in Q_\mathrm{ad}} L_{c_k}(q, u_{n,k}^*, \lambda_{n,k}^*). \label{eqn:aug_lag_aux_q}\tag{$P_{\mathrm{aux}}^q$}
\end{equation}
for $q_{n,k}^*$, then obtain $u_{n,k+1}^*$ by solving the minimization problem
\begin{equation}
\min_{u\in H_0^1} L_{c_k}(q_{n,k+1}^*,u, \lambda_{n,k}^*).\label{eqn:aug_lag_aux_u}\tag{$P_{\mathrm{aux}}^u$}
\end{equation}
 
\setlength{\algomargin}{2em}    
\begin{algorithm}[H]\label{alg: augmented lagrangian method with splitting}
\SetKwInOut{Input}{Input}
\SetKwInOut{Output}{Output}
Choose $\lambda_{n,0}\in h(D)$, and non-decreasing sequence $\{c_k\}$ with $c_0 >0$\;
Set $k=0$ \;
\While{not converged}{
    Solve the auxiliary problem sequentially, i.e. for iterates $q_{n,k+1}^*$ and $u_{n,k+1}^*$\;
    \hspace{1em} Get $q_{n,k+1}^*$ by solving problem \eqref{eqn:aug_lag_aux_q} (using current values of $u_{n,k}^*$ and $ \lambda_{n,k}^*$)\;
	\hspace{1em} Get $u_{n,k+1}^*$ by solving problem \eqref{eqn:aug_lag_aux_u} (using current values of $q_{n,k+1}^*$ and $\lambda_{n,k}^*$)\;
    Set $\lambda_{n,k+1}^*:= \lambda_{n,k}^* + c_k e_n(q_{n,k+1}^*,u_{n,k+1}^*)$\;
    Set $k = k + 1$ and test for convergence.
}
\caption{The Augmented Lagrangian Algorithm with Sequential Splitting}
\end{algorithm}

We consider the auxiliary sub-problems \ref{eqn:aug_lag_aux_q} and \ref{eqn:aug_lag_aux_u} in more detail. The unconstrained minimizer $u_{n,k+1}^*$ of \ref{eqn:aug_lag_aux_u} can be computed simply by solving the first order optimality system $D_u \big[ L_{c_k}(q,u,\lambda)\big](v) = 0$ for all $v\in \honeofin$ and fixed $q\in \Qad, \lambda \in \honeofin$, where 
\begin{align}
0 =\ & D_u \big[ L_{c_k}(q,u,\lambda)\big](v) \nonumber\\
 = \  & \langle u - \uhat, v\rangle_{\honeofin} + \langle \lambda, D_u[e_n(q,u)](v) \rangle_{\honeofin} + c_k \langle e_n(q,u), D_u[e_n(q,u)](v)\rangle_{\honeofin}\nonumber\\
 = \ & \langle u - \uhat, v\rangle_{\honeofin} + \langle q \nabla \lambda, \nabla v\rangle + c_k \langle q \nabla e_n(q,u),\nabla v \rangle \nonumber\\ 
= \ & \langle \nabla u + c_k q \nabla e_n(q,u), \nabla v \rangle - \langle \uhat - q\nabla \lambda, \nabla v \rangle.\label{eqn:aug_lag_grad_u}
\end{align}

The first order optimality system for \ref{eqn:aug_lag_aux_q} if $q\in \mathrm{int}(\Qad)$ amounts to setting $D_q[ L_{c_k}(q,u,\lambda)](h) = 0$ for all $h\in \hmix$. More specifically, 
\begin{align}
0 =\ & D_q[ L_{c_k}(q,u,\lambda)](h) \nonumber \\
= \ & \beta \langle q,h\rangle_{\hmix} + \langle \lambda, D_q[e_n(q,u)](h) \rangle_{\honeofin} + c_k \langle e_n(q,u), D_q[e_n(q,u)](h)\rangle_{\honeofin} \nonumber \\
= \ & \beta \langle q,h\rangle_{\hmix} + \langle h \nabla \lambda,\nabla u  \rangle + c_k \langle h \nabla e_n(q,u), \nabla u \rangle .\label{eqn:aug_lag_grad_q}
\end{align}

\section{Numerical Discretization}\label{section: discretization}

This section details the numerical discretization of the augmented Lagrangian method (Algorithm \ref{alg: augmented lagrangian method with splitting}) outlined in the previous section.  We approximate the parameter- $q$,  state- $u$, and adjoint random fields $\lambda$ spatially by means of piecewise polynomial basis functions related to finite element meshes of the spatial domain $D$. For the deterministic parameter identification problem, it was observed in \cite{Ito1991} that using a coarser mesh for the parameter space than for the state space amounts to an implicit regularization. For our numerical experiments, we therefore base our approximation of $q$ on a coarser triangulation $\mathcal T_q$ of $D$ with associated finite element space $V_q = \mathrm{span}\{\phi_1^q,...,\phi_{M_q}^q\}$, while estimating $u$ and $\lambda$ based on the finer grid $\mathcal T_u$, in our case a uniform refinement of $\mathcal T_q$, with associated subspace $V_{u} = \mathrm{span}\{\phi_{1}^u,...,\phi_{M_u}^u\}$. The spatial approximation $v^{M_u}\in V_u\otimes L^2(\Omega)$ of $v\in \honeoinf$ can be written explicitly as 
\[
v^{M_u}(x,\omega):=\sum_{i=1}^{M_u} v(x_i,\omega)\phi_i^u(x).
\]
Estimates of associated spatial inner products can be also be computed using the mass- and stiffness matrices defined component-wise by 
\[
A^u: = \left[\int_D \phi_{i_1}^u(x) \phi_{i_2}^u(x)\;dx \right]_{i_1,i_2=1}^{M_u} \ \ \ \hbox{and } \ A_x^u := \left[ \int_D \nabla\phi_{i_1}^u(x)\cdot \nabla\phi_{i_2}^u(x) \;dx \right]_{i_1,i_2=1}^{M_u}
\]
respectively. Similar expressions hold for the spatial approximations $h^{M_q}\in V_q\otimes L^2(\Omega)$ of random fields $h\in \htilde$ and for the mass- and stiffness matrices $A^q$ and $A_x^q$ on $V_q$, although we assume here that homogeneous Dirichlet boundary conditions are incorporated into the construction of  $A_x^u$, rendering it invertible, while no such conditions are imposed on $A_x^q$.

\subsection{Karhunen-Lo\`eve Expansion of the Data}

In order to reduce our variational problem \eqref{eqn: infinite L2 problem} to its \lq finite noise' approximation \eqref{eqn: finite noise problem}, we must first approximate the truncated KL expansion of the measured data $\uhat\in \honeoinf$, which in turn requires the spectral decomposition of the compact covariance operator $\mathscr C_{\uhat}:H^1_0(D) \rightarrow H^1_0(D)$, defined in terms of its covariance kernel 
\begin{align*}
C_{\uhat}(x,x') &= \Exp[(\uhat(x')-u_0(x'))(\uhat(x)-u_0(x))] \\
v\in H^1_0(D) &\mapsto\left(\mathscr C_{\uhat} v\right)(x') = \int_D \nabla_x C_{\uhat}(x,x')\cdot \nabla v(x)\; dx \in H^1_0(D),
\end{align*}
where $u_0(x) := \Exp[\uhat(x,\cdot)]$.  In practice, $\uhat$ commonly occurs in the form of an data matrix $\hat{\mathbf U} = [\uhat_{i,j}]$, where $\uhat_{i,j} = \uhat(x_i,\omega_j)$ denotes the $j^{th}$ random sample of the field obtained at spatial location $x_i$ for $j = 1,...,N_\mathrm{sample}$. We assume here that this data is either sampled at the vertices $x_i$ of the grid $\mathcal T_u$, or that it is interpolated, using splines for example, so that $\hat{\mathbf U}$ is of size $M_u$ by $N_\mathrm{sample}$. Let the sample mean $\mathbf m = [m_1,...,m_{M_u}]^T$ and covariance matrix $\Sigma = [\sigma_{i_1,i_2}]_{i_1,i_2=1}^{M_u}$ be defined componentwise by 
\begin{align*}
m_i & := \frac{1}{N_\mathrm{sample}} \sum_{j=1}^{N_\mathrm{sample}} \uhat(x_i,\omega_j), \ \text{and } \\
\sigma_{i_1,i_2} &:= \frac{1}{N_{\mathrm{sample}}}\sum_{j=1}^{N_\mathrm{sample}} \left(\uhat(x_{i_1},\omega_j)- m_{i_1})(\uhat(x_{i_2},\omega_j)-m_{i_2}\right),
\end{align*}
respectively. The sample mean $\uhat_0^{M_u}$ and covariance $C_{\uhat}^{M_u}$ of a finite element representation $\uhat^{M_u}$ of $\uhat$ then take the form 
\begin{align*}
\uhat_0^{M_u}(x) = \sum_{i=1}^{M_u} m_i \phi_i^u(x) \qquad \text{and } \\
C_{\uhat}^{M_u}(x,x') = \sum_{i_1,i_2} \sigma_{i_1,i_2}\phi_{i_1}^u(x)\phi_{i_2}^u(x'),
\end{align*}
respectively. This allows us to form the finite element approximation $\mathscr C_{\uhat}^{M_u}:V_u \rightarrow V_u$ of the covariance operator by letting
\begin{align*}
\left(\mathscr C_{\uhat}^{M_u} v\right)(x') &= \int_D C_{\uhat}^{M_u}(x,x')v(x)\;dx\\
 &= \sum_{i_1=1}^{M_u}v(x_{i_1})\phi_{i_1}(x')\left(\sum_{i_2=1}^{M_u} \sigma_{i_1,i_2}\int_D \nabla \phi_{i_1}(x)\cdot \nabla \phi_{i_2}(x)\;dx \right)
\end{align*}
for any element $v\in V_u$. The operation $\mathscr C_{\uhat}^{M_u}v$ can also be expressed in terms of the spatial coordinatization $\mathbf v = [v(x_1),...,v(x_{M_u})]^T$ of $v$ as the matrix-vector product $\Sigma A_x^u \mathbf v$ and hence the spectral decomposition of $\mathscr C_{\uhat}^{M_u}$ amounts to finding the eigenpairs $(\nu,\mathbf b)$ so that $\Sigma A_x^u \mathbf b = \nu \mathbf b $, or equivalently the generalized eigenvalue problem $A_x^{u}\Sigma A_x^{u} \mathbf{b} = \nu A_x^u\mathbf b$. By virtue of the positive semi-definiteness of the discretized covariance operator $\mathscr C_{\uhat}^{M_u}$ the eigenvectors $\mathbf b$ are orthogonal, so that the associated eigen-decomposition takes the form $\Sigma A_x^u = B D^\nu B^T$ with $D^\nu$ diagonal and $B$ unitary. The truncated KL expansion amounts to a projection of the data onto the eigenspace associated with the largest $n$ eigenvalues. The compactness and semi-positive definiteness of the operator $\mathscr C_{\uhat}$ ensure that its spectrum is countable with an accumulation point at $0$, allowing us to determine a suitable truncation level $n$ by estimating the rate of decay of the eigenvalues. Since $\mathscr C_{\uhat}^{M_u}$ only has finite rank, however, this criterion is subject to the level of spatial discretization $M_u$, i.e. we require $n\leq M_u$. The truncated, discretized KL expansion $\uhat^{n,M_u}$ of the field $\uhat$ now takes the form 
\begin{equation*}
\uhat^{n,M_u}(x,\omega)= \uhat_0^{M_u}(x) + \sum_{k=1}^{n} \sqrt{\nu_k}b_k^{M_u}(x)Y_k(\omega) \ \text{ for } \omega \in \Omega, 
\end{equation*}
where $Y(\omega) = [Y_1(\omega),...,Y_n(\omega)]^T$ is a random vector whose joint density function can be estimated from samples obtained by projecting the centered data matrix onto the subspace spanned by the dominant $n$ eigenvectors. Indeed, let $B_n$ be the matrix consisting of the first $n$ columns of $B$ and $D^\nu_n = \mathrm{diag}(\nu_1,...,\nu_n)$. Then
\begin{align*}\label{eqn: define random basis}
Y_k(\omega_j)  &= \frac{1}{\sqrt{\nu_k}} \int_D \nabla \left(\uhat^{n,M_u}(x,\omega_j) - \uhat_0^{M_u}(x)\right) \cdot \nabla b_k^{M_u}(x)\;dx \\
& = \sum_{i_1,i_2=1}^{M_u} \frac{1}{\sqrt{\nu_k}} \left(\uhat(x_{i_1},\omega_j - \uhat_0(x_{i_1})\right) b_k(x_{i_2})\int_D \nabla \phi_{i_1}^u(x)\cdot \nabla \phi_{i_2}^u(x)\;dx 
\end{align*}
for $k=1,...,n$, so that $Y(\omega_j) = (D_n^\nu)^{-\frac{1}{2}}B_n^T A_x^u \left( \mathbf{\hat U}(:,j) - \mathbf m\right)$ for $j=1,...,N_{\mathrm{sample}}$. It is from these samples that the joint density function $\rho_n$ can be estimated. The KL expansion discussed in this paper differs slightly from the usual approach \cite{Schwab2006}, in that we are defining the covariance operator on the Hilbert space $H^1_0(D)$ instead of on $L^2(D)$, to ensure convergence of the projection in the $\honeoinf$ norm. In practice, this choice of the norm doesn't make a significant difference in computations. 

\vspace{1em}

The estimation of multidimensional density functions is a highly non-trivial problem in general and an active field of current statistical research, well beyond the scope of this paper. The reader is referred to the books \cite{Scott2005,Klemelae2009}, as well as the survey article \cite{Scott1992}, for a more exhaustive treatment of the subject. 
The random vectors encountered in Section \ref{section: numerical examples} are only of moderate size and we either assume to know their joint densities or make use of kernel density estimators to approximate them empirically.

\subsection{Discretization in the Stochastic Component}

The choice of the type of nodal basis used to discretize the state equation \eqref{eqn: finite noise problem} or the adjoint system \eqref{eqn: adjoint equation} depends on the smoothness of the fields $u$ and $\lambda$ as functions of $y$. Under certain smoothness conditions on the parameter $q(x,y)$, which are readily satisfied if $q$ is written in terms of its KL expansion, the model output $u(x,y)$ can be shown to be analytic in $y$, warranting the use of global interpolating basis functions such as Lagrange polynomials \cite{Babuska2007}. In our case $q(x,y)$ is written in terms of the random variables in the KL expansion of the measured data $\uhat$ and hence such smoothness conditions may no longer hold. Consequently, neither the model output $u$, nor the Lagrange multiplier $\lambda$, characterized by the adjoint equation, are guaranteed to exhibit the requisite smoothness as functions of $y$ to allow for their approximation by a global polynomial basis. Here we make use of an interpolating basis of piecewise smooth, multi-linear hat functions. \\

Assume, without loss, of generality that the stochastic domain $\Gamma^n = [0,1]^n$. While much is known about interpolation formulas on one-dimensional domains, the problem of computing efficient and accurate multi-dimensional interpolants remains a challenge. Sparse grid methods \cite{Barthelmann2000, Gerstner1998, Novak1996, Smolyak1963} efficiently combine one-dimensional interpolation schemes to obtain accurate interpolants in higher dimensions with only a moderate number of grid points. Suppose $\Gamma^n$ is subdivided along each dimension into one-dimensional grids $X^{l_t}$, $t =1,2,...,n$ of equally spaced points, where the multi-index $\mathbf l = (l_1,...,l_{n})\in \mathbb N^{n}$ denotes the level of refinement in each direction. In particular, each grid $X^{l_t}$  consists of nodes $\{y_{l_t,j_t}\}_{j_t=0}^{m^{l_t}}$, where 
\[m^{l_t} = \left\{\begin{array}{ll} 1, & \hbox{if $l_t=1$}\\ 2^{l_t}, & \hbox{if $l_t > 1$}\end{array}\right.\ \ \hbox{and } \ y_{l_t,j_t}=\left\{\begin{array}{ll} 0.5, & \text{if } l_t = 1, j_t=1\\ 2^{-l_t}j_t, &\text{if $l_t>1$, for } j_t=0,1,...,m^{l_t}\end{array}\right. . \] 
For convenience, we define $ m^{l}:=(m^{l_1},...,m^{l_n})$ and take $j\leq  m^l$ to mean $j_t\leq m^{l_t}$ for each $t=1,..,n$. The full tensor product grid $X^{l}$ on $\Gamma^n$, given by
\[X^{l} := X^{l_1}\times\cdots \times X^{l_n},\]
thus consists of the points $\{y_{l,j}\}_{j\leq m^{l}}$.
Let $\{\psi_{l_t,j_t}\}_{j_t=0}^{m^{l_t}}$ denote a set of one-dimensional, nodal interpolating basis functions centered at the grid points $\{y_{l_t,j_t}\}_{j_t=0}^{m^{l_t}}$ of each one-dimensional grid $X^{l_t}$, $t=1,...,n$. 
We use bases of one-dimensional piecewise linear hat functions, defined for any point $y\in [0,1]$ by $\psi_{l_t,j_t}(y) :=1 $ when $l_t = 1$ and
\[
\psi_{l_t, j_t}(y): = 
\psi\left(m^{l_t}\left(y-\frac{j_t}{m^{l_t}} \right)\right), \ \ \ \ \psi(z) :=\left\{\begin{array}{cc} 1-|z|, & \ \ \hbox{if } -1\leq z\leq 1 \\ 0, & \hbox{otherwise}\end{array}\right. ,\]
when $l_t> 1$. A basis function $\psi_{l, j}$ centered at a node $y_{l,j} = (y_{l_1,j_1},...,y_{l_n, j_n})$ in the multi-dimensional grid $X^l = X^{l_1}\times ...\times X^{l_n}\subset [0,1]^n$ can then be obtained by taking the product of the appropriate univariate nodal basis functions, i.e. for any $y = (y_1,...,y_n)\in [0,1]^n$,
\[
\psi_{l,j}(y)= \psi_{l_1,j_1}\otimes\cdots\otimes\psi_{l_n,j_n}(y):= \prod_{t=1}^{n} \psi_{l_t, j_t}(y_t).
\]
Note that the one-dimensional grids are nested, i.e. $X^0\subset X^1\subset ...\subset X^{l_t}$ for any $l_t\in \Natural$. As a result, the subspaces spanned by one-dimensional interpolating basis functions are also nested and hence it is relatively straightforward to compare the accuracy of one-dimensional grids with various refinement levels $l_t$. A multi-dimensional interpolation formula with refinement level $L$ in each direction can be obtained by combining the one-dimensional interpolation formulas 
\[
U^{L}(v) = \sum_{j_t=0}^{m^L} v(y_{l_t,j_t})\psi_{l_t,j_t}
\]
to form the full tensor multi-variate interpolant
\[
\left(U^{L}\otimes\cdots\otimes U^{L}\right)(v) = \sum_{j\leq m^L} v(y_{l,j})\psi_{l,j}.
\]
The number of grid points needed to construct this interpolant is $(m^L)^n$, which scales exponentially as the dimension $n$ of the space increases. 

\vspace{1em}

The sparse grid interpolant $A^L(v)$ with interpolation level $L\geq 0$ is constructed from linear combinations of lower order full tensor interpolants as follows 
\begin{equation}\label{eqn: sparse grid definition}
A^L(v) = \sum_{1\leq |l|_1\leq L+n-1} (-1)^{N-|l|_1} \left(\begin{array}{c} n-1 \\ L -|l|_1 \end{array}\right) \left(U^{l_1}\otimes\cdots\otimes U^{l_n}\right)(v).
\end{equation}

Through cancellation, the effective number of grid points required is much lower than that of the full tensor product, while its accuracy is only marginally worse. 

\vspace{1em}

In practice, formula \eqref{eqn: sparse grid definition} is not used directly to construct interpolants. Instead, higher order interpolants are constructed recursively from lower order ones by adding corrections on the appropriately refined grid. This is achieved through the use of hierarchical basis functions, defined for every level $l = (l_1,...,l_n)$ to be the span $W^l(\Gamma^n) = \mathrm{span}\{\psi_{l,j}:j\in J_l\}$, where 
\[
J_l=\left\{j\in \Natural^n: j_t = \left\{ \begin{array}{ll}
1/2 & \text{if } l_t = 1,\\
0 \text{ or } 1 & \text{if } l_t = 2, \\
\text{an odd number in }\{1,...,m^{l_t}-1\} & \text{if } l_t \geq 3  
\end{array}\right. \right\}.
\]
Indeed, it can be shown (see \cite{Bungartz2003}) that $A^1(v) = ( U^1\otimes \cdots \otimes U^1) (v)$, while for any $L > 1$
\begin{equation*}
A^L(v) = A^{L-1}(v) + \Delta A^L(v),
\end{equation*}
where
\[
\Delta A^{L}(v)= \sum_{|l|_1 = L+n-1}\;\sum_{j \in J_l} \left[v(y_{l,j}) - A^{L-1}(v)(y_{l,j})\right]\cdot \psi_{l,j}(y).\]
The coefficients $v_z(y_{l,j}) = v(y_{l,j}) - A^{L-1}(v)(y_{l,j})$ appearing in the update $\Delta A^L$, also known as hierarchical surpluses, represent the discrepancy between the function $v$ and the $L-1$ level interpolant $A^{L-1}(v)$ at the new gridpoints. Hierarchical surpluses provide useful {\em a posteriori} error estimates that can readily be employed by an adaptive scheme to identify the regions where the grid should be refined \cite{Bungartz2003,Ma2009,Ma2010}. Unfortunately, it is difficult to incorporate adaptive approximation seamlessly into these high-dimensional gradient-based optimization methods. Since the functions $q_k, u_k$ and $\lambda_k$ are changing at each iteration of the optimization algorithm, the adaptive refinement scheme would have to be adjusted throughout the duration of the algorithm. This can be costly, especially in light of the fact that the relevant bilinear- and trilinear forms would have to be updated after each adaptive refinement or coarsening. 

\vspace{1em}

For the sake of notational expediency, we let $j=1,...,N$ be an enumeration of the sparse grid points, i.e. 
\[
\{y_j\}_{j=1}^N = \{y_{l,j}: 1\leq |l|_1\leq L+n-1, \ j\in J_l\},
\]
so that the stochastic sparse grid interpolant $v^{N}(x,y)$ of $v\in \honeofin(D)$ takes the form
\[
v^N(x,y) = \sum_{j=1}^N v_z(x,y_j)\psi_j(y),
\]
while the full approximation of $v$ is given by
\[
v^{M_u,N}(x,y) = \sum_{i=1}^{M_u}\sum_{j=1}^N v_z(x_i,y_j) \phi_i^u(x) \psi_j(y)
\]
The function values $v(x_i,y_j)$ can be related to the hierarchical surpluses $v_z(x_i,y_j)$ by means of a linear, invertible transformation. 


\subsection{The Discretized Optimization Problem}

To approximate the inner products and bilinear forms appearing in optimization Algorithm \ref{alg: augmented lagrangian method with splitting}, we require the deterministic bilinear forms introduced earlier, the $\rho$-weighted stochastic bilinear forms $S_\rho$ and $S_\rho^{\mathrm{mix}}$, and the stochastic trilinear form $T_\rho$, defined componentwise as follows
\begin{align*}
S_\rho& = \left[\int_{\Gamma^{n}}  \psi_{i_1}(y) \psi_{i_2}(y)\;\rho_{n}(y)\;dy\right]_{i_1,i_2=1}^{N},\\
S_\rho^{\mathrm{mix}} &= \left[\sum_{|\gamma|_\infty\leq s}\int_{\Gamma^{n}} D_y^\gamma \psi_{i_1}(y)D_y^\gamma \psi_{i_2}(y)\;\rho_{n}(y)\;dy,\right]_{i_1,i_2=1}^{N},\ \hbox{ and} \\
T_\rho&= \left[\int_{\Gamma^{n}} D^\gamma_y \psi_{i_1}(y)D^\gamma_y \psi_{i_2}(y)\psi_{i_3}(y)\;\rho_{n}(y)\;dy\right]_{i_1,i_2,i_3=1}^{N}.
\end{align*}
The evaluation of these multi-dimensional integrals for any given density function $\rho$ is a challenging task in general, although they can be computed offline. Note that, whereas each basis function $\psi_j(y)$ can be written as the product of appropriate one-dimensional basis functions, the $\rho$ cannot in general be decomposed as the product of its marginals, thus preventing the effective decoupling of these integrals into products of simpler ones.

\vspace{1em}

For any function $v\in \honeofin(D)$, we define $\mathbf{v}^z$  $:= [\mathbf{v}^z_1,...,\mathbf{v}^z_{N}]^T$ to be the vector of hierarchical surpluses where $\mathbf{v}^z_j = [v_z(x_1,y_j),...,v_z(x_{M_u},y_j)]^T$ are the surpluses corresponding to the sparse grid node $y_j$. Let a similar definition hold for functions $h\in \hmix(D)$. The $\honeofin$-inner product of approximations $v^{M_u,N}$ and $w^{M_u,N}$ then take the form
\begin{align*}
&\langle v^{M_u,N}, w^{M_u,N} \rangle_{\honeofin}\\ =&\sum_{i_1,i_2=1}^{M_u}\sum_{j_1,j_2=1}^N v_z(x_{i_1},y_{j_1})w_z(x_{i_2},y_{j_2})\left(\int_{\Gamma^n}\psi_{j_1}\psi_{j_2}\rho\;dy\right)\left(\int_D \nabla \phi_{i_1}^u \cdot \nabla\phi_{i_2}^u\;dx\right)\\
=& \sum_{j_1,j_2=1}^N (\mathbf {v}^z_{j_2})^T A_x^u \mathbf{w}^z_{j_1} = \ (\mathbf{v}^z)^T (S_\rho \otimes A_x^u) \mathbf{w}^z. 
\end{align*}
Similarly, 
\begin{align*}
& \langle v^{M_u,N}, w^{M_u,N} \rangle_{\tilde L^2} =  (\mathbf{v}^z)^T (S_\rho \otimes A^u) \mathbf{w}^z, \ \text{ and }\\ 
& \langle h^{M_q,N}, k^{M_q,N} \rangle_{\hmix} =  (\mathbf h^z)^T (S_\rho^{\mathrm{mix}} \otimes A_x^q) \mathbf{k}^z,
\end{align*}
for any two functions $h,k\in \hmix(D)$. The discretized $q$-weighted bilinear form\\* $\langle q^{M_q,N}\nabla v^{M_u,N},\nabla w^{M_u,N}\rangle$ on the other hand requires the use of the weighted trilinear form $T_\rho$. Indeed
\begin{align*}
&\langle q^{M_q,N} \nabla u^{M_u,N}, \nabla v^{M_u,N}\rangle = \int_{\Gamma^n}\int_D q^{M_q,N} \left(\nabla u^{M_u,N}\cdot \nabla v^{M,N}\right)\;\rho\;dx\;dy\\
=& \sum_{i_1,i_2=1}^{M_u}\sum_{j_1,j_2=1}^{N} u_z(x_{i_1},y_{j_1})v_z(x_{i_2},y_{j_2}) \int_{\Gamma^n}\int_D q^{M_q,N}\nabla\phi^u_{i_1}\cdot \nabla \phi^u_{i_2}\psi_{j_1}\psi_{j_2} \;\rho\;dx\;dy\\
=&\ (\mathbf u^z)^T S_{\rho,q}\mathbf v^z, 
\end{align*}
where $S_{\rho,q}$ is defined componentwise as 
\[
S_{\rho, q} := \left[\sum_{i=1}^{M_q}\sum_{j=1}^N q_z(x_i,y_j) \left(\int_{\Gamma^n} \psi_{j} \psi_{j_1} \psi_{j_2} \;\rho\;dy\right)\left(\int_D \phi^q_i \nabla \phi^u_{i_1}\cdot \nabla \phi^u_{i_2} \;dx\right)\right]_{\substack{i_1,i_2 = 1,...,M_u\\ j_1,j_2 = 1,...,N}}.
\]
Alternatively, 
\[
\langle q^{M_q,N} \nabla u^{M_u,N}, \nabla v^{M_u,N}\rangle = (\mathbf q^z)^T (S_{\rho,u})\mathbf v^z, 
\]
where
\[
S_{\rho,u} := \left[ \sum_{i=1}^{M_u} \sum_{j=1}^N u_z(x_i,y_j)\left(\int_{\Gamma^n} \psi_{j} \psi_{j_1} \psi_{j_2} \;\rho\;dy\right)\left(\int_D \phi^q_{i_1} \nabla \phi^u_{i}\cdot \nabla \phi^u_{i_2} \;dx\right)\right]_{\substack{i_1,i_2 = 1,...,M_u\\ j_1,j_2 = 1,...,N}}.
\]
In our numerical calculations, we approximate the sample paths of the equality constraint $e\in \honeofin(D)$ as solutions to the spatially discretized Poisson problems 
\begin{equation}\label{eqn: equality constraint}
\int_D \nabla e^{M_u}_j\cdot \nabla \phi_{i}^u \; dx = \int_D q^{M_q,N}(\cdot,y_j) \nabla u^{M_u,N}(\cdot,y_j) \cdot \nabla \phi_i^u\; dx - \int_D f  \phi_i^u \; dx,
\end{equation}
$i=1,...,M^u$, or equivalently 
\[
\mathbf e_j = \mathbf e_j(\mathbf q,\mathbf u) - \mathbf e_j(\mathbf f), 
\]
for each $j=1,...,N$, where 
\begin{align*}
\mathbf e_j(\mathbf q,\mathbf u) & = (A_x^u)^{-1} H^j(\mathbf q,\mathbf u),\ \ \ \mathbf e_j(\mathbf{f}) = (A_x^u)^{-1} \mathbf{f}, \qquad\text{and } \\
H^j(\mathbf q,\mathbf u)  &= \left[ \sum_{i_1=1}^{M^q}\sum_{i_2=1}^{M_u}q(x_{i_1},y_j)u(x_{i_2},y_j)\int_D \phi_{i_1}^q \nabla \phi_{i_2}^u\cdot \phi_{i}^u\;dx\right]_{i=1}^{M^u}.
\end{align*}
The vector $\mathbf e = [\mathbf e_1,...,\mathbf e_N]^T$ of sample paths $\mathbf e_j = [e^{M_u}(x_1,y_j),...,e^{M_u}(x_{M_u},y_j)]^T$ for $j=1,...,N$, can now be converted to the appropriate set of hierarchical surpluses $\mathbf e^z$ through a standard linear transformation. Note that the system solves required to evaluate $\mathbf e_j$ involve the same coefficient matrix, but with multiple right hand sides, the computational effort of which is small. 

\vspace{1em}

The discretized augmented Lagrangian now takes the form 
\begin{align*}
L_c(q^{M_q,N},u^{M_u,N},\lambda^{M_u,N}) = &\phantom{+}\frac{1}{2}(\mathbf{u}^z)^T (S_\rho \otimes A_x^u) \mathbf u^z + \frac{\beta}{2}(\mathbf q^z)^T (S_\rho^\mathrm{mix} )\mathbf q^z\\
& + (\boldsymbol \lambda^z)^T S_{\rho,q}  \mathbf u^z + \frac{c}{2}(\mathbf e^z)^T (S_\rho \otimes A_x^u) \mathbf e^z, 
\end{align*}
while the gradients \eqref{eqn:aug_lag_grad_q} and \eqref{eqn:aug_lag_grad_u} of $L_c$ with respect to $q$ and $u$ are given by 
\begin{align}
D_q[L_c(q^{M_q,N},u^{M_u,N},\lambda^{M_u,N})]  = & \phantom{+}\beta (S_\rho^{\mathrm{mix}}\otimes A_x^q) \mathbf q^z + c S_{\rho,u}\mathbf e^z(\mathbf{q},\mathbf{u})\nonumber\\
& + S_{\rho,u}\boldsymbol \lambda^z - c S_{\rho,u} \mathbf e^z(\mathbf f)\label{eqn:aug_lag_aux_discrete_q} 
\end{align}
and 
\begin{align}
D_u[L_c(q^{M_q,N},u^{M_u,N},\lambda^{M_u,N})]  = & \phantom{+} (S_\rho\otimes A_x^u) \mathbf u^z + c S_{\rho,q}\mathbf e^z(\mathbf{q},\mathbf{u})\nonumber \\
& + S_{\rho,q}\boldsymbol \lambda^z - c S_{\rho,q} \mathbf e^z(\mathbf f) \label{eqn:aug_lag_aux_discrete_u}
\end{align}
respectively. The auxiliary problems \eqref{eqn:aug_lag_aux_q} and \eqref{eqn:aug_lag_aux_u} whose solutions yield updates for the parameter $q$ as well as the state $u$, can therefore be discretized in the form of two linear systems of size $M^q N$ and $M^u N$ respectively. These systems are where the bulk of the computational effort is spent. In our numerical computations, we employ the preconditioned conjugate gradient method.

\section{Numerical Results}\label{section: numerical examples}

In this section, we discuss three numerical examples to illustrate the use of the augmented Lagrangian method to estimate the statistical distribution of a spatially varying diffusion parameter $q$ from the measured output $\uhat$. In each case, we compute sample paths of $\uhat$ by solving \eqref{eqn: model} using sample paths of the exact parameter $q$ and a deterministic forcing term $f$, and perturbing the result slightly to account for measurement variability. We use a hierarchical basis of piecewise linear hat functions of the same order $L$ to interpolate $q,u,\lambda$ and $\uhat$. For the first two examples, the random variables that define the uncertain parameter are also used to express the model output and we construct the stochastic interpolant of $\uhat$ directly from that of $q$ by generating its sample paths at the appropriate sparse grid nodes. For the third example, we first compute a truncated KL expansion of $\uhat$, based on a randomly generated sample, and estimate the joint density of the pertinent random variables from which we then compute an interpolant. Throughout, we use the augmented Lagrangian with parallel splitting to effect the minimization. For the sake of regularization, we use a spatial discretization of $\uhat$ that is twice as fine as that of $q$ throughout. To assess the accuracy of our approximation, we compare the first few central moments of $	q$ with those of its approximation $\hat q$. In these examples, we did not enforce positivity of the constraint explicitly. 

\begin{example}\label{ex:1}
The first example serves to demonstrate the augmented Lagrangian method for a problem in 1 spatial- and 4 stochastic dimensions. The exact parameter $q$ and deterministic forcing term $f$ are defined over the domain $[0,1]$ by 
\begin{align*}
q(x,y) &= 2 + x^2 + \frac{1}{2}\sum_{i=1}^4 \cos(i\pi x) Y_i(\omega),\ \  Y_i(\omega) \sim \mathrm{i.i.d\ Uniform}([0,1]), \ \ \text{and}\\
f(x,y) &= 6x^2 - 2x + 4
\end{align*}
respectively. The manufactured solution $\uhat$ is perturbed by uniform random noise of relative size $\delta = 0.001$. We use $30$ elements for $q$ and $60$ for $u,\uhat$, and $\lambda$, a regularization term $\beta =$ 5e-5, an initial guess $q_0 = 1$, and terminate the program when the norm of the difference of successive iterates is within the tolerance 1e-5. Both sub-problems \eqref{eqn:aug_lag_aux_discrete_q} and \eqref{eqn:aug_lag_aux_discrete_u} are solved using a conjugate gradient routine with a relative residual tolerance of 1e-5. For this example, it is possible to plot and compare the sample paths of $q$ and $\hat q$ at the collocation points. Figure \ref{fig:ex1_sample_paths} shows that qualitatively, they indeed look similar. In Figure \ref{fig:ex1_moments}, we compare the first 4 central moments of $q$ and $\hat q$, which confirms that we are able to identify the statistical behavior of $q$ with a high accuracy (well within the magnitude of the noise added to the data). Table \ref{table:ex1_convergence} summarizes the convergence behavior of the algorithm.

\begin{figure}[ht]
	\centering
	\begin{subfigure}[t]{0.45\textwidth}
		\includegraphics[width = \textwidth]{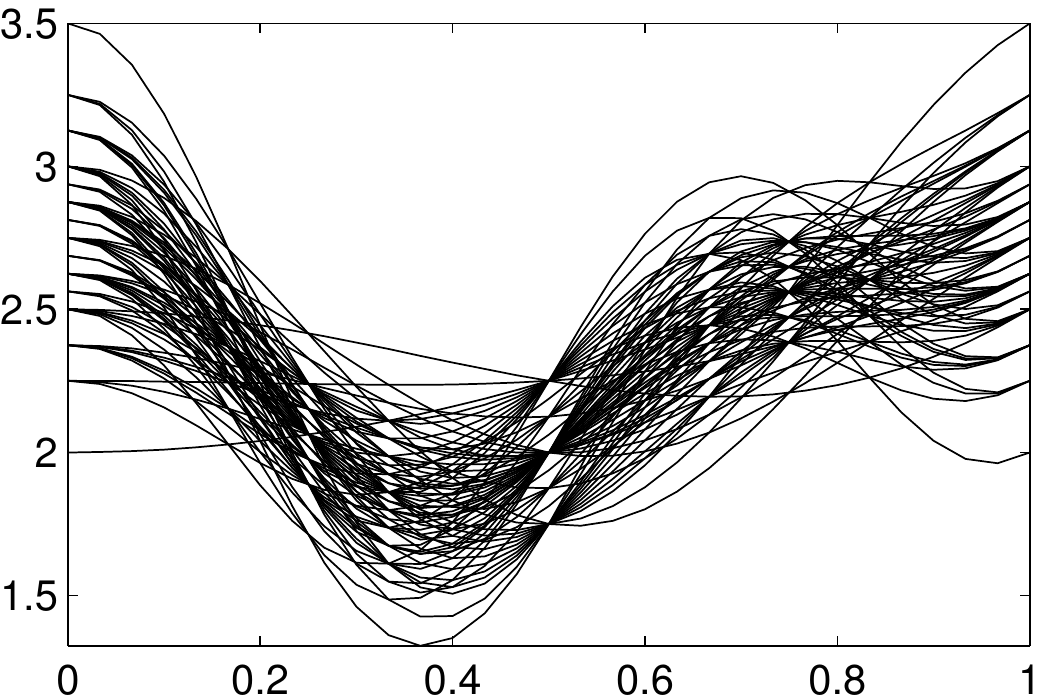}
		\caption{Exact parameter}
	\end{subfigure}%
	~
	\begin{subfigure}[t]{0.45\textwidth}
		\includegraphics[width=\textwidth]{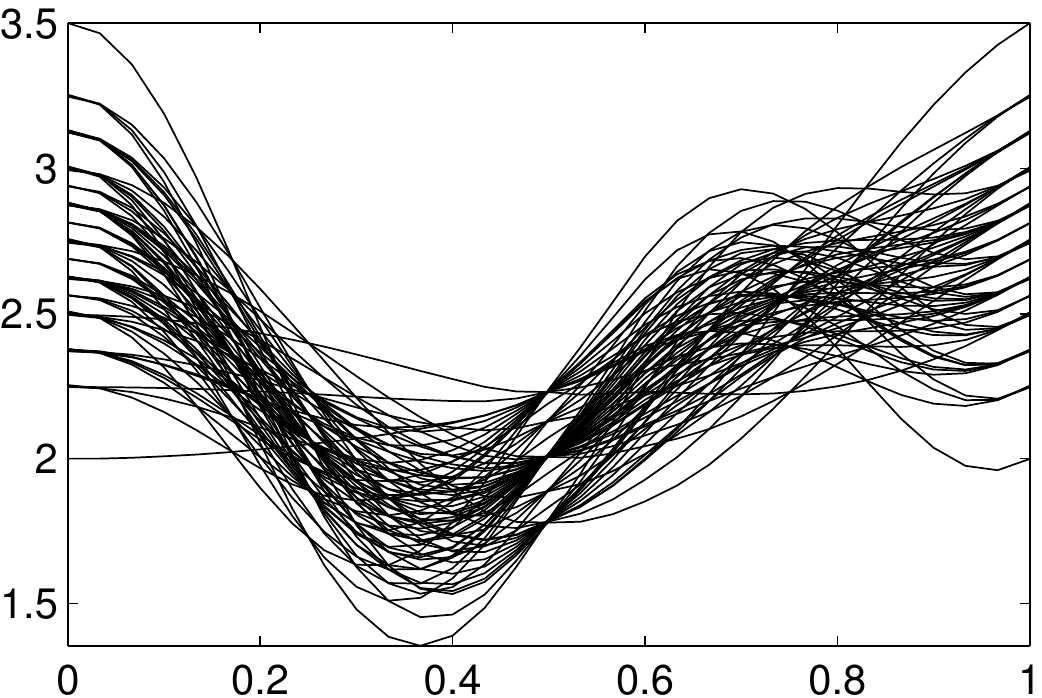}
		\caption{Identified parameter}
	\end{subfigure}
	\caption{Sample paths of the exact- and identified parameter.}
	\label{fig:ex1_sample_paths}
\end{figure}

\begin{figure}[ht]
\centering
\begin{subfigure}[t]{0.24\textwidth}
	\includegraphics[width=\textwidth]{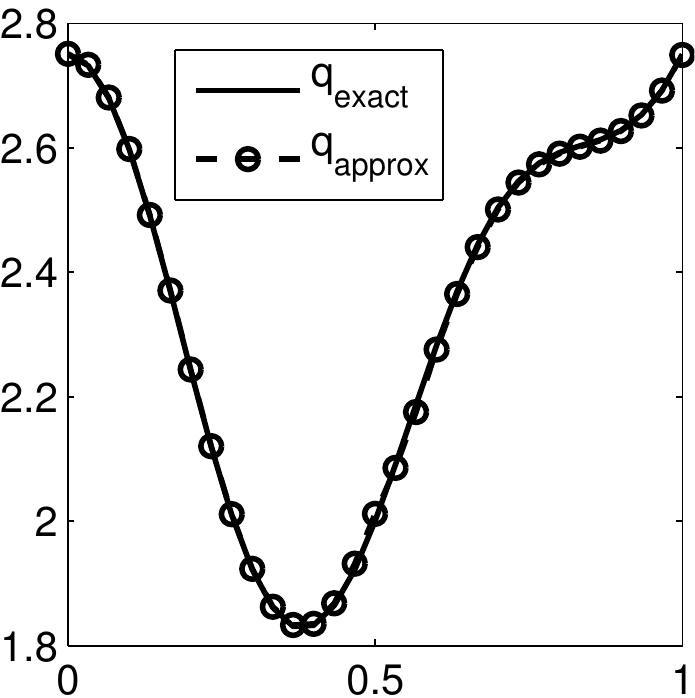}
	\caption{$\mu = \Exp[q(x)]$}
\end{subfigure}%
\begin{subfigure}[t]{0.24\textwidth}
	\includegraphics[width=\textwidth]{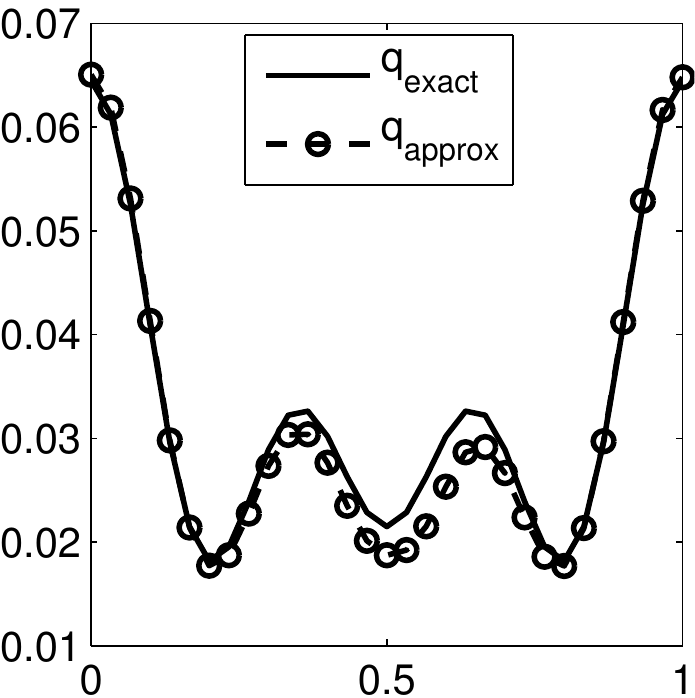}
	\caption{$\Exp\left[(q(x)-\mu(x))^2\right]$}
\end{subfigure}%
\begin{subfigure}[t]{0.24\textwidth}
	\includegraphics[width=\textwidth]{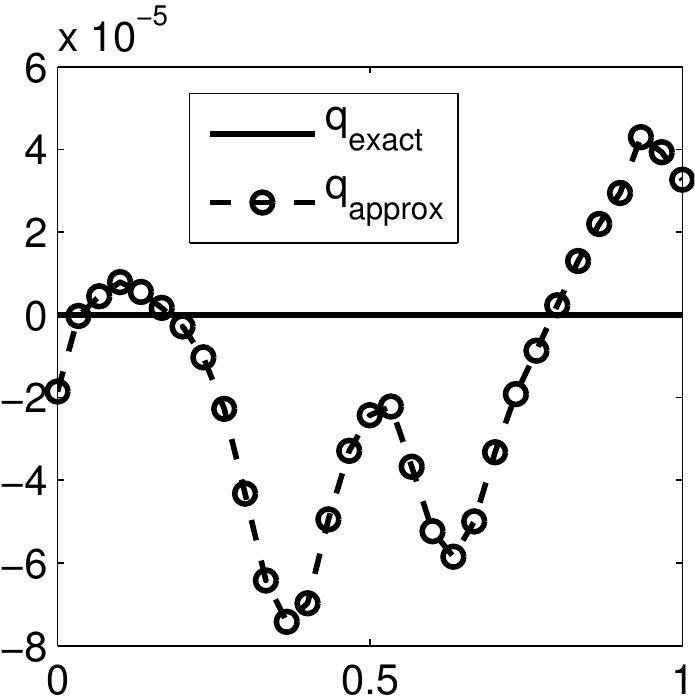}
	\caption{$\Exp\left[(q(x)-\mu(x))^3\right]$}
\end{subfigure}%
\begin{subfigure}[t]{0.24\textwidth}
	\includegraphics[width=\textwidth]{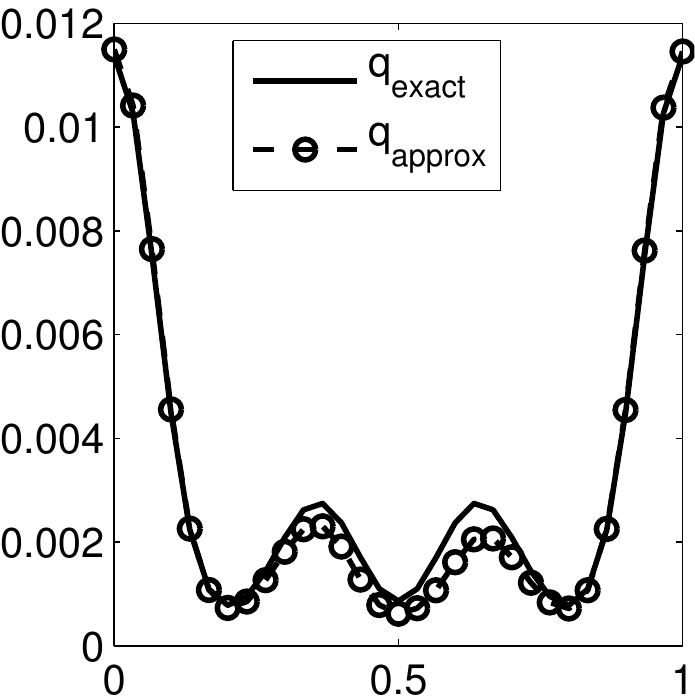}
	\caption{$\Exp\left[(q(x)-\mu(x))^4\right]$}
\end{subfigure}%
\caption{The first 4 central moments of $q$ and of its approximation $\hat{q}$.}
\label{fig:ex1_moments}
\end{figure}

\begin{table}[ht]
\centering
\begin{tabular}{|c|c|c|c|c|c|}
\hline
Step & \multicolumn{2}{|c|}{PCG Iterations} & $L^2$ error & Increments & Cost Functional\\
\hline
& \eqref{eqn:aug_lag_aux_q} & \eqref{eqn:aug_lag_aux_u} & $\|q-\hat q\|_{L^2}$ & $\|\hat{q}_{k}-\hat{q}_{k-1}\|_{L^2}$ & $J(q_k,u_k,\lambda_k)$\\
\hline 
1 & 1737 & 1246 & 1.9039 & -  &  1.7764e-20 \\
2 & 86 & 328 & 6.7864e-05 & 1.9019 & 5.4329e-05\\
3 & 25 & 118 &  9.2998e-05 & 2.7416e-06 & 5.3453e-05\\
\hline
\end{tabular}
\caption{Computational work and convergence diagnostics for for Example \ref{ex:1}.}
\label{table:ex1_convergence}
\end{table}
	
\end{example}

\begin{example} \label{ex:2}
As for deterministic inverse problems, the parameter $q$ may not be identifiable in certain spatial regions, due to the shape of the output for instance (see \cite{Kunisch1987}). This example investigates the role of regularization in this context. We chose a random output $\uhat$, most of whose sample paths have a zero gradient over a large area. Specifically, the deterministic forcing term $f$ is given by
\[
f(x_1,x_2) = - \nabla\cdot ( k(x_1,x_2)\nabla(w(x_1)w(x_2)) ),
\]
where 
\[
w(x) = \left\{
\begin{array}{ll}
9x^2 + 6x, & x \in [0,1/3]\\
1, & x\in (1/3,2/3) \\ 
-9x^2 + 12x - 3. & x\in [2/3,1]
\end{array}\right. ,
\]
and 
\[
k(x_1,x_2) = 2 + \sin(x_1^2x_2).
\]
The exact parameter $q$ is given by
\[
q(x_1,x_2,Y_1,Y_2,Y_3) = 2 + \sin(x_1^2x_2) + \frac{1}{8}\sum_{i=1}^3 \sin(i\pi x_1)\sin(i \pi x_2) ) Y_i, 
\]
where $Y_i \sim  \mathrm{i.i.d. Uniform}([-1,1])$, $i= 1,2,3$. We computed its approximation $\hat q$ on a uniform triangular mesh of 392 elements over the unit square, added the same level of noise $\delta$ as before, and interpolated in the stochastic component at level $L=4$. Figure \ref{fig:ex2_uhat} shows a typical sample path of $\uhat$. The problem was first solved using a regularization parameter $\beta =$1e-5, then again using $\beta = 1$e-3. In both cases the convergence tolerance was set to 1e-4 and the conjugate gradient tolerance was 1e-5. 

\begin{figure}[ht]
\centering
\begin{subfigure}[t]{0.23\textwidth}
\includegraphics[width = \textwidth]{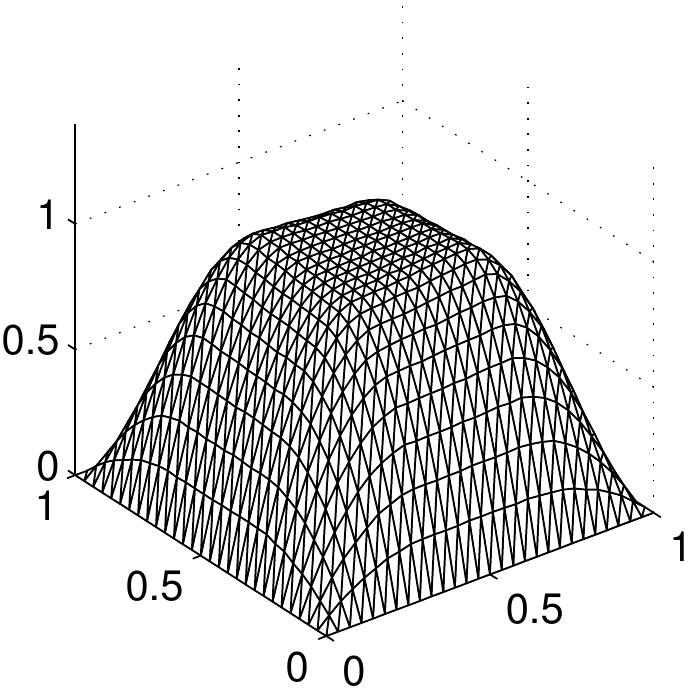}
\caption{A sample path of $\uhat$.}\label{fig:ex2_uhat}
\end{subfigure}%
\begin{subfigure}[t]{0.23\textwidth}
\includegraphics[width = \textwidth]{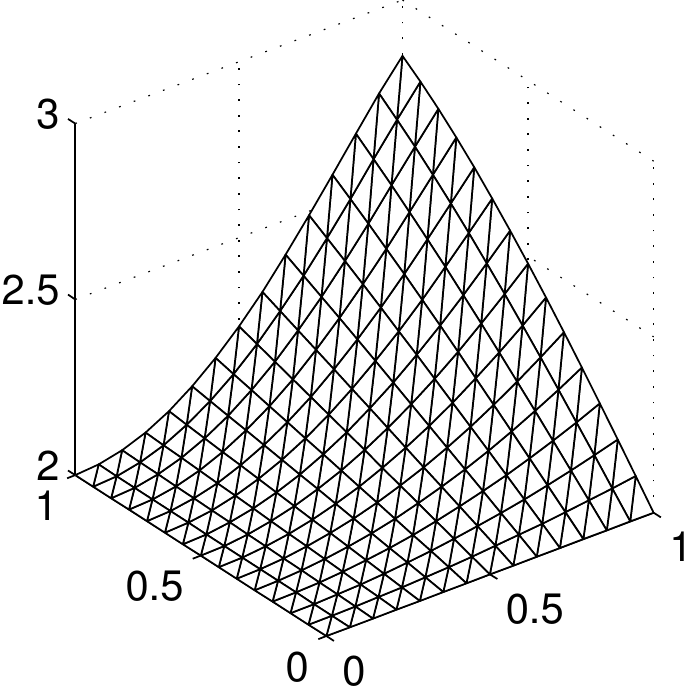}
\caption{$\Exp[q(x)]$}\label{fig:ex2_qmom_01_exact}
\end{subfigure}%
\begin{subfigure}[t]{0.23\textwidth}
	\includegraphics[width = \textwidth]{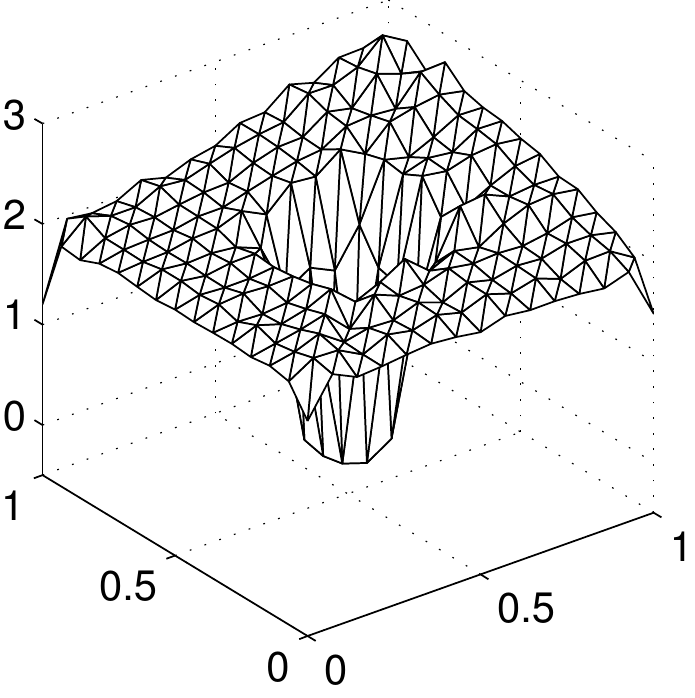}
\caption{$\Exp\left[\hat q(x)\right], \beta =$ 1e-5}
\label{fig:ex2_qmom_01_approx_noreg}
\end{subfigure}%
\begin{subfigure}[t]{0.23\textwidth}
	\includegraphics[width =\textwidth]{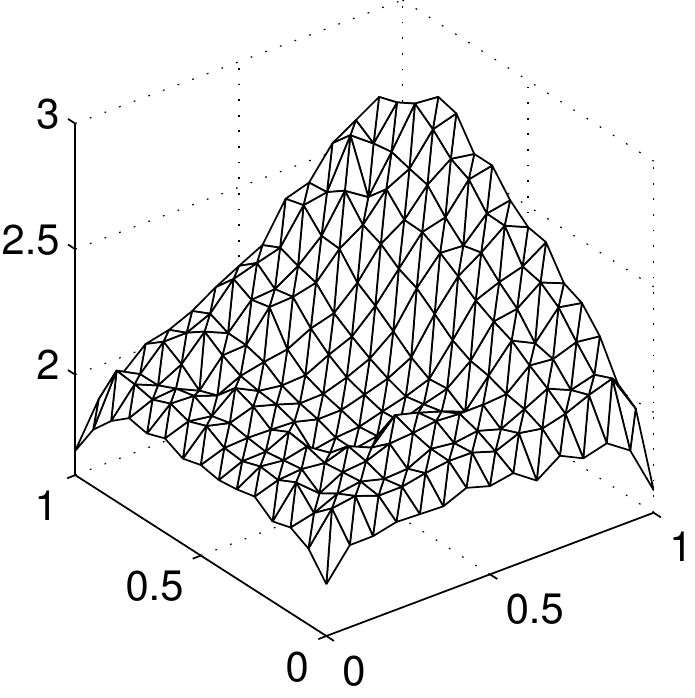}
\caption{$\Exp\left[\hat q(x)\right], \beta =$ 1e-3}
\label{fig:ex2_qmom_01_approx_reg}
\end{subfigure}
\label{fig:ex2_flat_uhat}
\end{figure}

Figures \ref{fig:ex2_qmom_01_exact}, \ref{fig:ex2_qmom_01_approx_noreg}, and \ref{fig:ex2_qmom_01_approx_reg} show the mean of $q$ and of $\hat q$ in each of these cases. Using a larger regularization parameter penalizes steep gradients, thereby improving the conditioning of the inverse problem, albeit at the cost of accuracy. Evidently, regularization continues to play a significant role in the estimation of uncertain parameters. Similar figures can be plotted for the higher order moments. Quantitative outputs of the algorithm are provided in Table \ref{table:ex2_convergence}.

\begin{table}[ht]
\centering
\begin{tabular}{|c|c|c|c|c|}
\hline
Step & $L^2$ error & Increments & Cost Functional & AL Functional\\
\hline
& $\|q-\hat q\|_{L^2}$ & $\|\hat{q}_{k}-\hat{q}_{k-1}\|_{L^2}$ & $J(q_k,u_k,\lambda_k)$ & $L(q_k,u_k,\lambda_k)$\\
\hline 
0 & 1.4083 & -   &  -2.1871e-17     & 0.0463     \\
1 & 0.0225 & 1.2559     & 0.0102 & 0.0130\\
2 & 0.0054 & 9.2e-3 & 0.0115  & 0.0118\\
3 & 0.0043 & 4.6058e-04  & 0.0117 &	0.0117		\\
4 & 0.0043 & 5.2789e-05 & 0.0116 &0.0116\\
\hline
\end{tabular}
\caption{Convergence table for Algorithm \ref{alg: augmented lagrangian method with splitting} applied to Example \ref{ex:2} with $\beta=$1e-3.}
\label{table:ex2_convergence}
\end{table}
\end{example}

\begin{example}\label{ex3}

For this example, the random variables used to express the identified parameter are estimated from sample paths of the model output $\uhat$. The deterministic forcing term satisfies
\[
f(x_1,x_2) = - \nabla\cdot ( (4 + x_1x_2)\nabla \sin(\pi x_1)\sin(\pi x_2) ),
\]
while
\begin{align*}
q(x_1,x_2,y_1,y_2,y_3) = &\ 4 + x_1 x_2 + 0.5\sin(\pi x_1)\sin(\pi x_2)Y_1 \\
& + 0.25\cos(0.5\pi x_1)\sin(0.5\pi x_2)Y_2 + 0.25\cos(\pi x_1)\cos(\pi x_2)Y_3,
\end{align*}
where $Y_i \sim \mathrm{i.i.d. Uniform([-1,1])}$. Using random samples of these input parameters, we generated 1000 sample paths of $\uhat$, which we then decomposed according to the method outlined in Section \ref{section: discretization}. No additional noise was added to the sample paths. For this problem, 2 KL expansion terms suffice to represent the sample $\uhat$ so that the remaining expansion terms contribute less than tol=1e-7 to the field's variance. We express each random variable $Y_i,i=1,2$ as the inverse image of a uniform random variable under its empirical cumulative distribution function (cdf). The appropriate graphs are shown in Figure \ref{fig:ex3_kl_trunc}. 

\begin{figure}[ht]
\centering
\begin{subfigure}[t]{0.23\textwidth}
\includegraphics[width=\textwidth]{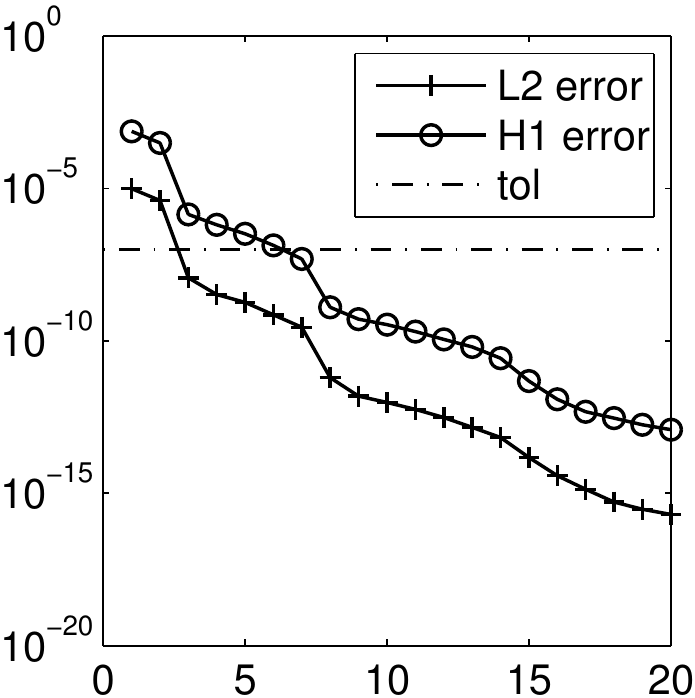}
\caption{KL truncation}
\end{subfigure}%
\begin{subfigure}[t]{0.23\textwidth}
\includegraphics[width=\textwidth]{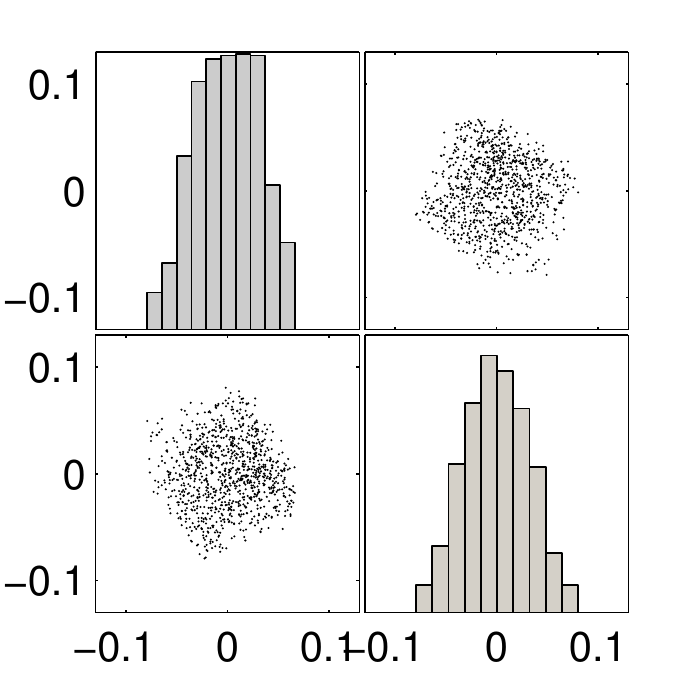}
\caption{Scatterplot$(Y_1,Y_2)$}
\end{subfigure}%
\begin{subfigure}[t]{0.23\textwidth}
\includegraphics[width=\textwidth]{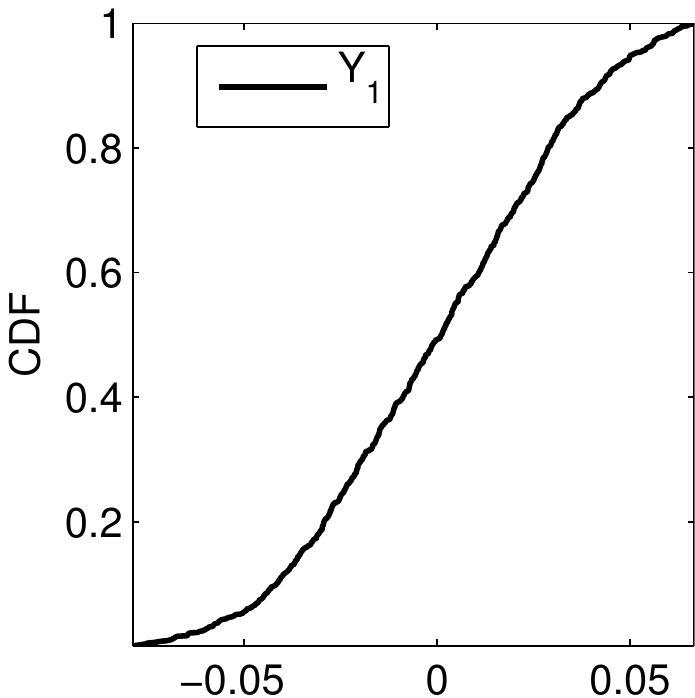}
\caption{CDF $Y_1$}
\end{subfigure}%
\begin{subfigure}[t]{0.23\textwidth}
\includegraphics[width=\textwidth]{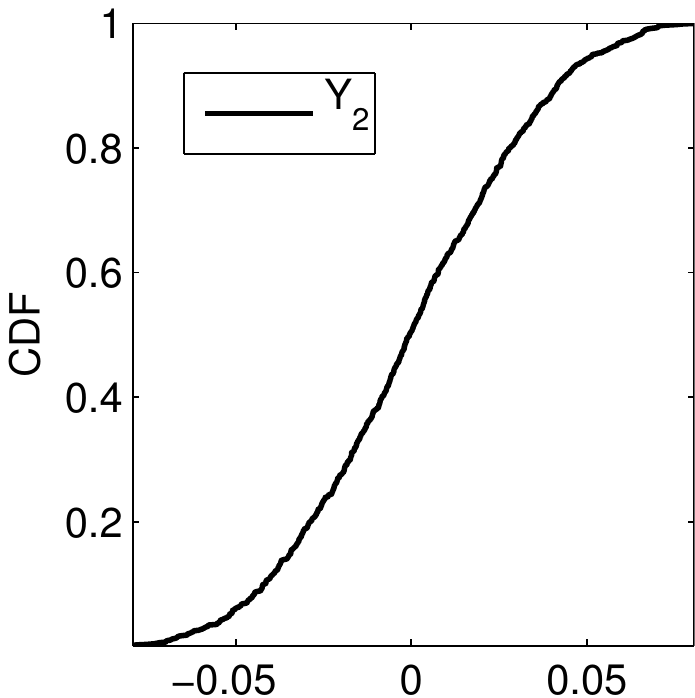}
\caption{CDF $Y_2$}
\end{subfigure}
\caption{Sparse grid interpolation of $\uhat$ based on a random sample of 1000 paths.}
\label{fig:ex3_kl_trunc}
\end{figure}

As in Example \ref{ex:2}, we discretize $q$ using a uniform spatial mesh of 392 elements. In addition, we choose a sparse grid interpolation level $L=4$. We use a regularization term $\beta=$1e-5, and terminate the optimization algorithm when the $L^2$ norm of successive iterates is within the tolerance level of 1e-5. For the conjugate gradient subroutines, we use a tolerance of 1e-6. As before, we compare the central moments of the identified parameter $\hat q$ with those of its exact counterpart $q$ to assess  its accuracy. Figure \ref{fig:ex3_moments} shows that, qualitatively, the estimate is good. Since the random variables used to express $\hat q$ differ from $Y_1$ and $Y_2$, it is impossible to compute the exact error as part of the optimization run. We nevertheless record relevant convergence diagnostics in Table \ref{table:ex3_convergence}. 

\begin{figure}[ht]
\centering
\begin{subfigure}[t]{0.23\textwidth}
\includegraphics[width=\textwidth]{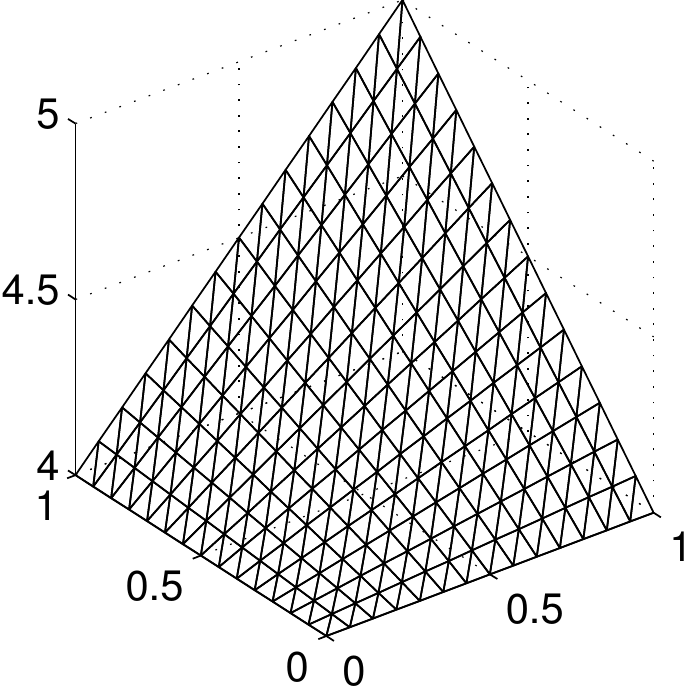}
\end{subfigure}%
\begin{subfigure}[t]{0.23\textwidth}
\includegraphics[width=\textwidth]{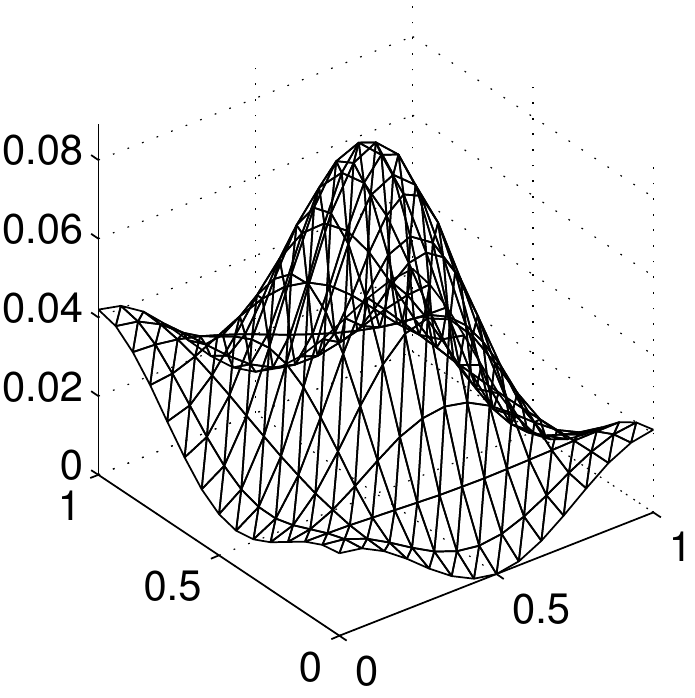}
\end{subfigure}%
\begin{subfigure}[t]{0.23\textwidth}
\includegraphics[width=\textwidth]{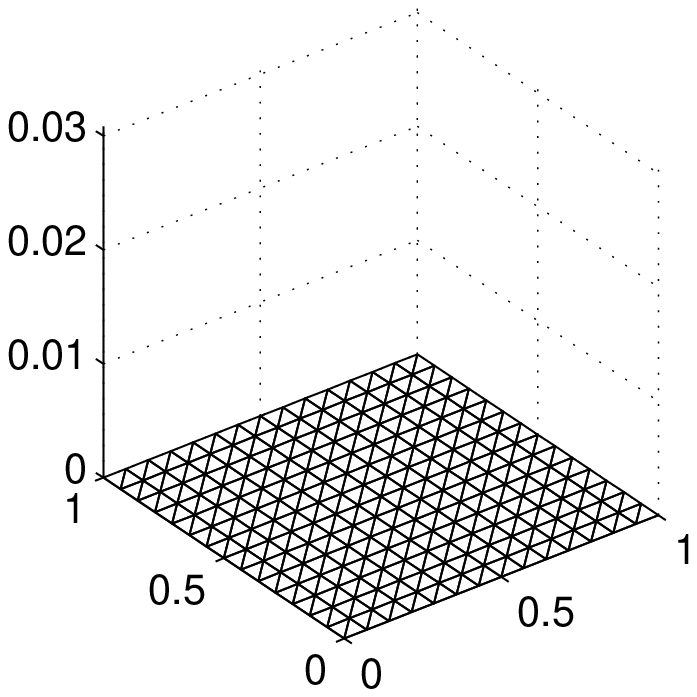}
\end{subfigure}%
\begin{subfigure}[t]{0.23\textwidth}
\includegraphics[width=\textwidth]{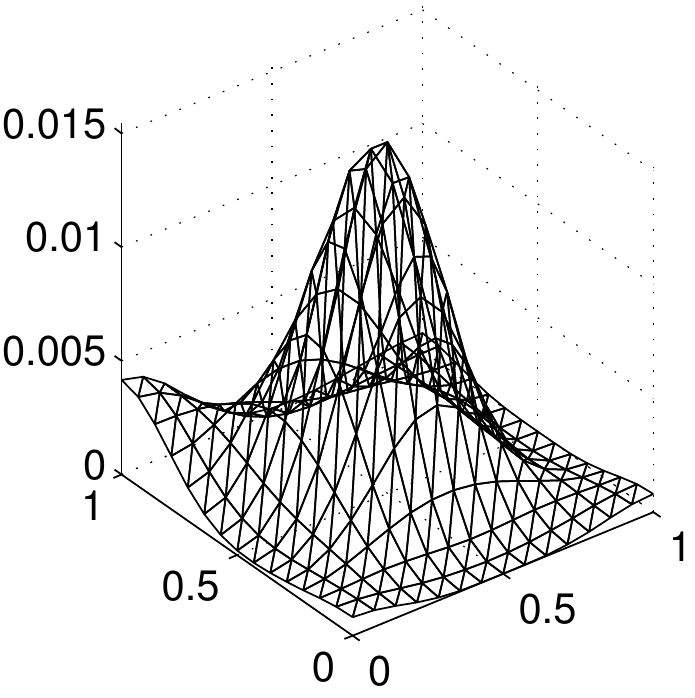}
\end{subfigure}\\
\begin{subfigure}[t]{0.23\textwidth}
\includegraphics[width=\textwidth]{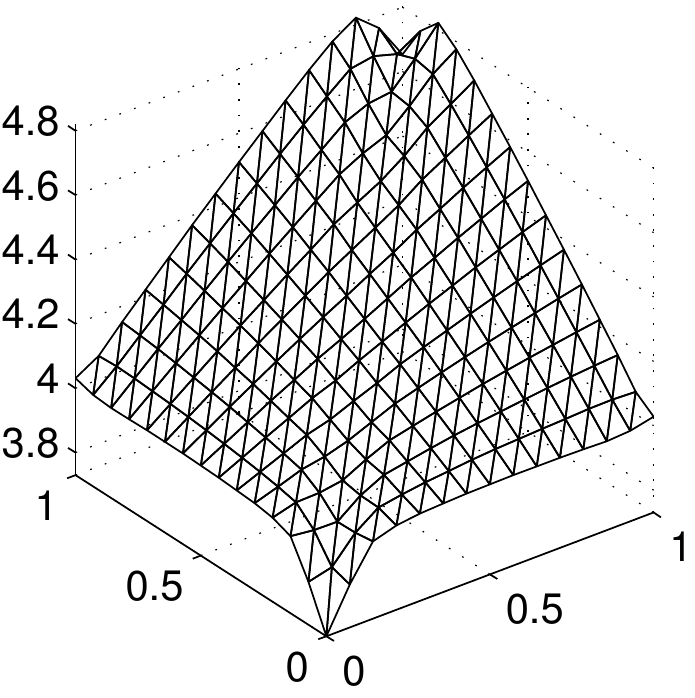}
\caption{$\mu = \Exp[q(x)]$}
\end{subfigure}%
\begin{subfigure}[t]{0.23\textwidth}
\includegraphics[width=\textwidth]{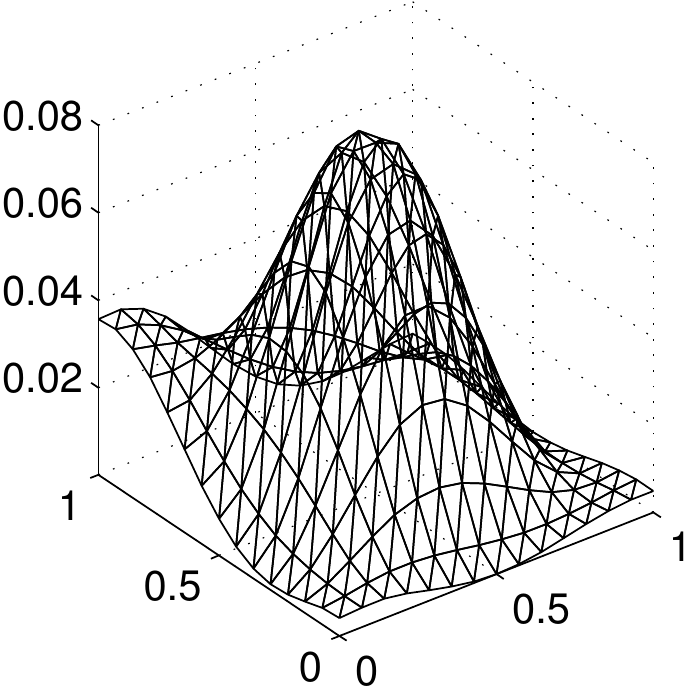}
\caption{$\Exp\left[(q(x)-\mu(x))^2\right]$}
\end{subfigure}%
\begin{subfigure}[t]{0.23\textwidth}
\includegraphics[width=\textwidth]{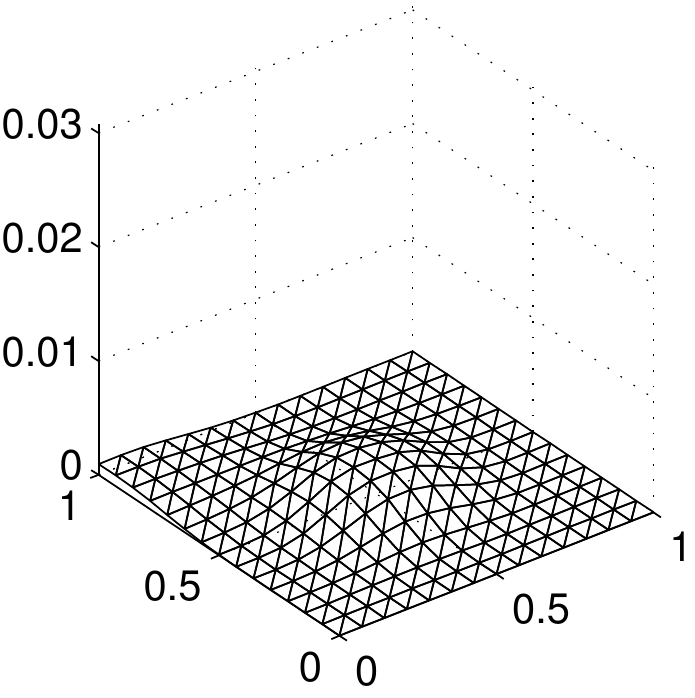}
\caption{$\Exp\left[(q(x)-\mu(x))^3\right]$}
\end{subfigure}%
\begin{subfigure}[t]{0.23\textwidth}
\includegraphics[width=\textwidth]{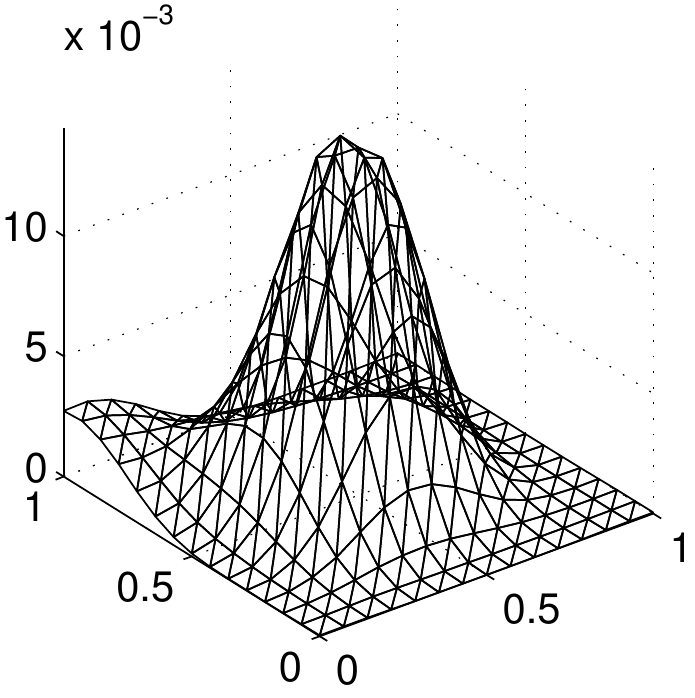}
\caption{$\Exp\left[(q(x)-\mu(x))^4\right]$}
\end{subfigure}
\caption{The first 4 central moments of the exact parameter $q$ (top row) and those of the identified parameter $\hat q$ (bottom row).}
\label{fig:ex3_moments}
\end{figure}

\begin{table}[ht]
\centering
\begin{tabular}{|c|c|c|c|}
\hline
Step & Increments & Cost Functional & AL Functional\\
\hline
& $\|\hat{q}_{k}-\hat{q}_{k-1}\|_{L^2}$ & $J(q_k,u_k,\lambda_k)$ & $L(q_k,u_k,\lambda_k)$\\
\hline 
0 & - & -   &  -        \\
1 &    10.6826 & 6.1409e-05 & 6.1461e-05\\
2 & 3.7636e-05 & 5.3654e-05 & 5.3680e-05\\
3 & 1.2276e-05 & 5.0058e-05 & 5.0075e-05\\
4 & 4.3860e-06 & 4.8018e-05 & 4.8029e-05\\
\hline
\end{tabular}
\caption{Convergence table for Algorithm \ref{alg: augmented lagrangian method with splitting} applied to Example \ref{ex3}.}
\label{table:ex3_convergence}
\end{table}

\end{example}

\section{Conclusion}

In this paper we have formulated a fairly general variational framework for the estimation of spatially distributed, uncertain diffusion coefficients in stationary elliptic problems, based on statistical measurements of the model output. In contrast to the Bayesian approach, we used a parametrization of the coefficient in terms of a finite number of variables, allowing us to not only estimate the statistical mismatch between the predicted- and observed output, but also to determine the perturbations of $q$ that will result in a decrease in the degree of mismatch. In light of the potential size in the number of degrees of freedom, the computation of quantities such as steepest descent directions, or cost functional evaluations may require considerable computational cost. We are currently investigating ways to reduce the computational overhead, through parallelization \cite{vanwyk2014}, multigrid methods, or the use of sensitivity information \cite{Borggaard2013}. 
\bibliographystyle{siam}
\bibliography{bibliography_uncertainparid}

\end{document}